\documentclass{amsart}
\usepackage{amssymb, latexsym, amsthm, enumitem, color, amsmath, tikz-cd, mathtools, bbm, mathrsfs}

\usepackage[unicode=true,pdfusetitle,bookmarks=true,bookmarksnumbered=false,
bookmarksopen=false,breaklinks=true,pdfborder={0 0 0},
pdfborderstyle={},backref=false,colorlinks=true]{hyperref}
\definecolor{myblue}{rgb}{0.09,0.32,0.44} %
\hypersetup{pdfborder={0 0 0},pdfborderstyle={},colorlinks=true,linkcolor=myblue,citecolor=myblue,urlcolor=blue}

\newtheorem{thm}{Theorem}[section]
\newtheorem*{thm*}{Theorem}

\newtheorem{assum}[thm]{Assumption}

\newtheorem{cor}[thm]{Corollary}
\newtheorem{defn}[thm]{Definition}

\newtheorem{fact}[thm]{Fact}
\newtheorem{lem}[thm]{Lemma}
\newtheorem{prop}[thm]{Proposition}

\theoremstyle{remark}
\newtheorem{rem}[thm]{Remark}

\newtheorem*{rem*}{Remark}
\newtheorem*{rems*}{Remarks}

\newcommand\Cref[1]{{Corollary~\ref{#1}}}

\newcommand\Lref[1]{{Lemma~\ref{#1}}}
\newcommand\Pref[1]{{Proposition~\ref{#1}}}
\newcommand\Rref[1]{{Remark~\ref{#1}}}
\newcommand\Tref[1]{{Theorem~\ref{#1}}}
\newcommand\Sref[1]{{\S \ref{#1}}}

\newcommand{\Z}{\mathbb{Z}}
\newcommand{\Q}{\mathbb{Q}}
\newcommand{\R}{\mathbb{R}}
\newcommand{\C}{\mathbb{C}}
\newcommand{\FF}{\mathbb{F}}
\newcommand{\FFF}{\mathbb{F}^{\circ}}
\newcommand{\PP}{\mathcal{P}}
\renewcommand{\AA}{\mathcal{A}}
\newcommand{\BB}{\mathcal{B}}
\newcommand{\DD}{\mathcal{D}}
\newcommand{\set}[1]{\left\{#1\right\}}
\newcommand{\sub}{\subseteq}
\renewcommand{\Pr}{\mathbb{P}}

\newcommand{\E}{\mathbb{E}}
\newcommand{\eps}{\varepsilon}
\newcommand{\floor}[1]{\left\lfloor #1 \right\rfloor}
\newcommand{\ceil}[1]{\left\lceil #1 \right\rceil}
\newcommand{\suchthat}{\,\ifnum\currentgrouptype=16\middle\fi|\,}

\newcommand{\supp}{\operatorname{supp}}
\newcommand{\Gal}{\operatorname{Gal}}
\newcommand{\Alt}{\operatorname{Alt}}
\newcommand{\Sym}{\operatorname{Sym}}
\newcommand{\disc}{\operatorname{disc}}
\newcommand{\tr}{\operatorname{tr}}
\newcommand{\norm}[1]{\| #1 \|}

\newcommand{\ind}[1]{\mathbbm{1}_{#1}}
\newcommand{\Res}{\operatorname{Res}}
\renewcommand{\Re}{\operatorname{Re}}

\newcommand{\ord}{\operatorname{ord}}
\newcommand{\Var}{\operatorname{Var}}
\newcommand{\Cov}{\operatorname{Cov}}

\newcommand{\ad}{\mathrm{ad}}
\newcommand{\nad}{\mathrm{nad}}
\newcommand{\Ckon}{C_0}
\newcommand{\Cord}{C_1}
\newcommand{\ksm}{k_{\mathrm{sm}}}

\newcommand{\dfi}[2]{\left(\genfrac{}{}{0pt}{}{#1}{#2}\right|}
\newcommand{\df}[2]{\mathchoice{{\textstyle \dfi{#1}{#2}}}
  {\dfi{#1}{#2}}{\dfi{#1}{#2}}{\dfi{#1}{#2}}}
\newcommand{\whi}[2]{\widehat{#1}#2}
\newcommand{\whnu}[1]{\whi{\nu}{#1}}
\newcommand{\whrho}[1]{\whi{\rho}{#1}}
\newcommand{\spmod}[1]{\;(\mathrm{mod}\;#1)}

\makeatletter
\def\moverlay{\mathpalette\mov@rlay}
\def\mov@rlay#1#2{\leavevmode\vtop{%
   \baselineskip\z@skip \lineskiplimit-\maxdimen
   \ialign{\hfil$\m@th#1##$\hfil\cr#2\crcr}}}
\newcommand{\charfusion}[3][\mathord]{
    #1{\ifx#1\mathop\vphantom{#2}\fi
        \mathpalette\mov@rlay{#2\cr#3}
      }
    \ifx#1\mathop\expandafter\displaylimits\fi}
\makeatother

\newcommand{\bigcupdot}{\charfusion[\mathop]{\bigcup}{\mbox{\larger[2]$\cdot$}}}

\title{Galois groups of random polynomials of large degree}

\author{Guy Blachar}
\address{Universit\'{e} Paris-Dauphine -- CEREMADE, Place de Lattre de Tassigny, Paris, France}
\email{guy.blachar@gmail.com}
\author{Emmanuel Breuillard}
\address{Mathematical Institute, Oxford OX1 3LB, United Kingdom}
\email{emmanuel.breuillard@maths.ox.ac.uk}
\author{Gady Kozma}
\address{The Weizmann Institute of Science, Rehovot, Israel}
\email{gady.kozma@weizmann.ac.il}

\begin{document}

\begin{abstract}
    We study random polynomials of the form $R(x)=x^n+\omega_{n-1}x^{n-1}+\cdots+\omega_0$, where $\omega_0,\dots,\omega_{n-1}$ are independent, uniformly bounded integer-valued random variables, and $\omega_1,\dots,\omega_{n-1}$ have a fixed common law $\mu$. %

    We prove (unconditionally) that, if the R\'{e}nyi entropy of order $2$ satisfies $H_2(\mu)=-\log\norm{\mu}_2^2>12$, then $\Pr(\disc(R)\text{ is a square})=O_\mu(1/\log n)$. Combined with previous results, this shows that, for such measures $\mu$ and under additional hypotheses, $R$ has full Galois group $\Sym(n)$ with high probability, when conditioned on $\omega_0\ne 0$.
\end{abstract}

\maketitle

\section{Introduction}

A classical problem is to determine the Galois group of a random polynomial with integer coefficients. In many natural models, one expects the polynomial to be irreducible and its Galois group to be the full symmetric group with probability tending to one.

In this work, we study the \emph{small box model}. We fix a finitely supported non-degenerate probability measure $\mu$ on $\Z$, and let
\[
R(x) = x^n + \omega_{n-1}x^{n-1} + \cdots + \omega_0 \in \Z[x]
\]
be our random polynomial, where $\omega_0,\dots,\omega_{n-1}$ are independent, and $\omega_1,\dots,\omega_{n-1}$ have common law $\mu$. We allow $\omega_0$ to have a different law, in particular to allow conditioning on $\omega_0\ne 0$. We are interested in the arithmetic behaviour of the polynomial when its degree $n$ tends to infinity.

This model was previously studied in several works. It was established in a work of Breuillard and Varj\'{u} \cite{BreuillardVarju19} under the generalized Riemann hypothesis for Dedekind zeta functions of number fields, and in %
\cite{BarySorokerKozma20,BarySorokerKoukoulopoulosKozma23} unconditionally, that in many settings, such polynomials are irreducible with high probability, and their Galois group contains the alternating group $\Alt(n)$ with high probability.

The remaining question in all these works is whether one can distinguish $\Alt(n)$ from~$\Sym(n)$. To answer that, we recall that when $R$ is separable, its Galois group is contained in $\Alt(n)$ if and only if its discriminant is a square in $\Z$. We answer this unconditionally on the generalized Riemann hypothesis, when $\mu$ has large enough entropy:

\begin{thm}\label{thm:main}
Assume that $H_2(\mu)>12$, and let $H\ge 1$ such that $|\omega_j|\le H$ for all~$j$ almost surely. Then there are constants $C=C(\mu)$ and $n_0=n_0(\mu,H)$ such that
\[
  \Pr(\disc(R)\text{ is a square}) \le \frac{C(\mu)}{\log n}
\]
for all $n\ge n_0$.
\end{thm}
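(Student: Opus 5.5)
The plan is to detect squareness of $\disc(R)$ through quadratic characters modulo many primes and to show these characters are nearly independent and equidistributed. If $D=\disc(R)$ is a nonzero square, then $\left(\frac{D}{p}\right)\in\{0,1\}$ for every prime $p$. Fix a set $\PP$ of primes $p\in[P,2P]$, with $P$ a small power of $\log n$ or a suitable function of $n$ to be optimized. Consider
\[
S=\sum_{p\in\PP}\left(\tfrac{D}{p}\right).
\]
On the event that $D$ is a nonzero square, $S=|\PP|-\#\{p\in\PP: p\mid D\}$. We bound separately $\Pr(p\mid D)$ and $\Pr(D=0)$; the latter is at most the probability that $R$ has a repeated factor, which earlier results show is $o(1/\log n)$, or which we bound directly. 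Chebyshev's inequality then gives
\[
\Pr(D\text{ square}) \le \Pr\bigl(|S-\E S|\ge |\PP|/2\bigr)+\text{small} \le \frac{4\Var(S)}{|\PP|^2}+\text{small},
\]
provided $|\E S|\le |\PP|/4$.

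So the task reduces to estimating the first and second moments $\E\left(\frac{D}{p}\right)$ and $\E\left(\frac{D}{p}\right)\left(\frac{D}{q}\right)=\E\left(\frac{D}{pq}\right)$ for $p\ne q$. The key input is the classical Stickelberger fact: for squarefree $R \bmod p$ with $r$ irreducible factors, $\left(\frac{D}{p}\right)=(-1)^{n-r}$. So $\left(\frac{D}{p}\right)$ is a function of the factorization type of $R\bmod p$, and similarly modulo $pq$.

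We then compare $R\bmod m$ ($m=p$ or $pq$) with a uniformly random monic polynomial mod $m$. For uniform polynomials, $(-1)^{n-r}$ averages to essentially $0$: the parity of the number of irreducible factors is equidistributed up to an error of size about $p^{-n/2}$ together with the non-squarefree contribution, which is $O(1/p)$. The comparison uses a Fourier / mixing argument on the coefficient vector. The map $(\omega_0,\dots,\omega_{n-1})\mapsto R\bmod m$ is a random walk in $(\Z/m)^n$. Its distribution is close in total variation, or at least in the relevant test functions, to uniform, provided the Fourier coefficients $\prod_j\hat\mu(\xi_j/m)$ are small for nonzero frequencies. This is where $H_2(\mu)>12$ should enter. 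One approach is to bound $\sum_\xi \prod_j|\hat\mu(\xi_j/m)|$ via Parseval, giving an $\ell^2$ bound like $m^{n}\norm{\mu}_2^{2n}$-type quantities restricted to a subset of coordinates. Alternatively, one could reduce the character sum of the parity function to a sum over few coefficients, using that the parity of factors is determined by a lower-dimensional quantity, namely the discriminant mod $m$ as a polynomial in the coefficients. The entropy threshold $12$ presumably comes from needing $m^{c}\norm{\mu}_2^{2}<1$ on an effectively restricted number of coordinates, with $m\approx p^2$ and polynomial losses.

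With such estimates we get $\E\left(\frac{D}{p}\right)=O(1/p)$ and covariances $O(1/P)$. Then $\Var(S)\ll |\PP| + |\PP|^2/P$, and the Chebyshev bound becomes $O(1/|\PP|+1/P)$. Taking $\PP$ all primes in $[P,2P]$ gives $|\PP|\approx P/\log P$, so the bound is $O(\log P/P)$. To reach $1/\log n$ we need $P$ roughly $\log n\log\log n$, which is fine if the equidistribution holds for $m\le P^2$ with $m$ as large as $(\log n)^{2+\epsilon}$.

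The main obstacle is the equidistribution of $\left(\frac{D}{m}\right)$ for the non-uniform small-box coefficients, uniformly for composite moduli $m=pq$. Here the Fourier mass of $\mu$ at frequencies $a/m$ must be controlled without GRH. Since $n\gg m$ there is enormous room, but the parity function is not a low-degree character. I would first try to express the indicator of each factorization type through traces of Frobenius, $\sum_{\alpha}\alpha^k$, which are linear in Newton power sums, and use the second-moment/entropy bound to show that $R\bmod m$ is close to uniform on the relevant finite set of statistics.
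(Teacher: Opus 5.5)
There is a genuine gap, at exactly the step you flag as the main obstacle: near-equidistribution of $\left(\frac{D}{p}\right)$ and $\left(\frac{D}{pq}\right)$ in the small box model.

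To get $O(1/\log n)$ from Chebyshev you need $|\PP|\gg\log n$, so the primes $p\approx P$ must tend to infinity while $\mu$ stays fixed. Once $\log p>H(\mu)$:
\begin{itemize}
\item $R \bmod p$ has entropy at most $nH(\mu)$, far below $n\log p$, so it is nowhere near uniform on $\FF_p^n$.
\item The Fourier coefficients $\prod_j\hat\mu(\xi_j/m)$ are not small at nonzero frequencies. For $\xi$ supported on one coordinate with $\xi_j$ small relative to $m$, the product is close to $1$.
\item The Parseval/Cauchy--Schwarz transfer you sketch bounds $|\E_\nu F-\E_{\mathrm{unif}}F|$ (for $|F|\le 1$) at best by $(m\norm{\mu}_2^2)^{n/2}=\exp\bigl(\tfrac n2(\log m-H_2(\mu))\bigr)$. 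This is useful only for the boundedly many moduli $m<e^{H_2(\mu)}$. Restricting to a subset of coordinates does not change the per-coordinate ratio.
\end{itemize}

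The structure of the test function does not rescue this. By Stickelberger, $\left(\frac{D}{p}\right)$ is the parity of the number of irreducible factors of $R\bmod p$. That parity depends on the root counts of $R$ in $\FF_{p^k}$ for all $k\le n$, i.e.\ on the law of $R(\alpha)$ for $\alpha$ of degree up to $n$. The best available mixing (the Konyagin--Breuillard--Varj\'u method, as extended in this paper) reaches only $p^k\le\exp(cn/\log^3 n)$, and no mixing argument can pass the entropy bound $p^k\lesssim e^{H(\mu)n}$.

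The same obstruction defeats your estimate that $p\mid D$ has probability $O(1/p)$: that requires excluding repeated factors mod $p$ of every degree up to $n/2$. Decorrelation mod $pq$ is harder still. Your suggested origin of the threshold $12$ ($m^c\norm{\mu}_2^2<1$ with $m\approx p^2$) cannot hold as $p\to\infty$, which is a symptom of the same problem.

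The missing idea is to never ask for equidistribution of the residue symbol of $D$, and to use the archimedean size of $D$ instead. The paper's argument runs as follows.
\begin{itemize}
\item It takes primes $p\in[n^{B_0},\exp(\sqrt n/\log^7 n)]$ and the event, of probability about $1/p$, that $R$ has a unique admissible double root $a\in\FF_p$. The number of such primes concentrates around $\sum_p 1/p\approx\frac12\log n$ by a second-moment bound; this is where $1/\log n$ comes from.
\item For such $p$ we have $p\mid D$. If $D$ is a square, then $p^2\mid D$, which forces a triple root, a lift of $a$ to a double root mod $p^2$, or a second repeated irreducible factor $Q$ of degree $k$ mod $p$.
\item The first two cases, and $Q$ of degree $k\le\log^4 n$, are excluded by mixing of $(R(a),R'(a),R(\alpha),R'(\alpha))$, together with inverse Littlewood--Offord for small-order $\alpha$.
\item Degrees up to $k_\mu(p)$, where $p^{k_\mu(p)+2}\approx\norm{\mu}_2^{-2n/3}$, are excluded by an $L^2$-flattening argument (Balog--Szemer\'edi--Gowers plus Sanders). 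This only needs the joint double-root probabilities to be much smaller than $p^{-k-2}$, not full mixing.
\item Repeated factors of larger degree are never excluded probabilistically. Each contributes $p^{k+1}\ge e^{(H_2(\mu)/3+o(1))n}$ to $D$. Over about $\frac12\log n$ primes this exceeds Mahler's bound $|D|\le(Hn)^{2n}$ as soon as $\frac16H_2(\mu)>2$.
\end{itemize}
That deterministic size argument is what gets around the entropy barrier your approach runs into.
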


Here $H_2(\mu)=-\log\norm{\mu}_2^2$ stands for the R\'{e}nyi entropy of order $2$. We do not believe that the entropy assumption is necessary for the theorem to hold, but it is an outcome of the strategy we use.

In fact, our proof shows that with high probability, %
there is a prime~$p$ such that $p|\disc(R)$ but $p^2\nmid\disc(R)$.

\begin{cor}\label{cor1}
Let $N>\exp(12)$ be an integer and assume that $\mu$ is supported uniformly on $N$ consecutive integers in $\Z$. Then there is a constant $C'(\mu)>0$ such that
\[
  \Pr(R\text{ is irreducible and has Galois group }\Sym(n) \suchthat \omega_0\ne 0) \ge 1- \frac{C'(\mu)}{\log n}
\]
for all sufficiently large $n$.
\end{cor}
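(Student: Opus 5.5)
Corollary~\ref{cor1} will follow from \Tref{thm:main} together with the known results on irreducibility and on the Galois group containing $\Alt(n)$ for the small box model. First, when $\mu$ is uniform on $N$ consecutive integers, $\norm{\mu}_2^2=1/N$, so $H_2(\mu)=\log N>12$ because $N>\exp(12)$. Hence the entropy hypothesis of \Tref{thm:main} holds. Conditioning on $\omega_0\ne 0$ only changes the law of $\omega_0$. That law is still supported in $[-H,H]$ with $H$ depending on $\mu$ only, and \Tref{thm:main} explicitly allows $\omega_0$ to have an arbitrary law. So \Tref{thm:main} applies to the conditioned model and gives
\[
\Pr(\disc(R)\text{ is a square}\suchthat \omega_0\ne 0)\le \frac{C(\mu)}{\log n}.
\]
Since $H$ depends only on $\mu$, the threshold $n_0$ also depends only on $\mu$.

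Next, I will invoke the unconditional results of \cite{BarySorokerKozma20,BarySorokerKoukoulopoulosKozma23}. For $\mu$ uniform on $N$ consecutive integers (and in particular for $N$ this large), these show that, conditioned on $\omega_0\ne0$, the polynomial $R$ is irreducible and its Galois group contains $\Alt(n)$ with probability at least $1-O_\mu(1/\log n)$. I need to check that the stated error rate is indeed $O(1/\log n)$ or better. If the cited bounds are weaker for some ranges of $N$, one must appeal to the version of the result whose error term is compatible with $C'(\mu)/\log n$. This bookkeeping is the step I expect to be the main obstacle: matching the exact hypotheses (consecutive integers, the conditioning on $\omega_0\ne0$) and the error terms of those papers with the present setting.

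Finally, suppose $R$ is irreducible. Then it is separable, so $\disc(R)\neq 0$, and its Galois group $G$ lies in $\Alt(n)$ if and only if $\disc(R)$ is a square. Therefore, on the event that $R$ is irreducible, $G\supseteq\Alt(n)$, and $\disc(R)$ is not a square, we have $G=\Sym(n)$. A union bound over the two failure events gives
\[
\Pr(R\text{ irreducible with } G=\Sym(n)\suchthat \omega_0\ne0)\ge 1-\frac{C(\mu)+C''(\mu)}{\log n},
\]
and we set $C'(\mu)=C(\mu)+C''(\mu)$.
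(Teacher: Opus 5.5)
Your proposal is correct and follows the paper's own argument. You check that $H_2(\mu)=\log N>12$, apply \Tref{thm:main} to the model where $\omega_0$ is conditioned to be nonzero, and take irreducibility together with $\Gal(R)\supseteq\Alt(n)$ from \cite{BarySorokerKoukoulopoulosKozma23}; the square-discriminant criterion then forces $\Sym(n)$. The bookkeeping you flagged is harmless: for uniform measures on at least $35$ consecutive integers, \cite{BarySorokerKoukoulopoulosKozma23} gives these with probability at least $1-n^{-c(\mu)}$, and \cite{BarySorokerKozma20}, whose arithmetic hypotheses need not hold here, is not needed.
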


Under the assumptions of Corollary \ref{cor1}, irreducibility of $R$ with high probability ($\ge 1-1/n^{c(\mu)}$) was proven in \cite{BarySorokerKoukoulopoulosKozma23} along with the fact that the Galois group $\Gal(R)$ is either $\Alt(n)$ or $\Sym(n)$. The corollary follows immediately from the theorem upon noticing that $\disc(R)$ must be a square if $\Gal(R)\leq \Alt(n)$. %

We also note that the assumption that $\mu$ is uniformly supported on $N>\exp(12)$ consecutive integers can be replaced by $\mu$ arbitrary with $H_2(\mu)>12$ if we further assume the generalized Riemann hypothesis for Dedekind zeta functions of number fields. This follows from \cite[Theorem 2]{BreuillardVarju19}.

\subsection*{Related works}

The questions mentioned above were already studied by Hilbert~\cite{Hilbert1892} and van der Waerden~\cite{vanderWaerden36}. In particular, van der Waerden showed that, if one samples a polynomial $R$ uniformly at random from the set of all monic polynomials of a fixed degree $n$ with non-leading coefficients in $[-L,L]\cap\Z$, then the polynomial is irreducible and has full Galois group $\Sym(n)$ with probability tending to $1$ as $L$ tends to $\infty$, i.e.,
\[
\lim_{L\to\infty}\Pr(\Gal(R)\ne\Sym(n))=0.
\]
This is known as the \emph{large box model}.

Several later works refined the estimates for $\Pr(\Gal(R)\ne\Sym(n))$ and studied the probabilities of the other possible Galois groups. Van der Waerden conjectured that the main contribution comes from polynomials with a rational root, whose Galois group is typically $\Sym(n-1)$. Chela~\cite{Chela63} showed that, for every fixed $n\ge3$, the probability of reducibility is asymptotic to $c_n/L$ for an explicit constant $c_n>0$. Gallagher~\cite{Gallagher73} used the large sieve to prove the bound
\[
\Pr(\Gal(R)\ne\Sym(n))=O_n\left(\frac{\log L}{\sqrt L}\right),
\]
which was later improved by Dietmann~\cite{Dietmann13}. More recently, Bhargava~\cite{Bhargava25} proved that, for every fixed $n\ge3$,
\[
\Pr(\Gal(R)\ne\Sym(n))=O_n\left(\frac{1}{L}\right).
\]
This determines the order of this probability, although it does not give the asymptotic formula predicted by van der Waerden. These results concern a fixed degree, and the dependence of the estimates on $n$ is important when considering models in which the degree also grows.

Random polynomials with independent coefficients have also been widely studied in probability theory, particularly in connection with the distribution of their roots; see e.g.\ \cite{BlochPolya31,Kac43}. Their arithmetic properties in the small box model have proved more difficult to understand. Here the degree tends to infinity while the coefficient measure remains fixed, so the large box estimates do not directly apply.

Odlyzko and Poonen~\cite{OdlyzkoPoonen93} studied polynomials with coefficients in $\set{0,1}$ and with leading and constant coefficients equal to $1$. They conjectured that a uniformly sampled polynomial of this form should be irreducible with high probability as its degree $n$ tends to $\infty$. Konyagin~\cite{Konyagin99} showed that such polynomials do not have factors of degree less than $\frac{cn}{\log n}$ with high probability, for some universal constant $c>0$. He also obtained a lower bound of order $1/\log n$ for the probability of irreducibility. These results exclude small factors, but still allow a factorization into two or more factors of large degree.

Konyagin's techniques were further developed by Breuillard and Varj\'{u}~\cite{BreuillardVarju19}. Under the generalized Riemann hypothesis for Dedekind zeta functions of number fields, they proved that random polynomials in the small box model are irreducible and their Galois group contains the alternating group $\Alt(n)$ with high probability. In particular, their results apply to every fixed non-degenerate, finitely supported coefficient measure on $\Z$, after conditioning the constant coefficient to be nonzero.

In \cite{BarySorokerKozma20} the same conclusion was obtained unconditionally under arithmetic restrictions on the coefficient measure. Their result applies, for example, to the uniform measure on $\set{1,\dots,L}$ when $L$ is divisible by at least four distinct primes. A similar method of testing 4 primes was used in  \cite{BarySorokerKoukoulopoulosKozma23} later to get analytic rather than arithmetic conditions on the distribution. In particular, they proved irreducibility and containment of $\Alt(n)$ with high probability when the coefficient measure is uniform on any fixed interval of at least $35$ integers, again conditioning the constant coefficient to be nonzero. They also showed that, for every fixed non-degenerate, finitely supported measure $\mu$, there is a constant $\theta=\theta(\mu)>0$ such that the random polynomial has no factors of degree at most $\theta n$ with high probability.

Thus, in many cases, these works reduce the question of determining the Galois group to distinguishing $\Alt(n)$ from $\Sym(n)$. However, this distinction requires an additional argument, even when irreducibility and containment of $\Alt(n)$ are already known. For a separable polynomial, it is precisely the question of whether its discriminant is a square.

The probability that a random polynomial has a double root, equivalently that its discriminant vanishes, has also been studied. Feldheim and Sen \cite{FeldheimSen17} obtained estimates for this probability for bounded integer-valued coefficient distributions whose largest atom has mass at most $1/2$. More recently, Michelen and Yakir \cite{MichelenYakir26} proved a limit law for the minimal distance between roots, which in particular implies the absence of double roots with high probability under their assumptions. In a recent work \cite{MichelenYakir25}, they also proved a law of large numbers for the discriminant of random polynomials.

In a recent work~\cite{BarySorokerGoldgraber25}, Bary-Soroker and Goldgraber studied random polynomials from the \emph{growing box model}, in which the non-leading coefficients are independent and uniform on $\set{-H_n,\dots,H_n}$, where $H_n\to\infty$ as the degree $n$ tends to infinity. They proved that the random polynomial has full Galois group $\Sym(n)$ with high probability, regardless of how slowly $H_n$ tends to infinity. The main new ingredient is an upper bound for the probability that the discriminant is a square. Their result therefore allows the coefficient range to grow arbitrarily slowly, but still requires it to grow. Our theorem concerns a fixed coefficient measure satisfying the entropy assumption stated above.

Related questions have also been studied for random reciprocal polynomials. Hokken \cite{Hokken26} obtained asymptotic counts of reciprocal and skew-reciprocal polynomials with coefficients in $\set{-1,1}$ and square discriminant. Hokken and Koukoulopoulos \cite{HokkenKoukoulopoulos25} studied irreducibility and Galois groups for more general coefficient distributions. They also showed that the probability of a nonzero square discriminant tends to zero when the free coefficients are sampled independently from a fixed finitely supported nondegenerate measure. In the reciprocal model, the symmetry of the coefficients gives a simpler expression for the discriminant up to a square, which is used in these arguments.

\subsection*{Idea of the proof and structure of the paper}

Our strategy for proving \Tref{thm:main} is to study the number of times in which a given prime $p$ divides the discriminant $\disc(R)$. This is closely related to the factorization of $R$ modulo $p$: as a first approximation, one can think that any double factor in this factorization, i.e.\ any $Q\in\FF_p[x]$ such that $Q^2\mid R$ over $\FF_p$, contributes $p^{\deg Q}$ to this discriminant. A precise statement would require us to also consider triple roots modulo $p$ and double roots modulo $p^2$, but this turns out to play only a technical part in the proof. This suggests the following approach: find a prime $p$ and $a\in\FF_p$ such that $(x-a)^2\mid R$ over $\FF_p$, and such that no other $Q\in\FF_p[x]$ satisfies $Q^2\mid R$. This will show that $p\mid\disc(R)$ but $p^2\nmid\disc(R)$, so $\disc(R)$ cannot be a square.

It is natural to conjecture that the probability of finding an $a\in\FF_p$ such that $(x-a)^2\mid R$ is approximately $\frac{1}{p}$, and that these events are roughly independent for different primes. Indeed, for a given ``generic'' $a\in\FF_p$, one could expect $\Pr(R(a)=R'(a)=0)\approx\frac{1}{p^2}$, which we then sum over $a\in\FF_p$ to get $\frac{1}{p}$, while the probability for the existence of double roots in larger extensions should be negligible compared to $\frac{1}{p}$. Since $\sum_p\frac{1}{p}$ diverges, we should be able to find such a prime if we search far enough. Unfortunately, showing that no other $Q$ satisfies $Q^2\mid R$ over $\FF_p$ seems beyond current techniques.

Indeed, to check whether an irreducible $Q\in\FF_p[x]$ divides $R$, the approach of all of \cite{BarySorokerKozma20,BarySorokerKoukoulopoulosKozma23,BreuillardVarju19} is to take a root $\alpha$ of $Q$ (which lies in $\FF_{p^{\deg Q}}$), consider $R(\alpha)$ as a Markov chain, and show that it \emph{mixes} by time $n=\deg R$. We follow an analogous approach and show that the pair $(R(\alpha),R'(\alpha))$ is mixed by time $n$ under similar conditions to the mixing of $R(\alpha)$ alone. This allows us to exclude double roots in extensions of $\FF_p$ which are not too large. However, this approach has a hard limit known as the entropy bound: the entropy of $R(\alpha)$ (and of $(R(\alpha),R'(\alpha))$) is not larger than the entropy of $R$ itself, so mixing can only happen if the entropy of $R$ is at least $\log(p^{\deg Q})$. This restricts the approach only to a constant number of primes, so the sum $\sum_p\frac{1}{p}$ will not guarantee the existence of a good prime $p$ as above.

A more refined approach is as follows. Assume you could get to the entropy bound for every prime $p$. This would show that with probability $\approx 1/p$ the polynomial $R$ has a double root in $\FF_{p}$ but no double root in $\FF_{p^k}$ for any $k\le H(\mu)n/\log p$, from which we would be able to conclude that $p^{\lceil H(\mu)n/\log p\rceil}\approx \exp(H(\mu)n)$ divides $\disc(R)$. Since $\sum_{p\le\exp(n)}1/p\approx\log n$, this would allow us to find divisors of $\disc(R)$ whose product is approximately
$\exp(H(\mu)n\log n)$. But Hadamard's bound gives $\disc(R)\le \exp(2n\log n + O(n))$, which would be a contradiction if $H(\mu)$ is sufficiently large and if $\disc(R)\ne 0$. This is our basic scheme.

The hardest part in performing this scheme is getting to the entropy bound. The best existing techniques, by Breuillard and Varj\'{u}~\cite{BreuillardVarju19} (based on ideas of Konyagin~\cite{Konyagin92}), get to within $\log^3 n$ of the entropy bound. Namely, we show using these techniques that $(R(\alpha),R'(\alpha))$ is mixed if $p^k\le\exp(n/\log^3 n)$. This is not enough for our purposes. Under GRH,~\cite{BreuillardVarju22} do get to the entropy bound, but only for $R(\alpha)$ and for most primes $p$ and most $\alpha\in\FF_p$ --- not for the couple $(R(\alpha),R'(\alpha))$. Getting to the entropy bound remains an open problem.

However, what we need is much weaker than full mixing. We need to estimate $\Pr(R(a)=R'(a)=R(\alpha)=R'(\alpha)=0)$ (which, under full mixing, would be approximately  $\frac{1}{p^{2k+2}}$). It is enough for us to show that it is significantly less than $\frac{1}{p^{k+2}}$. This will allow to sum over $a$ and $\alpha$, and get that the probability that $R$ has a double root in $\FF_p$ and another double root in some extension $\FF_{p^k}$ is significantly less than $\frac{1}{p}$.

The core argument uses two results from additive number theory. Denote by $\nu_{a,\alpha}$ the measure on $\FF_p^2\times\FF_{p^k}^2$ given by the distribution of $(R(a),R'(a),R(\alpha),R'(\alpha))$, and let $\nu^{(d,e)}_{a,\alpha}$ by the measure one gets by considering only the subsum $\omega_dx^d+\dots+\omega_ex^e$ of our random polynomial (so $\nu^{(0,n)}_{a,\alpha}=\nu_{a,\alpha})$. We first reduce the problem of bounding $\nu_{a,\alpha}(0,0,0,0)$ to a problem of bounding the $L^2$ norm $\norm{\nu^{(1,n/2)}_{a,\alpha}}_2$ using Cauchy-Schwarz. Let $m=\lfloor\frac{1}{6}n\rfloor$. We then use the fact that $\nu^{(1,2m)}=\nu^{(1/m)}*\nu^{(m+1,2m)}$ to claim that either $\norm{\nu_{a,\alpha}^{(1/2m)}}_2$ is much smaller than $\norm{\nu_{a,\alpha}^{(1,m)}}_2$, or it has a additive structure. Namely, we first apply the Balog-Szemer\'{e}di-Gowers theorem to claim that $\nu_{a,\alpha}^{(1,m)}$ is quite close (in a specific sense) to the uniform measure on some set $H\sub\FF_p^2\times\FF_{p^k}^2$, with $H$ satisfying that the Minkowski sum $H+H+H$ not much larger than $H$. We then apply a quasi-polynomial version of Freiman's theorem due to Sanders \cite{Sanders13} to conclude that $H$ is similar to a subspace. But this is impossible. A simple, but crucial argument shows that $\nu_{a,\alpha}^{(1,m)}(U)\le(1-c)^{\operatorname{codim} U}$ for any subspace~$U$.

The conclusion of this part is that either $\norm{\nu_{a,\alpha}^{(1,m)}}_2$ is small, or $\norm{\nu_{a,\alpha}^{(1,2m)}}_2$ is much smaller than $\norm{\nu_{a,\alpha}^{(1,m)}}_2$. Using that $\nu_{a,\alpha}^{(1,3m)}=\nu_{a,\alpha}^{(1,2m)}*\nu_{a,\alpha}^{(2m+1,3m)}$ and that $\norm{\nu_{a,\alpha}^{(2m+1,3m)}}_2=\norm{\nu_{a,\alpha}^{(1,m)}}_2$ allows to get a usable estimate for $\sum_{a,\alpha}\norm{\nu_{a,\alpha}^{(1,3m)}}_2$. This is the most important part of the argument and explains the number $12$ which appears in the statement of \Tref{thm:main}. Indeed, the argument above works as long as $p^k\le\norm{\mu}_2^{-2m}$ (approximately), and since $m\approx\frac{1}{6}n$, this means that we get a contribution of $\norm{\mu}_2^{-n/3}$ to $\disc(R)$ for every prime $p$. Two additional factors comes from the facts
that, for various reasons, we can only work with primes $p\le\exp(\sqrt{n})$ (so $\sum_p \frac{1}{p}=(\frac{1}{2}+o(1))\log n$) and from the fact that $|\disc(R)|\le\exp((2+o(1))n\log n)$.

We finally remark that the Breuillard--Varj\'{u} techniques are only effective when the multiplicative order of $\alpha\in\FF_{p^k}$ is not too small. This is expected: the value of $R(1)$ is essentially a simple random walk on $\FF_p$, which mixes only after $\approx p^2$ steps. A similar phenomenon holds for $R(\alpha)$ if $\alpha$ is of small multiplicative order. We deal with this issue by applying the inverse Littlewood--Offord theorem, with which we show that the probability of having a ``generic'' double root in $\FF_p$ and a double root of small order in some $\FF_{p^k}$ is small.\medskip

The strcture of the paper is therefore as follows. In \Sref{sec:konyagin}, we prove the extended version of the Breuillard--Varj\'{u} argument. Using this we study in \Sref{sec:prime-divisors} the number of primes $p$ with a double root in $\FF_p$, and no other double root in $\FF_{p^k}$ for small~$k$. Issues
revolving around $a$ and $\alpha$ of small multiplicative order are handled in~\Sref{sec:nonadmissible}, while issues revolving around lifting of double zeros from $\FF_p$ to $\Z/p^2\Z$ are handled in~\Sref{sec:mod-p^2}. We explain the core argument sketched above in \Sref{sec:BSG}. Finally, we prove the main theorem in \Sref{sec:proof}, where we also give the necessary background from algebraic number theory about prime divisors of discriminants.

\subsection*{Notations}

Throughout the paper, $\mu$ is a non-degenerate finitely supported probability measure on $\Z$. We write $H_{\mu}\coloneqq\max\set{\left|u\right|\suchthat u\in\supp(\mu)}$.

For any prime $p$, we write $\FFF_p=\FF_p^{\times}$, and for any $k\ge 2$ we write $\FFF_{p^k}$ for the set of elements of $\FF_{p^k}$ which do not lie in a proper subfield of $\FF_{p^k}$.

We denote constants by $c$ and $C$. Numbered constants are fixed and do not change their values throughout the paper. Unnumbered constants may change their value between occurrences. All constants might depend on the measure $\mu$, including constant implicit in Landau's $O$ or Vinogradov's $\ll$ notation. The notation $o$ will refer to convergence as $n\to\infty$, possibly not uniformly in $\mu$ but uniformly in other parameters ($p$, $k$, etc).

The letter $R$ always denotes the random polynomial
\[
R(x) = x^n + \omega_{n-1}x^{n-1} + \cdots + \omega_0
\]
of degree $n$. For any $d\ge 1$ and $l\ge 0$, we write
\[
R_{d,d+l}(x) = \omega_{d+l}x^{d+l} + \cdots + \omega_d x^d.
\]

\subsection*{Use of AI}

We used GPT-6 Astra for proofreading and suggestions on presentation, and to help find a short proof of \Lref{lem:finite-group-ILO}. We checked all suggestions incorporated into the paper and take full responsibility for its contents.

\subsection*{Acknowledgements}

We thank P\'{e}ter Varj\'{u} and Lior Bary-Soroker for many insightful discussions. %

GB is supported by the ERC consolidator grant CUTOFF (101123174). Views and opinions expressed are however those of the authors only and do not necessarily reflect those of the European Union or the European Research Council Executive Agency.

GK is supported by the Israel Science Foundation and by the Binational Science Foundation.

For the purpose of open access, the authors have applied a CC BY public copyright licence to any author accepted manuscript arising from this submission.

\section{Preliminaries}\label{sec:prelim}

\subsection{Resultants and discriminants}

We recall some useful bounds about resultants and discriminants that will be used in the paper. For a polynomial $P(x)=a_nx^n+a_{n-1}x^{n-1}+\cdots+a_0\in\Z[x]$ of degree $n$, we write $H(P)\coloneqq\max_{0\le i\le n}\left|a_i\right|$ for its height and $\norm{P}_2\coloneqq\left(\sum_{i=0}^n a_i^2\right)^{1/2}$.

\begin{fact}
For any two polynomials $P_1,P_2\in\Z[x]$ have degrees $m,n$ respectively, then we have
\[
|\Res(P_1,P_2)| \le \norm{P_1}_2^n \norm{P_2}^m \le  (m+1)^{n/2}(n+1)^{m/2}H(P_1)^nH(P_2)^m.
\]
\end{fact}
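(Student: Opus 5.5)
The plan is to use the Sylvester matrix description of the resultant together with Hadamard's inequality. Recall that $\Res(P_1,P_2)$ is the determinant of the $(m+n)\times(m+n)$ Sylvester matrix $S$. This matrix has $n$ rows formed from shifted copies of the coefficient vector of $P_1$, and $m$ rows formed from shifted copies of the coefficient vector of $P_2$. We write $\norm{P_2}^m$ in the statement for $\norm{P_2}_2^m$, matching the notation just introduced.

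\textbf{First inequality.} Hadamard's inequality bounds $|\det S|$ by the product of the Euclidean norms of its rows. Each row coming from $P_1$ contains exactly the coefficients $a_0,\dots,a_m$ of $P_1$, padded with zeros, so its norm is $\norm{P_1}_2$. Likewise each row coming from $P_2$ has norm $\norm{P_2}_2$. There are $n$ rows of the first kind and $m$ of the second, which gives
\[
|\Res(P_1,P_2)|=|\det S|\le \norm{P_1}_2^n\norm{P_2}_2^m .
\]

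\textbf{Second inequality.} This is the trivial estimate $\norm{P}_2^2=\sum_{i=0}^{\deg P} a_i^2\le(\deg P+1)H(P)^2$. It gives $\norm{P_1}_2\le (m+1)^{1/2}H(P_1)$ and $\norm{P_2}_2\le (n+1)^{1/2}H(P_2)$. Raising these to the powers $n$ and $m$ respectively yields
\[
\norm{P_1}_2^n\norm{P_2}_2^m\le (m+1)^{n/2}(n+1)^{m/2}H(P_1)^nH(P_2)^m .
\]

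There is no real obstacle here. The only care needed is to keep the powers correctly matched: the number of rows built from $P_1$ equals $\deg P_2=n$, which is why $\norm{P_1}_2$ appears to the power $n$. One should also note that for degenerate cases, such as a constant polynomial, the Sylvester-matrix convention still gives the stated formula.
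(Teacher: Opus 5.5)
Your proof is correct and is the standard argument: apply Hadamard's inequality to the rows of the Sylvester matrix ($n$ rows of norm $\norm{P_1}_2$ and $m$ rows of norm $\norm{P_2}_2$), then use $\norm{P}_2\le\sqrt{\deg P+1}\,H(P)$. The paper states this as a Fact without proof, but this determinant-plus-Hadamard reasoning is what it relies on; it cites the same argument for the discriminant bound in the next Fact.
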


We recall that the Mahler measure of a polynomial $P(x)=a_n\prod_{i=1}^n(x-z_i)$ is given by $M(P)=a_n\prod_{i=1}^n\max\set{|z_i|,1}$. Mahler's bound on the discriminant \cite[Theorem 1]{Mahler64} yields:

\begin{fact}\label{fact:disc-bound}
For any polynomial $P(x)\in\Z[x]$ of degree~$n$, the Mahler measure of $P$ satisfies
\[
M(P) \le \norm{P}_2 \le \sqrt{n+1}H(P).
\]
Therefore, writing the discriminant as a determinant and using Hadamard's bound gives
\[
|\disc(P)| \le n^n M(P)^{2n-2} \le n^n(\sqrt{n+1}H(P))^{2n-2} \le (nH(P))^{2n}.
\]
\end{fact}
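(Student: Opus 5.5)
The plan is to derive the displayed chain from the root expression of the discriminant, using Hadamard's inequality only as a cross-check. Write $P(x)=a_n\prod_{i=1}^n(x-z_i)$. The first inequality $M(P)\le\norm{P}_2$ is Landau's inequality. I would prove it via Jensen's formula, which gives $\log M(P)=\int_0^1\log|P(e^{2\pi i t})|\,dt$. Concavity of $\log$ then bounds this by $\frac12\log\int_0^1|P(e^{2\pi it})|^2\,dt$, and Parseval identifies the last integral with $\norm{P}_2^2$. The second inequality $\norm{P}_2\le\sqrt{n+1}\,H(P)$ is immediate, since $P$ has $n+1$ coefficients, each bounded by $H(P)$ in absolute value.

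For the discriminant, I start from
\[
\disc(P)=a_n^{2n-2}\prod_{i<j}(z_i-z_j)^2=a_n^{2n-2}\det(V)^2,
\]
where $V=(z_j^{\,i-1})_{i,j}$ is the Vandermonde matrix. Next I scale column $j$ by $\max\{1,|z_j|\}^{-(n-1)}$. Every entry of the scaled matrix has absolute value at most $1$, so each column has Euclidean norm at most $\sqrt n$. Hadamard's inequality then gives
\[
|\det V|\le n^{n/2}\prod_j\max\{1,|z_j|\}^{n-1}.
\]
Squaring and multiplying by $|a_n|^{2n-2}$ yields $|\disc(P)|\le n^n M(P)^{2n-2}$, which is Mahler's bound. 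If one insists on the Sylvester-matrix formulation suggested by the statement, Hadamard applied directly to $\Res(P,P')$ gives a similar but weaker bound. The Vandermonde route is the clean one.

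Finally, I insert $M(P)\le\sqrt{n+1}\,H(P)$ to get $n^n(\sqrt{n+1}H(P))^{2n-2}$. For the last inequality I need
\[
n^n(n+1)^{n-1}H^{2n-2}\le n^{2n}H^{2n}.
\]
Since $H\ge1$, it suffices to have $(n+1)^{n-1}\le n^n$, i.e.\ $(1+1/n)^{n-1}\le n$. This holds for $n\ge 3$, because the left side is less than $e$. For $n=2$ the left side equals $3/2\le 2$, and for $n=1$ the discriminant is $1$, so the bound is trivial.

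The only genuinely nontrivial ingredient is the column-scaling step combined with Hadamard, which produces exactly the exponent $2n-2$ on $M(P)$. Everything else is routine bookkeeping, so I do not expect a real obstacle.
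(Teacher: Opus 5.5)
Your proof is correct and takes essentially the paper's route. The paper simply cites Mahler's Theorem~1, whose proof is your Vandermonde column-scaling plus Hadamard argument, so ``writing the discriminant as a determinant'' means $a_n^{2n-2}\det(V)^2$ and not the Sylvester matrix. The rest of your argument supplies details the paper leaves implicit: the Jensen--Parseval proof of Landau's inequality $M(P)\le\norm{P}_2$ and the elementary check $(n+1)^{n-1}\le n^n$.
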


\subsection{Admissible elements}\label{sec:admiss}

An element of a finite field $\FF_{p^k}$ is called \textbf{admissible} if its multiplicative order is at least $n$ (and, if $k\ge 2$, it lies in $\FFF_{p^k}$). We write $\AA_{p^k}$ for the set of admissible elements in $\FF_{p^k}$, and $\BB_{p^k}$ for the set of elements of $\FFF_{p^k}$ which have multiplicative order smaller than $n$.

\begin{lem}\label{lem:count-inadmissible}
Let $p$ be a prime. Then
\[
\sum_{k=1}^{\infty}\left|\BB_{p^k}\right| \le n^2.
\]
\end{lem}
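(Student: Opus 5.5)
Every element of $\BB_{p^k}$ is a nonzero element of $\FF_{p^k}$ whose multiplicative order $d$ satisfies $d<n$. It is therefore a root of $x^d-1$ for some $1\le d\le n-1$. The plan is to count all such elements, over all $k$ at once, as roots of these polynomials in an algebraic closure $\overline{\FF}_p$.

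The key observation is that the sets $\BB_{p^k}$, for distinct $k$, are pairwise disjoint subsets of $\overline{\FF}_p$. For $k\ge 2$, $\BB_{p^k}\sub\FFF_{p^k}$ consists of elements whose minimal field of definition is exactly $\FF_{p^k}$. For $k=1$, $\BB_p$ lies in $\FF_p$. Thus an element $\beta\in\overline{\FF}_p$ can lie in $\BB_{p^k}$ only for $k=[\FF_p(\beta):\FF_p]$. (This is where the convention $\FFF_{p^k}$ in the definition matters: without it, elements of $\FF_p$ would be counted in every $\BB_{p^k}$.) Hence
\[
\sum_{k=1}^\infty|\BB_{p^k}| \le \#\set{\beta\in\overline{\FF}_p^{\times} \suchthat \beta^d=1 \text{ for some } 1\le d\le n-1}.
\]

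Each polynomial $x^d-1$ has at most $d$ roots in the field $\overline{\FF}_p$. Therefore
\[
\#\bigcup_{d=1}^{n-1}\set{\beta \suchthat \beta^d=1} \le \sum_{d=1}^{n-1} d = \frac{n(n-1)}{2}\le n^2.
\]
This proves the lemma.

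There is no real obstacle here; the only point requiring care is the disjointness across different $k$, which is guaranteed by the definition of $\FFF_{p^k}$. If one wanted a sharper count, one could note that the union is exactly the set of roots of unity of order less than $n$. Counting these by exact order $d$ gives at most $\sum_{d<n}\varphi(d)$, but the crude bound above already suffices.
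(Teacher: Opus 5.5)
Your proposal is correct and follows essentially the paper's route: the $\BB_{p^k}$ are pairwise disjoint by the definition of $\FFF_{p^k}$, so their union is the set of elements of $\overline{\FF_p}^{\times}$ of order less than $n$, which is then counted. The only difference is that the paper counts by exact order, getting at most $\sum_{d<n}\varphi(d)$, while you bound the number of roots of each $x^d-1$ by $d$, getting $\sum_{d<n} d$; either count is at most $n^2$.
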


\begin{proof}
By definition, the sets $\set{\BB_{p^k}}_{k=1}^{\infty}$ are pairwise disjoint, and their union is precisely the set of nonzero elements of $\overline{\FF_p}$ of multiplicative order less than $n$. For each $1\le d\le n$ there are at most $\varphi(d)$ elements in $\overline{\FF_p}$ of order $d$, hence
\[
\sum_{k=1}^{\infty}\left|\BB_{p^k}\right| = \left|\bigcupdot_{k=1}^{\infty}\BB_{p^k}\right| \le \sum_{d=1}^{n-1} \varphi(d) \le n^2. \qedhere
\]
\end{proof}

\section{Mixing estimates for a random polynomial and its derivatives}\label{sec:konyagin}

In this section we will study the mixing time of the random polynomial and its derivatives in several finite fields. This will allow us to estimate the number of primes $p$ for which the random polynomial $R$ has a unique double root in $\FF_p$, while not having a triple root in $\FF_p$. We will also use the results of this section to bound the probability that $R$ has a double root in small extensions of $\FF_p$.

As in previous works, we cannot expect the mixing results to be the same for all $\alpha\in\FF_{p^k}$. Indeed, if the multiplicative order of $\alpha$ is small, then $R(\alpha)$ mixes much slower than for generic $\alpha\in\FF_{p^k}$. We will therefore be mostly interested in the case where $\alpha$ is admissible, and add this assumption whenever necessary.

\subsection{Setup and notation}

We first set up the notations which will be used throughout this section. We fix $M$ distinct primes $p_1,\dots,p_M$. For each $1\le j\le M$, we take $t_j$ (not necessarily distinct) prime powers $q_{j,1}=p_j^{k_{j,1}},\dots,q_{j,t_j}=p_j^{k_{j,t_j}}$. We set
\[
\mathcal{V}_j = \bigoplus_{l=1}^{t_j} \FF_{q_{j,l}}
\]
for each $1\le j\le M$, and
\begin{align*}
\mathcal{V} &= \bigoplus_{j=1}^M\mathcal{V}_j, & T &= \max_{1\le j\le M}t_j & D &= \max_{1\le j\le M}\sum_{l=1}^{t_j} k_{j,l}, & Q = \prod_{j=1}^M p_j.
\end{align*}

For any $\alpha=(\alpha_1,\dots,\alpha_M)\in\mathcal{V}$, we write $\alpha_j=(\alpha_{j,1},\dots,\alpha_{j,t_j})\in\mathcal{V}_j$. We also write $\alpha^m=(\alpha_{j,l}^m)_{j,l}$, $m\alpha=(m\alpha_{j,l})_{j,l}$, and for another $\beta\in\mathcal{V}$ we write $\alpha\beta=(\alpha_{j,l}\beta_{j,l})_{j,l}$.

For $\alpha\in\mathcal{V}$, we write
\[
\tr(\alpha) = (\tr(\alpha_1),\dots,\tr(\alpha_M)) \in \bigoplus_{j=1}^M\FF_{p_j},
\]
where
\[
\tr(\alpha_j) = \sum_{l=1}^{t_j}\tr_{\FF_{q_{j,l}}/\FF_{p_j}}(\alpha_{j,l}).
\]
For each $1\le j\le M$ we fix the standard additive identification $\iota_j\colon \FF_{p_j}\to \Z/p_j\Z$. We define an additive homomorphism $\psi_j\colon\FF_{p_j}\to\Z/Q\Z$ by
\[
\psi_j(x)=\frac{Q}{p_j}\iota_j(x).
\]
We can then define $\Psi\colon\bigoplus_{j=1}^M\FF_{p_j}\to\Z/Q\Z$ by
\begin{equation}\label{eq:def Psi}
\Psi(\alpha)=\sum_{j=1}^M\psi_j(\alpha_j).
\end{equation}

\begin{defn}\label{def:generic}
We say that $\alpha\in\mathcal{V}$ is \textbf{generic} if, for every $1\le j\le M$, the coordinates $\alpha_{j,1},\dots,\alpha_{j,t_j}$ are nonzero, no two of them are Galois conjugate over~$\FF_{p_j}$, and each $\alpha_{j,l}$ does not lie in a proper subfield of $\FF_{q_{j,l}}$.
\end{defn}

We fix a non-degenerate finitely supported probability measure $\mu$ on $\Z$. We then take a random polynomial
\[
R(x) = x^n + \omega_{n-1}x^{n-1} + \cdots + \omega_0\in\Z[x],
\]
where $\omega_1,\omega_2,\dots$ are i.i.d.\ $\mu$-random variables, and $\omega_0$ is independent of $\omega_1,\omega_2,\dots$. We do not assume that $\omega_0$ has the same distribution as the other coefficients (this will be convenient to handle the root 0). We also fix a positive integer $r\ge 1$, and assume throughout that
\[
r<\min\set{p_1,\dots,p_M}.
\]
For a given $\alpha\in\mathcal{V}$, we define $\nu_{\alpha}^{(n)}$ to be the law of~$(R(\alpha),R'(\alpha),\dots,R^{(r-1)}(\alpha))\in\mathcal{V}^r$. Furthermore, for nonnegative integers $d,l\ge 0$, we write $\nu_{\alpha}^{(d,d+l)}$ for the law of $(R_{d,d+l}(\alpha),R_{d,d+l}'(\alpha),\dots,R_{d,d+l}^{(r-1)}(\alpha))$, where
\[
R_{d,d+l}(x)=\omega_{d+l}x^{d+l}+\cdots+\omega_dx^d.
\]
For any $j\ge 0$, we set
\[
\mathbf{v}_j(\alpha) \coloneqq \left(\alpha^j, \df{j}{1}\alpha^{j-1}, \dots, \df{j}{r-1}\alpha^{j-(r-1)}\right) \in \mathcal{V}^r
\]
where $\df{j}{k}=j(j-1)\cdots(j-(k-1))$, so that
\begin{equation}\label{eq:expr-vj}
  (R_{d,d+l}(\alpha),R_{d,d+l}'(\alpha),\dots,R_{d,d+l}^{(r-1)}(\alpha)) = \sum_{j=d}^{d+l}\omega_j\mathbf{v}_j(\alpha).
\end{equation}
We interpret the $s$-th entry of $\mathbf{v}_j(\alpha)$ as zero if $s>j$.

\begin{lem}\label{lem:linear-transform}
For any $\alpha\in\mathcal{V}$ and integers $d\ge 1$ and $l,t\ge 0$, we have
\[
\nu_{\alpha}^{(d+t,d+l+t)} = (A_{\alpha}^t)_*\nu_{\alpha}^{(d,d+l)},
\]
where $A_{\alpha}$ is the linear operator defined on $\mathcal{V}^r$ by
\[
(A_{\alpha}v)^{(s)} = \alpha v^{(s)} + sv^{(s-1)}
\]
for every $v=(v^{(0)},\dots,v^{(r-1)})\in\mathcal{V}^r$ and $0\le s\le r-1$ (where $v^{(-1)}=0$).
\end{lem}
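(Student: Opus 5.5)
By \eqref{eq:expr-vj}, $\nu_{\alpha}^{(d+t,d+l+t)}$ is the law of $\sum_{j=d+t}^{d+l+t}\omega_j\mathbf{v}_j(\alpha)$, and $\nu_{\alpha}^{(d,d+l)}$ is the law of $\sum_{j=d}^{d+l}\omega_j\mathbf{v}_j(\alpha)$. The $\omega_j$ with $j\ge 1$ are i.i.d., and $d\ge 1$ ensures that $\omega_0$ appears in neither sum. So the family $(\omega_{j+t})_{j=d}^{d+l}$ has the same joint law as $(\omega_j)_{j=d}^{d+l}$. Since $A_\alpha$ is linear, it will therefore suffice to prove the deterministic identity
\[
A_\alpha\mathbf{v}_j(\alpha)=\mathbf{v}_{j+1}(\alpha)\qquad\text{for all } j\ge 0,
\]
and then iterate it $t$ times. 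This gives
\[
(A_\alpha^t)\Bigl(\sum_{j=d}^{d+l}\omega_{j+t}\mathbf{v}_j(\alpha)\Bigr)=\sum_{j=d}^{d+l}\omega_{j+t}\mathbf{v}_{j+t}(\alpha),
\]
whose law is $\nu_\alpha^{(d+t,d+l+t)}$. On the other hand, the left-hand side is the pushforward under $A_\alpha^t$ of a random vector with law $\nu_\alpha^{(d,d+l)}$.

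The identity is checked coordinatewise; all operations are coordinatewise in $\mathcal{V}$, so we can work in a single field $\FF_{q_{j,l}}$. The $s$-th entry of $\mathbf{v}_j(\alpha)$ is $\df{j}{s}\alpha^{j-s}$, which is just the $s$-th derivative of $x^j$ evaluated at $\alpha$. Since $x^{j+1}=x\cdot x^j$, Leibniz's rule gives
\[
(x^{j+1})^{(s)}=x\,(x^j)^{(s)}+s\,(x^j)^{(s-1)}.
\]
Evaluating at $\alpha$ yields exactly $(A_\alpha\mathbf{v}_j(\alpha))^{(s)}=\alpha\,\mathbf{v}_j(\alpha)^{(s)}+s\,\mathbf{v}_j(\alpha)^{(s-1)}$. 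Equivalently, one can verify the falling-factorial identity $\df{j+1}{s}=\df{j}{s}+s\df{j}{s-1}$ directly.

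The main point needing care is the edge conventions. When $s>j$, the entry of $\mathbf{v}_j(\alpha)$ is interpreted as zero, and this is consistent because $\df{j}{s}=0$ in that range. The case $s=0$ uses the convention $v^{(-1)}=0$. Negative powers of $\alpha$ never occur with a nonzero coefficient, so the possibility $\alpha=0$ causes no trouble. The derivative identity holds over $\Z[x]$ and hence after reduction, so the characteristic of the field does not matter.
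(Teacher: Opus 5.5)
Your proof is correct and follows essentially the same route as the paper. Both arguments establish $A_\alpha\mathbf{v}_j(\alpha)=\mathbf{v}_{j+1}(\alpha)$ via $\df{j+1}{s}=\df{j}{s}+s\df{j}{s-1}$ (you phrase it as Leibniz's rule), iterate it $t$ times, and conclude from linearity and the fact that $(\omega_{j+t})_{j=d}^{d+l}$ has the same joint law as $(\omega_j)_{j=d}^{d+l}$ because $d\ge 1$.
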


\begin{proof}
For the operator $A_{\alpha}$ defined above, we have
\begin{align*}
  (A_{\alpha}\mathbf{v}_j(\alpha))^{(s)} &= \alpha(\mathbf{v}_j(\alpha))^{(s)} + s(\mathbf{v}_j(\alpha))^{(s-1)} = \df{j}{s}\alpha^{j-s+1} + s\df{j}{s-1}\alpha^{j-(s-1)} \\
  &= \df{j+1}{s}\alpha^{j+1-s} = (\mathbf{v}_{j+1}(\alpha))^{(s)}
\end{align*}
for every $j\ge 0$ and $0\le s\le r-1$. Therefore $A_{\alpha}\mathbf{v}_j(\alpha)=\mathbf{v}_{j+1}(\alpha)$, so by induction we have $A_{\alpha}^t\mathbf{v}_j(\alpha)=\mathbf{v}_{j+t}(\alpha)$ for any $t\ge 0$. Since $\omega_{d+t},\dots,\omega_{d+t+l}$ have the same joint law as $\omega_d,\dots,\omega_{d+l}$, then $\sum_{j=d+t}^{d+l+t}\omega_j\mathbf{v}_j(\alpha) = A_{\alpha}^t\sum_{j=d}^{d+l}\omega_{j+t}\mathbf{v}_j(\alpha)$ has the same law as $A_{\alpha}^t\sum_{j=d}^{d+l}\omega_j\mathbf{v}_j(\alpha)$, so the claim follows from \eqref{eq:expr-vj}.
\end{proof}

\subsection{Fourier coefficients}

We will now give a bound on the Fourier coefficients of~$\nu_{\alpha}^{(d,d+l)}$, which are indexed by $\mathcal{V}^r$. For $\xi\in\mathcal{V}^r$, we write $\xi_{j,l}=(\xi_{j,l}^{(0)},\dots,\xi_{j,l}^{(r-1)})\in \FF_{q_{j,l}}^r$ for its $(j,l)$-coordinate. In particular, $\xi_{j,l}\ne 0$ means that at least one of the coordinates $\xi_{j,l}^{(0)},\dots,\xi_{j,l}^{(r-1)}$ is nonzero.

Write $e_Q(x)=\exp(2\pi ix/Q)$ for every $x\in\Z/Q\Z$. It is well known and easy to see that the additive characters of $\mathcal{V}^r$ are given by
\[
\chi_{\xi}(v) = e_Q\left(\Psi\circ\tr(\left\langle\xi,v\right\rangle)\right)
\]
for any $\xi,v\in\mathcal{V}^r$, where
\[
\left\langle\xi,v\right\rangle \coloneqq \left(\sum_{t=0}^{r-1}\xi_{j,l}^{(t)}v_{j,l}^{(t)}\right)_{j,l}\in\mathcal{V}.
\]
and $\Psi$ is from (\ref{eq:def Psi}).

For each $v\in\mathcal{V}^r$, we write $\mu.\delta_v$ for the probability measure on $\mathcal{V}^r$ defined by $\sum_{a\in\Z}\mu(a)\delta_{av}$. Also, for $a\in\Z/Q\Z$, we write $[a]^{\sim}$ for its unique representative in $(-\frac{Q}{2},\frac{Q}{2}]$.

\begin{lem}\label{lem:fourier-mu-trans}
For any $v,\xi\in\mathcal{V}^r$,
\[
\left|\widehat{\mu.\delta_v}(\xi)\right| \le \exp\left(-\sum_{a_1,a_2\in\Z}\frac{\mu(a_1)\mu(a_2)([\Psi\circ\tr(\left\langle(a_1-a_2)\xi,v\right\rangle)]^{\sim})^2}{Q^2}
\right)
\]
\end{lem}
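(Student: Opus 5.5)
We compute the Fourier coefficient directly and then bound its modulus with an elementary inequality for cosines. By definition,
\[
\widehat{\mu.\delta_v}(\xi)=\sum_{a\in\Z}\mu(a)\chi_{\xi}(av)=\sum_{a\in\Z}\mu(a)\,e_Q\bigl(\Psi\circ\tr(\langle\xi,av\rangle)\bigr).
\]
The maps $v\mapsto\langle\xi,v\rangle$, $\tr$ and $\Psi$ are additive, so $\Psi\circ\tr(\langle\xi,av\rangle)=a\,\theta$ in $\Z/Q\Z$, where $\theta\coloneqq\Psi\circ\tr(\langle\xi,v\rangle)$. Likewise $\Psi\circ\tr(\langle(a_1-a_2)\xi,v\rangle)=(a_1-a_2)\theta$. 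Hence $\widehat{\mu.\delta_v}(\xi)=\sum_a\mu(a)e_Q(a\theta)$ is the characteristic function of $\mu$ evaluated at the frequency $\theta/Q$.

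Next we square the modulus and expand it as a double sum:
\[
\bigl|\widehat{\mu.\delta_v}(\xi)\bigr|^2=\sum_{a_1,a_2}\mu(a_1)\mu(a_2)\,e_Q\bigl((a_1-a_2)\theta\bigr)=\sum_{a_1,a_2}\mu(a_1)\mu(a_2)\cos\Bigl(\frac{2\pi x_{a_1,a_2}}{Q}\Bigr).
\]
Here $x_{a_1,a_2}=[(a_1-a_2)\theta]^{\sim}\in(-\tfrac Q2,\tfrac Q2]$. Replacing the residue by this representative does not change the exponential, and the sum is real because it is symmetric under swapping $a_1$ and $a_2$.

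The key elementary inequality is $\cos(2\pi t)\le 1-8t^2$ for $|t|\le\tfrac12$. This follows from $1-\cos(2\pi t)=2\sin^2(\pi t)$ together with Jordan's inequality $|\sin(\pi t)|\ge 2|t|$ on $[-\tfrac12,\tfrac12]$. Applying it with $t=x_{a_1,a_2}/Q$ and using $\sum\mu(a_1)\mu(a_2)=1$ gives
\[
|\widehat{\mu.\delta_v}(\xi)|^2\le 1-8S, \qquad S\coloneqq\sum_{a_1,a_2}\mu(a_1)\mu(a_2)\frac{x_{a_1,a_2}^2}{Q^2}.
\]
Then $1-y\le e^{-y}$ yields $|\widehat{\mu.\delta_v}(\xi)|\le e^{-4S}$, which is at most $e^{-S}$, the claimed bound.

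The only points needing care are the additivity of $\Psi\circ\tr$ under integer scaling, which identifies the arguments, and the fact that $\Psi$ and $\tr$ are well defined modulo $Q$, so that the representative $[\cdot]^{\sim}$ is legitimate. Neither is a real obstacle.
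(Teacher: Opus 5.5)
Your proof is correct and is essentially the paper's argument. You expand $|\widehat{\mu.\delta_v}(\xi)|^2$ as the real double sum $\sum_{a_1,a_2}\mu(a_1)\mu(a_2)\chi_\xi((a_1-a_2)v)$, bound each term by a quadratic in the centered representative, and finish with $1-y\le e^{-y}$; the only difference is that you use the sharp inequality $\cos(2\pi t)\le 1-8t^2$ where the paper uses the weaker $1-2t^2$, which gives you $e^{-4S}$ and hence the claimed bound $e^{-S}$.
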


\begin{proof}
Since $\widehat{\mu.\delta_v}(\xi)=\sum\mu(s)\xi_\zeta(av)$ and since $\mu$ is real we get
\[
\left|\widehat{\mu.\delta_v}(\xi)\right|^2 = \sum_{a_1,a_2\in\Z}\mu(a_1)\mu(a_2)\chi_{\xi}((a_1-a_2)v).
\]
As $\left|\widehat{\mu.\delta_v}(\xi)\right|^2\in\R$ and $\Re(e_Q(a)) \le 1-2([a]^{\sim})^2/Q^2$ if $a\in\Z/Q\Z$, we may use the definition of $\chi_{\xi}$ and deduce
\begin{align*}
\left|\widehat{\mu.\delta_v}(\xi)\right|^2 & \le 1 - 2\sum_{a_1,a_2\in\Z}\frac{\mu(a_1)\mu(a_2)([\Psi\circ\tr(\left\langle\xi,(a_1-a_2)v\right\rangle)]^{\sim})^2}{Q^2} \\
&= 1 - 2\sum_{a_1,a_2\in\Z}\frac{\mu(a_1)\mu(a_2)([\Psi\circ\tr(\left\langle(a_1-a_2)\xi,v\right\rangle)]^{\sim})^2}{Q^2}.
\end{align*}
However, for any $t$ we have $1-t\le\exp(-t)$, so the claim follows.
\end{proof}

For $\xi\in\mathcal{V}^r$, write
\[
S_j(\xi) \coloneqq \Psi\circ\tr(\left\langle\xi,\mathbf{v}_j(\alpha)\right\rangle)\in\Z/Q\Z.
\]

\begin{cor}\label{cor:fourier-nu-bound}
For every $d\ge 1$, every $l\ge 0$, and every $\xi\in\mathcal{V}^r$,
\[
\left|\whnu{_{\alpha}^{(d,d+l)}}(\xi)\right| \le \exp\left(-\sum_{a_1,a_2\in\Z}\left(\mu(a_1)\mu(a_2)\sum_{j=d}^{d+l}\frac{([S_j((a_1-a_2)\xi)]^{\sim})^2}{Q^2}
\right)\right)
\]
\end{cor}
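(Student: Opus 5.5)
The bound follows by writing $\nu_{\alpha}^{(d,d+l)}$ as a convolution of independent pieces and applying \Lref{lem:fourier-mu-trans} to each factor. By \eqref{eq:expr-vj}, the vector $(R_{d,d+l}(\alpha),\dots,R_{d,d+l}^{(r-1)}(\alpha))$ equals $\sum_{j=d}^{d+l}\omega_j\mathbf{v}_j(\alpha)$. Since $d\ge 1$, the coefficients $\omega_d,\dots,\omega_{d+l}$ are i.i.d.\ with law $\mu$, and the constant coefficient $\omega_0$, whose law may differ, never appears. Hence
\[
\nu_{\alpha}^{(d,d+l)} = (\mu.\delta_{\mathbf{v}_d(\alpha)}) * \cdots * (\mu.\delta_{\mathbf{v}_{d+l}(\alpha)}).
\]
The Fourier transform of a convolution is the product of the Fourier transforms, so
\[
\left|\whnu{_{\alpha}^{(d,d+l)}}(\xi)\right| = \prod_{j=d}^{d+l}\left|\widehat{\mu.\delta_{\mathbf{v}_j(\alpha)}}(\xi)\right|.
\]

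Next I apply \Lref{lem:fourier-mu-trans} to each factor with $v=\mathbf{v}_j(\alpha)$. This bounds the $j$-th factor by
\[
\exp\left(-\sum_{a_1,a_2}\mu(a_1)\mu(a_2)\,\frac{([\Psi\circ\tr(\langle(a_1-a_2)\xi,\mathbf{v}_j(\alpha)\rangle)]^{\sim})^2}{Q^2}\right).
\]
By the definition of $S_j$, the quantity $\Psi\circ\tr(\langle(a_1-a_2)\xi,\mathbf{v}_j(\alpha)\rangle)$ is exactly $S_j((a_1-a_2)\xi)$. The integer scalar $a_1-a_2$ acts on $\xi\in\mathcal{V}^r$ coordinatewise, and pairing with $v$ is bilinear, so moving the scalar onto $\xi$ is consistent with how the lemma is stated.

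Finally I multiply the exponentials, which adds the exponents. The sums over $j$ and over $(a_1,a_2)$ are both finite, since $\mu$ is finitely supported, so they can be interchanged. This gives precisely the claimed exponent.

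Every step here is a formal manipulation, so there is no real obstacle. The only points needing care are the convolution identity (independence of the coefficients and $d\ge 1$) and the interchange of the two finite sums.
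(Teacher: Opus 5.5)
Your proposal is correct and follows the paper's proof exactly. You write $\nu_{\alpha}^{(d,d+l)}$ as the convolution of the measures $\mu.\delta_{\mathbf{v}_j(\alpha)}$, take the product of their Fourier transforms, apply \Lref{lem:fourier-mu-trans} to each factor, and add the exponents; your remarks on independence for $d\ge 1$ and on interchanging the finite sums are sound.
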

($\whnu{_{\alpha}^{(d,d+l)}}$ is the Fourier transform of $\nu_{\alpha}^{(d,d+l)}$, for graphical reasons we do not put the hat over the entire expression).
\begin{proof}
The Fourier transform of a convolution is the product of the Fourier transforms, so
\[
\left|\whnu{_{\alpha}^{(d,d+l)}}(\xi)\right| = \left|\prod_{j=d}^{d+l}\widehat{\mu.\delta_{\mathbf{v}_j(\alpha)}}(\xi)\right|.
\]
We now apply \Lref{lem:fourier-mu-trans} to get
\[
\left|\whnu{_{\alpha}^{(d,d+l)}}(\xi)\right| \le \exp\left(-\sum_{a_1,a_2\in\Z}\sum_{j=d}^{d+l}\frac{\mu(a_1)\mu(a_2)([S_j((a_1-a_2)\xi)]^{\sim})^2}{Q^2}
\right)
\]
proving the corollary.
\end{proof}

\subsection{Estimates for the Fourier coefficients}

Our next goal is to give a uniform bound on the factor
\[
\sum_{j=d}^{d+l}([S_j((a_1-a_2)\xi)]^{\sim})^2
\]
appearing in \Cref{cor:fourier-nu-bound} for generic $\alpha\in\mathcal{V}$. For ease of notation, we think of $a_1,a_2,\xi$ as fixed, and write $\beta=(a_1-a_2)\xi$. We further write $S_j=S_j(\beta)$ and $\widetilde{S_j}=[S_j]^{\sim}$.

The proof uses a method of Konyagin \cite{Konyagin92} (see also \cite[Section 5]{BreuillardVarju19}). However, since our finite fields are not necessarily prime, and since we also include the derivatives of the polynomial, we need the following generalization of \cite[Lemma 29]{BreuillardVarju19}:

\begin{lem}\label{lem:apply-pol}
  Let $p>r$ be a prime, let $q_1=p^{k_1},\dots,q_t=p^{k_t}$ be powers of $p$, and for each $1\le l\le t$ let $\alpha_l\in\FF_{p^{k_l}}$ be generic (in the sense of Definition \ref{def:generic}). %
  For every $1\le l\le t$, let $0\ne\beta_l=(\beta_l^{(0)},\dots,\beta_l^{(r-1)})\in\FF_{q_l}^r$.

Let $P(x)=b_0+\cdots+b_ex^e\in\Z[x]$ be a polynomial such that
\begin{equation}\label{eq:pol-apply}
  \sum_{i=0}^e b_i \sum_{l=1}^t \tr_{\FF_{q_l}/\FF_p}(\left\langle\beta_l,\mathbf{v}_{i+m}(\alpha_l)\right\rangle) = 0
\end{equation}
in $\FF_p$ for every $0\le m\le r\sum_{l=1}^tk_l - 1$. Then $P(\alpha_l)=0$ for every $1\le l\le t$, and \eqref{eq:pol-apply} holds for all $m\ge 0$.
\end{lem}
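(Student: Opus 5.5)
The plan is to read \eqref{eq:pol-apply} as a statement about a linear recurrence sequence over $\FF_p$ and to use the structure of its annihilator ideal. Set $W=\bigoplus_{l=1}^t\FF_{q_l}^r$, an $\FF_p$-vector space of dimension $N\coloneqq r\sum_l k_l$. Let $A$ be the operator of \Lref{lem:linear-transform} acting coordinatewise with $\alpha_l$ on the $l$-th summand, so that $A\mathbf{v}_j(\alpha_l)=\mathbf{v}_{j+1}(\alpha_l)$ and $\mathbf{v}_m=A^m\mathbf{v}_0$ with $\mathbf{v}_0=(1,0,\dots,0)$ in each summand. Let $L\colon W\to\FF_p$ be the $\FF_p$-linear functional $w\mapsto\sum_l\tr_{\FF_{q_l}/\FF_p}(\langle\beta_l,w_l\rangle)$, and put $u_m=L(A^m\mathbf{v}_0)$. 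Reducing $P$ modulo $p$, the hypothesis says that $w_m\coloneqq\sum_i b_iu_{i+m}=L(P(A)A^m\mathbf{v}_0)$ vanishes for $0\le m\le N-1$.

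\emph{Step 1: the second conclusion.} By Cayley--Hamilton, $A$ satisfies its characteristic polynomial $\chi$, which has degree $N$. Hence the sequence $(w_m)$ satisfies a linear recurrence of order $N$ with leading coefficient $1$. Its first $N$ terms vanish, so it vanishes identically, and \eqref{eq:pol-apply} holds for all $m\ge0$. Equivalently, $\bar P$ lies in the annihilator ideal $I=\{h\in\FF_p[x]:\sum h_iu_{i+m}=0\ \forall m\}$ of the sequence $u$.

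\emph{Step 2: decomposition.} Write $u=\sum_l u^{(l)}$ with $u^{(l)}_m=\tr(\langle\beta_l,\mathbf{v}_m(\alpha_l)\rangle)$. Let $f_l$ be the minimal polynomial of $\alpha_l$ over $\FF_p$. On the $l$-th summand, $A=\alpha_l I+(\text{nilpotent of index}\le r)$, so $f_l^r(A)=0$ there and $f_l^r$ annihilates $u^{(l)}$. Genericity (nonzero, pairwise non-conjugate, each $\alpha_l$ of full degree) makes the $f_l$ pairwise distinct monic irreducibles, hence coprime. Therefore the minimal annihilator of $u$ is the product of the minimal annihilators of the $u^{(l)}$, each of which has the form $f_l^{e_l}$ with $0\le e_l\le r$; this is the standard CRT decomposition of the $\FF_p[x]$-module of recurrent sequences. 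So it suffices to show $e_l\ge1$, i.e.\ that $u^{(l)}$ is not identically zero. Then $f_l\mid\bar P$, which gives $P(\alpha_l)=0$.

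\emph{Step 3: non-vanishing, the key point.} Suppose $u^{(l)}\equiv0$. Then $\tr(\langle\beta_l,h(A)\mathbf{v}_0\rangle)=0$ for every $h\in\FF_p[x]$, and $h(A)\mathbf{v}_0(\alpha_l)=(h(\alpha_l),h'(\alpha_l),\dots,h^{(r-1)}(\alpha_l))$. I will show these vectors exhaust $\FF_{q_l}^r$.
\begin{itemize}[label={}]
\item The map $h\mapsto(h^{(s)}(\alpha_l)/s!)_{s<r}$, which is defined because $p>r$, is the Taylor-expansion map $\FF_p[x]/(f_l^r)\to\FF_{q_l}[t]/(t^r)$ at $\alpha_l$.
\item It is injective: if it sends $h$ to zero, then $(x-\alpha_l)^r\mid h$ over $\FF_{q_l}$, hence $f_l^r\mid h$ over $\FF_p$, since $f_l$ is separable and every root of $f_l$ is a Galois conjugate of $\alpha_l$.
\item Both sides have $\FF_p$-dimension $rk_l$, so the map is onto. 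Rescaling the coordinates by the invertible factorials preserves surjectivity.
\end{itemize}
Thus $\tr(\langle\beta_l,w\rangle)=0$ for all $w\in\FF_{q_l}^r$. By nondegeneracy of the trace form this forces $\beta_l=0$, contradicting the assumption. I expect this surjectivity step (Taylor isomorphism plus $p>r$) to be the only real obstacle; the rest is routine linear-recurrence algebra.
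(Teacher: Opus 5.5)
Your proof is correct, but it is organized differently from the paper's.

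\textbf{The paper's route.} The paper uses a single explicit computation. It expands $\sum_i b_i\mathbf{v}_{i+m}(\alpha_l)$ into terms involving $P^{(s-u)}(\alpha_l)$ and writes each trace as a sum over the Frobenius conjugates $\alpha_l^{p^j}$. The $r\sum_l k_l$ equations then form a square linear system in the unknowns
\[
x_{l,u,j}=\sum_{s\ge u}\binom{s}{u}(\beta_l^{(s)})^{p^j}P^{(s-u)}(\alpha_l^{p^j}).
\]
Its coefficient matrix is a confluent Vandermonde matrix in the conjugates $\alpha_l^{p^j}$. Genericity makes these nodes distinct, so every $x_{l,u,j}$ vanishes. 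Both conclusions follow at once: the equation for any $m$ is a combination of the $x_{l,u,j}$, and for $u$ maximal with $\beta_l^{(u)}\neq 0$ one has $x_{l,u,0}=\beta_l^{(u)}P(\alpha_l)$.

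\textbf{Your route.} You separate the two conclusions. Cayley--Hamilton on the $\FF_p$-space $W$ of dimension $r\sum_l k_l$ gives the extension to all $m$. For $P(\alpha_l)=0$, you combine the primary (CRT) decomposition of the annihilator ideal, surjectivity of the Taylor map $\FF_p[x]/(f_l^r)\to\FF_{q_l}[t]/(t^r)$, and nondegeneracy of the trace form. This replaces both the determinant and the ``top nonzero coordinate of $\beta_l$'' trick.

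\textbf{What each route buys.} Yours avoids the confluent Vandermonde determinant altogether. It shows that the ``for all $m$'' statement needs no genericity at all. It also pinpoints which parts of genericity are used: distinct minimal polynomials give coprimality, full degree $k_l$ gives the dimension count, and $\alpha_l\neq 0$ plays no role in this lemma. The paper's route is more hands-on. The vanishing of all $x_{l,u,j}$ also yields, by induction on $v$ using $p>r$, the stronger fact that $P^{(v)}(\alpha_l)=0$ for every $v\le u$. This is not needed later.
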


\begin{proof}
We first note that for any $m\ge 0$ and any $0\le s\le r-1$,
\begin{align*}
  \left(\sum_{i=0}^e b_i\mathbf{v}_{i+m}(\alpha_l)\right)_s &= \sum_{i=0}^e \df{i+m}{s} b_i\alpha_l^{i+m-s} \\
  &= \sum_{i=0}^e \sum_{u=0}^s \binom{s}{u}\df{i}{s-u}(m)_u b_i \alpha_l^{i+m-s} \\
  &= \sum_{u=0}^s \binom{s}{u}\df{m}{u} \alpha_l^{m-u} \sum_{i=0}^e  \df{i}{s-u}b_i\alpha_l^{i-(s-u)} \\
  &= \sum_{u=0}^s \binom{s}{u}\df{m}{u} \alpha_l^{m-u} P^{(s-u)}(\alpha_l).
\end{align*}
For each $s\in\{0,\dotsc,r-1\}$ we use this to rewrite \eqref{eq:pol-apply} and sum over $s$ to get
\[
\sum_{l=1}^t \sum_{s=0}^{r-1} \sum_{u=0}^s \binom{s}{u}\df{m}{u} \tr_{\FF_{q_l}/\FF_p}\left(\beta_l^{(s)}\alpha_l^{m-u} P^{(s-u)}(\alpha_l)\right) = 0,
\]
namely
\[
\sum_{l=1}^t \sum_{s=0}^{r-1} \sum_{u=0}^s \sum_{j=0}^{k_l-1} \binom{s}{u}\df{m}{u} (\beta_l^{(s)})^{p^j}\alpha_l^{p^j(m-u)} P^{(s-u)}(\alpha_l^{p^j}) = 0.
\]
Changing the order of summation of the last equation, we may rewrite it as
\begin{equation}\label{eq:pol-apply2}
  \sum_{l=1}^t  \sum_{u=0}^{r-1} \sum_{j=0}^{k_l-1} \df{m}{u} \alpha_l^{p^j(m-u)} x_{l,u,j}= 0
\end{equation}
where $x_{l,u,j} = \sum_{s=u}^{r-1} \binom{s}{u} (\beta_l^{(s)})^{p^j} P^{(s-u)}(\alpha_l^{p^j})$ for all $l,u,j$.

We think of $\set{x_{l,u,j}}$ as variables, and put the coefficients of \eqref{eq:pol-apply2} for all $m=0,\dots,r\sum_{l=1}^t k_l-1$ in the columns of a square matrix $A$. Then $A$ is the confluent Vandermonde matrix corresponding to $\big\{\alpha_l^{p^j}\big\}$ of order $r$, and thus its determinant is a nonzero scalar multiple of $\prod_{(l,j)<(l',j')}\big(\alpha_l^{p^j} - \alpha_{l'}^{p^{j'}}\big)^{r^2}$. In particular, the matrix is non-singular thanks to the assumptions on $\alpha_1,\dots,\alpha_t$ (in particular, the fact that $\alpha$ is generic). This forces $x_{l,u,j}=0$ for all $l,u,j$.

Fix $1\le l\le t$, and let $u$ be the maximal index so that $\beta_l^{(u)}\ne 0$. Then $x_{l,u,0} = \beta_l^{(u)}P(\alpha_l)=0$, so $P(\alpha_l)=0$. Moreover, \eqref{eq:pol-apply} holds for all $m\ge 0$ since $x_{l,u,j}=0$ for all $l,u,j$, completing the proof.
\end{proof}

\begin{prop}\label{prop:Sj-tilde-bound}
There exist universal constants $\Ckon,\Cord>0$ such that the following holds. Let $\alpha\in\mathcal{V}$ and $\beta\in\mathcal{V}^r$, such that $\alpha$ is generic. %

Let $L\ge \Ckon\log Q^{rD}(\log\log Q^{rD})^4$ be an integer, and suppose that
\[
\sum_{j=0}^L\widetilde{S_j}^2 \le \frac{Q^2}{8\log(4L)}.
\]
Then, for every $j,l$ such that $\beta_{j,l}\ne 0$, the element $\alpha_{j,l}$ has multiplicative order smaller than $\Cord\log Q^{rD}\log\log\log Q^{rD}$.
\end{prop}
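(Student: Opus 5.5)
This is Konyagin's argument as in \cite[Section 5]{BreuillardVarju19}, with \Lref{lem:apply-pol} replacing \cite[Lemma 29]{BreuillardVarju19}. The idea is to construct a low-height integer recurrence for the lifted sequence $\widetilde{S_j}$, transfer it to $\FF_{p_j}$ to get a polynomial vanishing at $\alpha_{j,l}$, and then use the smallness of the $\widetilde{S_j}$ to force that polynomial to divide a cyclotomic-type relation. That relation bounds the multiplicative order.

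\textbf{Step 1: few large values.} The hypothesis $\sum_{j\le L}\widetilde{S_j}^2\le Q^2/(8\log 4L)$ gives $|\widetilde{S_j}|\le Q/\sqrt{8\log(4L)}$ for every $j$. So all $\widetilde{S_j}$ are small compared with $Q$; by Chebyshev, only few of them can be of size comparable to $Q/\sqrt{\log L}$.

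\textbf{Step 2: a short integer recurrence.} Set $K=rD$ and $N\approx\log Q^{K}$. By a Siegel/pigeonhole argument, find a nonzero $P(x)=\sum_{i=0}^e b_ix^e$ with $e\approx C\log Q^{rD}$ and small coefficients $|b_i|\le B$ such that the integer sums
\[
T_m=\sum_i b_i\widetilde{S}_{i+m}
\]
vanish modulo $Q$ for $0\le m<rD$. Actually the efficient route is the one in \cite{Konyagin92}. Choose $P$ so that $T_m\equiv 0\pmod Q$ for a block of $K$ consecutive $m$'s. Then $|T_m|\le \norm{b}_2\,(\sum_i\widetilde S_{i+m}^2)^{1/2}<Q$ for many $m$, so $T_m=0$ as integers.

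\textbf{Step 3: transfer to the field.} Since $\Psi$ is injective with $\psi_j$ supported on the $p_j$-component, $T_m\equiv0\pmod Q$ implies, for each $j$, the relation \eqref{eq:pol-apply} in $\FF_{p_j}$ for $0\le m<r\sum_l k_{j,l}$. For each $j$, restrict to the indices $l$ with $\beta_{j,l}\neq0$. The genericity of $\alpha$ and $p_j>r$ then let me apply \Lref{lem:apply-pol}, which gives $P(\alpha_{j,l})=0$ for all such $j,l$ and the relation for all $m\ge0$.

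\textbf{Step 4: exact recurrence and the order bound.} With the relation now holding for all $m$, the integer sequence $\widetilde S_m$ satisfies $\sum_i b_i\widetilde S_{i+m}=0$ exactly for a long stretch. This is the only thing combined with the $\ell^2$ smallness from Step 1. This is where the $\log(4L)$ enters, via a Turán/Nazarov-type lemma or Konyagin's lemma on exponential sums. The conclusion should be that the characteristic roots relevant to $\widetilde S$ are roots of unity of small order, or equivalently that $P$ may be replaced by a product of cyclotomic polynomials $\Phi_d$ with $\sum_d\varphi(d)\lesssim \log Q^{rD}$. Since $P(\alpha_{j,l})=0$, $\alpha_{j,l}$ is a root of some $\Phi_d$ with $\varphi(d)\ll\log Q^{rD}$. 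This gives $d\ll \log Q^{rD}\log\log\log Q^{rD}$, using $\varphi(d)\gg d/\log\log d$.

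\textbf{Main obstacle.} Step 4 is the hard part: converting ``small $\ell^2$ mass over a window of length $L$ plus a linear recurrence'' into ``all roots are roots of unity of small order.'' This is where $L\ge \Ckon\log Q^{rD}(\log\log Q^{rD})^4$ is needed. I would follow the corresponding lemma of \cite[Section 5]{BreuillardVarju19} essentially verbatim. That lemma works over the integer sequence $\widetilde S$ and never uses the field structure, so only its input dimension changes from $\log Q$ to $\log Q^{rD}$.
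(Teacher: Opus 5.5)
There is a genuine gap in your Step 2, at exactly the point where the hypothesis has to be used. The bound $|T_m|\le\norm{b}_2(\sum_i\widetilde{S}_{i+m}^2)^{1/2}<Q$ does not follow.

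For the pigeonhole to give a nonzero $b$ with $T_m\equiv0\pmod Q$ for $0\le m<rD$, you need more than $Q^{rD}$ candidate vectors. With coefficients bounded by $B$ this forces $(e+1)\log(B+1)\gtrsim\log Q^{rD}$, and the pigeonhole gives no control on $\norm{b}_2$ beyond roughly $B\sqrt{e+1}$. Cauchy--Schwarz then yields only $|T_m|\lesssim Q\sqrt{B^2(e+1)/\log(4L)}$. For every $B\ge1$ we have $B^2(e+1)\gtrsim\log Q^{rD}$. In the relevant range $L\approx\Ckon\log Q^{rD}(\log\log Q^{rD})^4$ we have $\log(4L)\asymp\log\log Q^{rD}$, so this bound is far larger than $Q$.

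The missing idea is to pigeonhole inside a set of sign vectors selected by Hoeffding's inequality. With $E=3\floor{\log Q^{rD}}$, let $\Omega$ be the set of $\sigma\in\{\pm1\}^{E+1}$ with $|\sum_{j=0}^E\sigma_j\widetilde{S_{j+m}}|<Q/2$ for \emph{every} $0\le m\le L-E$. Under the hypothesis, Hoeffding bounds each failure probability by $2\exp(-\log(4L))=1/(2L)$. A union bound over the at most $L$ shifts then gives $|\Omega|\ge2^E>Q^{rD}$. Pigeonholing inside $\Omega$ produces $\sigma\ne\sigma'$ and $P_1(X)=\sum_j\frac{\sigma_j-\sigma'_j}{2}X^j$ with $\{0,\pm1\}$ coefficients. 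Your Step 3 (via \Lref{lem:apply-pol}, which is fine) upgrades the congruences to all $m$. The defining property of $\Omega$ then forces the integer sums to vanish for all $0\le m\le L-E$, i.e.\ $P_1\in\Lambda_E(\{\widetilde{S_j}\}_{j=0}^L)$. So the $\log(4L)$ is consumed by this union bound, not by a Tur\'an/Nazarov-type lemma in Step 4.

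Step 4 is a pointer rather than an argument, and the mechanism you describe is not what makes the proof work. An exact recurrence for some $P$ with $P(\alpha_{j,l})=0$ says nothing by itself about roots of unity. Three further ingredients are needed.

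(i) \Lref{lem:approx-ideal} gives a primitive $P_0$ of minimal degree in $\Lambda(\{\widetilde{S_j}\}_{j=0}^{\ceil{2L/3}})$ that divides all its low-degree elements. Hence $P_0\mid P_1$, $\deg P_0\le E$, and (by \Lref{lem:apply-pol} again) $P_0(\alpha_{j,l})=0$ whenever $\beta_{j,l}\ne0$.

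(ii) A second run of the same Hoeffding--pigeonhole argument produces a lacunary $P_2(X)=\widetilde P_2(X^\ell)\in\Lambda_{E\ell}(\{\widetilde{S_j}\}_{j=0}^L)$ with $\{0,\pm1\}$ coefficients and $\deg\widetilde P_2\le E$. Here $\ell\in(s,2s]$, $s=\floor{L/6E}$, is a prime such that no ratio of two roots of the irreducible factor $P_{j,l}\mid P_0$ vanishing at $\alpha_{j,l}$ is an $\ell$-th root of unity. Then $P_{j,l}\mid P_0\mid P_2$ gives $M(P_{j,l})^\ell\le M(\widetilde P_2)\le\sqrt{E+1}$.

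(iii) Dobrowolski's theorem applies because $\ell\gg L/E\gg(\log\log Q^{rD})^4$; this is where the fourth power in the lower bound on $L$ enters. It forces $P_{j,l}$ to be cyclotomic, and only then does $\varphi(d)\le E$ give the order bound.

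Note that (ii) genuinely requires $P_0$ rather than your $P$ or $P_1$. An irreducible factor of $P_1$ whose reduction vanishes at $\alpha_{j,l}$ need not divide $P_2$.
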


Before the proof, we recall some notations and facts from \cite{Konyagin92,BreuillardVarju19}. Let $N\ge e\ge 0$ be integers, and let $X=(x_0,\dots,x_N)$ be a sequence of integers. We write $\Lambda_e(X)$ for the `approximate ideal of $X$' i.e.\ the set of polynomials $P(x)=a_0+a_1x+\cdots+a_ex^e\in\Z[x]$ of degree at most $e$ such that
\[
a_0x_j + \cdots + a_ex_{j+e} = 0 \qquad \text{ for all } 0\le j\le N-e.
\]
We further denote by $\Lambda(X)$ the set of polynomials $P\in\Z[x]$ of degree at most $N$ such that $P\in\Lambda_{\deg P}(X)$. The following lemma is \cite[Corollary 28]{BreuillardVarju19}.

\begin{lem}\label{lem:approx-ideal} If $\Lambda(X)$ contains a nonzero polynomial of degree at most $\frac{N}{2}$, then there is a unique (up to multiplication by $\pm 1$) polynomial $P_0$ in $\Lambda(X)$ of minimal degree and with relatively prime coefficients. In this case, any polynomial in $\Lambda(X)$ of degree at most $N-\deg P_0$ must be divisible by $P_0$.
\end{lem}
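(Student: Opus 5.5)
Although this is \cite[Corollary 28]{BreuillardVarju19}, I would reprove it directly from the definitions. The argument is a Euclidean-division argument in which the minimal element $P_0$ acts as a linear recurrence for the sequence $X$.

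First I would record the structural properties of the sets $\Lambda_e(X)$.
\begin{enumerate}
\item Each $\Lambda_e(X)$ is a subgroup of $\Z[x]$, because the defining relations are linear and homogeneous. It is also saturated: if $cP\in\Lambda_e(X)$ with $0\ne c\in\Z$, then $P\in\Lambda_e(X)$.
\item $\Lambda_e(X)\subseteq\Lambda_{e+1}(X)$, since membership in $\Lambda_{e+1}(X)$ asks for the same relations only over the smaller range $0\le j\le N-e-1$.
\item If $P\in\Lambda_e(X)$ then $x^iP\in\Lambda_{e+i}(X)$. Indeed, the relation for $x^iP$ at index $j$ is exactly the relation for $P$ at index $j+i$, and $j\le N-e-i$ is equivalent to $j+i\le N-e$.
\end{enumerate}
Existence of $P_0$ then follows: take a nonzero element of $\Lambda(X)$ of minimal degree $d$, which satisfies $d\le N/2$ by hypothesis, and divide it by its content; by saturation it stays in $\Lambda_d(X)$.

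The main step is divisibility. Let $F\in\Lambda(X)$ with $f=\deg F\le N-d$. Minimality gives $f\ge d$. Over $\Q$ I would divide with remainder and clear denominators, obtaining
\[
cF=AP_0+B,\qquad 0\ne c\in\Z,\quad A,B\in\Z[x],\quad \deg A=f-d,\quad \deg B<d.
\]
By (2) and (3), each monomial multiple $x^iP_0$ with $i\le f-d$ lies in $\Lambda_{d+i}(X)\subseteq\Lambda_f(X)$. Hence $AP_0\in\Lambda_f(X)$, and therefore $B=cF-AP_0\in\Lambda_f(X)$.

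This is the expected obstacle: $B$ is only known to satisfy its relations on the short range $j\le N-f$, whereas we need them for $j\le N-\deg B$ to conclude $B\in\Lambda(X)$. To extend the range I would set
\[
y_j=\sum_i b_i x_{j+i},\qquad 0\le j\le N-\deg B,
\]
where $B=\sum_i b_i x^i$. Interchanging sums shows that $(y_j)$ satisfies the order-$d$ recurrence with coefficients those of $P_0$, at least for $j\le N-d-\deg B$, because $P_0\in\Lambda_d(X)$. Every index that occurs is at most $N-\deg B$.

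Since $f\le N-d$, we have $y_0=\cdots=y_{d-1}=0$. Running the recurrence forward, solving for $y_{j+d}$ using the nonzero leading coefficient of $P_0$ over $\Q$, gives $y_j=0$ for all $j\le N-\deg B$. So $B\in\Lambda_{\deg B}(X)\subseteq\Lambda(X)$, and since $\deg B<d$, minimality forces $B=0$. Thus $cF=AP_0$, and Gauss's lemma together with primitivity of $P_0$ gives $P_0\mid F$ in $\Z[x]$.

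Finally, uniqueness follows by applying divisibility to any other primitive element $P_1$ of minimal degree $d$, which is allowed because $d\le N/2$ gives $d\le N-d$. Then $P_0\mid P_1$ with equal degrees and both primitive, so $P_1=\pm P_0$.
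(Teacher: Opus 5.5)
Your proof is correct. The paper gives no argument for this lemma; it imports it verbatim as \cite[Corollary 28]{BreuillardVarju19}, where it is stated as a corollary. So your write-up is a self-contained replacement rather than a reconstruction of something in the text.

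Your structural facts (1)--(3) are right, and the pseudo-division $cF=AP_0+B$ correctly puts $B$ in $\Lambda_f(X)$. The crux is your range-extension step. There you use $P_0\in\Lambda_d(X)$ as an order-$d$ recurrence for $y_j=\sum_i b_ix_{j+i}$, valid for $0\le j\le N-d-\deg B$. This range is nonempty because $\deg B<d\le f\le N-d$. The $d$ initial zeros come from $B\in\Lambda_f(X)$ together with $N-f\ge d$, and running the recurrence forward then gives exactly $B\in\Lambda_{\deg B}(X)$.

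What your route buys is transparency about where each hypothesis enters:
\begin{enumerate}
\item The bound $\deg F\le N-\deg P_0$ is used exactly twice: to get $x^iP_0\in\Lambda_f(X)$ for $i\le f-d$, and to get $y_0=\dots=y_{d-1}=0$.
\item Integrality enters only through saturation and Gauss's lemma.
\item The hypothesis that a nonzero element of degree at most $N/2$ exists is needed only for uniqueness, so that divisibility can be applied to a second primitive minimizer $P_1$.
\end{enumerate}
The citation, by contrast, keeps the paper short and ties this step to the Breuillard--Varj\'u setup of Konyagin's method. The paper's adaptation of that method also takes other ingredients from there, such as the lemma producing the prime $\ell\in(s,2s]$.
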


\begin{proof}[Proof of Proposition \ref{prop:Sj-tilde-bound}]
We set $E=3\floor{\log Q^{rD}}$, so that, after increasing $\Ckon$ if necessary,
\[
1\le E\le \frac{1}{3}L,\qquad 2^E>Q^{rD},\qquad\text{and}\qquad \floor{L/6E}\ge 4\log E.
\]
We will first show that there exists a nonzero polynomial $P_1\in\Lambda\big(\{\widetilde{S_j}\}_{j=0}^L\big)$ of degree at most $E$.

We consider the set
\[
\Omega\coloneqq\set{(\sigma_0,\dots,\sigma_E)\in\set{\pm 1}^{E+1} \suchthat \left|\sum_{j=0}^E\sigma_j\widetilde{S_{j+m}}\right| < \frac{Q}{2}\text{ for every }0\le m\le L-E}.
\]
Let $\sigma_0,\dots,\sigma_E$ be independent, unbiased $\pm 1$-valued random variables. By Hoeffding's inequality and our assumption, for any $0\le m\le L-E$ we have
\[
\Pr\left(\left|\sum_{j=0}^E\sigma_j\widetilde{S_{j+m}}\right| \ge \frac{Q}{2}\right) \le 2\exp\left(-\frac{(Q/2)^2}{2\sum_{j=0}^E\widetilde{S_{j+m}}^2}\right) \le \frac{1}{2L}.
\]
Therefore $|\Omega|\ge \frac{1}{2}2^{E+1}>Q^{rD}$. By the pigeonhole principle, there exist distinct $x,y\in\Omega$ such that
\[
\sum_{j=0}^E x_j\widetilde{S_{j+m}} = \sum_{j=0}^E y_j\widetilde{S_{j+m}}
\]
for all $0\le m\le rD-1$. Put $a_j=\frac{x_j-y_j}{2}\in\set{0,\pm 1}$. Then the coefficients $a_j$ are not all zero, and
\begin{equation}\label{eq:sum-aj-Sj}
\sum_{j=0}^E a_j\widetilde{S_{j+m}}=0
\end{equation}
for all $0\le m\le rD-1$.

We now apply Lemma~\ref{lem:apply-pol} separately for each prime $p_j$, using only those coordinates $l$ for which $\beta_{j,l}\ne 0$. Since $\alpha$ is generic, the hypotheses of \Lref{lem:apply-pol} hold for these primes. As for the hypothesis \eqref{eq:pol-apply}, we take \eqref{eq:sum-aj-Sj} modulo $Q$ (i.e.\ remove the tildes) and use the fact that the $p_j$ are distinct, hence $\Psi$ is one-to-one. This shows that \eqref{eq:pol-apply} holds and we conclude from Lemma~\ref{lem:apply-pol} that
\[
\sum_{j=0}^E a_jS_{j+m}=0
\]
in $\Z/Q\Z$ for every $m\ge 0$.

Since $x,y\in\Omega$, we can return the tildes (by the definition of $\Omega$, $|\sum a_j\tilde{S}_{j+m}|<Q/2$ and is congruent to $\sum a_jS_{j+m}$ modulo $Q$). We get
\[
\sum_{j=0}^E a_j\widetilde{S_{j+m}}=0 %
\]
for every $0\le m\le L-E$. Thus $P_1(x)\coloneqq a_0+a_1x+\cdots+a_Ex^E$ belongs to
\[
\Lambda_E(\{\widetilde{S_j}\}_{j=0}^L) \sub \Lambda(\{\widetilde{S_j}\}_{j=0}^{\ceil{2L/3}})
\]
(the inclusion holds because $E\le L/3$).

Since $\ceil{2L/3}\ge 2E$, Lemma~\ref{lem:approx-ideal} %
gives a unique, up to sign, polynomial $P_0\in\Lambda(\{\widetilde{S_j}\}_{j=0}^{\ceil{2L/3}})$ of minimal degree and with relatively prime coefficients. It follows that $P_0\mid P_1$, so $\deg P_0\le E$.

Returning to our application of Lemma~\ref{lem:apply-pol}, we will now use the first conclusion of the lemma. We get, for each prime, that %
$P_0(\alpha_{j,l})=0$ whenever $\beta_{j,l}\ne 0$. Thus, for each such pair $(j,l)$, there is an irreducible factor $P_{j,l}$ of $P_0$ such that $P_{j,l}(\alpha_{j,l})=0$.

We now bound the Mahler measure of each $P_{j,l}$. Let $\zeta_1,\dots,\zeta_{\deg P_{j,l}}$ be the complex roots of $P_{j,l}$. Write $s=\floor{L/6E}$, so that $s\ge 4\log E$. By \cite[Lemma 26]{BreuillardVarju19}, there is a prime $\ell\in(s,2s]$ such that $\zeta_a/\zeta_b$ is not an $\ell$-th root of unity for any $a\ne b$. Hence the numbers $\zeta_1^\ell,\dots,\zeta_{\deg P_{j,l}}^\ell$ are all distinct.

Using the same pigeonhole argument as above, we can find a nonzero polynomial
\[
P_2(x)=\widetilde{P}_2(x^\ell), \qquad \widetilde{P}_2(x)=b_0+b_1x+\cdots+b_Ex^E,
\]
with $b_j\in\set{0,\pm 1}$, such that
\[
P_2\in \Lambda_{E\ell}(\{\widetilde{S_j}\}_{j=0}^L) \sub \Lambda(\{\widetilde{S_j}\}_{j=0}^{\ceil{2L/3}}).
\]
Here the inclusion follows from $E\ell\le L/3$. By the divisibility property of $P_0$, we have $P_0\mid P_2$. In particular, $P_{j,l}\mid P_2$. We get %
\[
M(P_{j,l})^\ell \stackrel{(*)}{\le} M(\widetilde{P}_2)
\stackrel{(**)}{\le} (E+1)^{1/2},
\]
where $(*)$ follows because the roots $\zeta_a^\ell$ are all distinct and $(**)$ follows from bounding the Mahler measure by the $L^2$ norm of the coefficients.
Therefore
\[
  M(P_{j,l}) \le (E+1)^{1/2\ell} \le (E+1)^{3E/L} \le \exp\left(\frac{30}{\Ckon(\log\log Q^{rD})^3}\right).
\]

We recall Dobrowolski's theorem~\cite{Dobrowolski79}: there is a universal constant $c>0$ such that, if $P\in\Z[x]$ is a polynomial with
\[
M(P) < \exp\left(c\left(\frac{\log\log\deg P}{\log\deg P}\right)^3\right),
\]
then $P$ is a product of powers of $x$ and cyclotomic polynomials. In our case,
\[
M(P_{j,l}) \le \exp\left(\frac{30}{\Ckon(\log\deg P_{j,l})^3}\right).
\]
Thus, after increasing $\Ckon$ if necessary (without dependence on any of the parameters), every such irreducible polynomial $P_{j,l}$ is cyclotomic or equal to $x$. The latter is impossible, because $P_{j,l}(\alpha_{j,l})=0$ while $\alpha_{j,l}\ne 0$, so $P_{j,l}$ must be cyclotomic. Since the degree of the $m$-th cyclotomic polynomial is Euler's totient function and $\varphi(m)\gg \frac{m}{\log\log m}$, we get
\[
\frac{m}{\log\log m}\ll \phi(m)=\deg P\le E,
\]
or equivalently $m\ll E\log\log E$. It follows that every $\alpha_{j,l}$ with $\beta_{j,l}\ne 0$ has multiplicative order smaller than $\Cord\log Q^{rD}\log\log\log Q^{rD}$. This proves the proposition.
\end{proof}

We can now conclude a uniform upper bound on the Fourier coefficients of $\nu_{\alpha}^{(d,d+l)}$:

\begin{cor}\label{cor:final-fourier-nu-bound}
Let $\alpha\in\mathcal{V}$ and $\xi\in\mathcal{V}^r\setminus\set{0}$, and assume that $\alpha$ is generic. Suppose that there exists a pair $(j_0,l_0)$ such that $\xi_{j_0,l_0}\ne 0$ and $\alpha_{j_0,l_0}$ has multiplicative order at least $\Cord\log Q^{rD}\log\log\log Q^{rD}$. Suppose further that $\supp(\mu)\sub\left(-\frac{1}{2}p_j,\frac{1}{2}p_j\right)$ for every $1\le j\le M$.

Let $L\ge \Ckon\log Q^{rD}(\log\log Q^{rD})^4$. Then
\[
\left|\whnu{_{\alpha}^{(d,d+l)}}(\xi)\right| < \exp\left(-\frac{1-\norm{\mu}_2^2}{8\log(4L)}\right)
\]
for all $d\ge 1$ and $l\ge L$.
\end{cor}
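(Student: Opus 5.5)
We argue by contradiction using Proposition~\ref{prop:Sj-tilde-bound}, applied pair by pair to the terms of the bound in Corollary~\ref{cor:fourier-nu-bound}. Fix $d\ge 1$ and $l\ge L$. Corollary~\ref{cor:fourier-nu-bound} gives
\[
\left|\whnu{_{\alpha}^{(d,d+l)}}(\xi)\right| \le \exp\left(-\sum_{a_1\ne a_2}\mu(a_1)\mu(a_2)\sum_{j=d}^{d+l}\frac{([S_j((a_1-a_2)\xi)]^{\sim})^2}{Q^2}\right),
\]
where we have dropped the nonnegative diagonal terms $a_1=a_2$. Since $\sum_{a_1\ne a_2}\mu(a_1)\mu(a_2)=1-\norm{\mu}_2^2$, it suffices to prove the following claim: for every pair $a_1\ne a_2$ in $\supp(\mu)$, with $\beta=(a_1-a_2)\xi$,
\[
\sum_{j=d}^{d+l}\widetilde{S_j(\beta)}^2>\frac{Q^2}{8\log(4L)}.
\]
Summing this claim against the weights $\mu(a_1)\mu(a_2)$ then yields the stated strict inequality.

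To prove the claim, first reduce to a window starting at index $0$. The sum only increases if we restrict to the $L+1$ indices $j=d,\dots,d+L$, so it suffices to bound the sum over this window from below. By Lemma~\ref{lem:linear-transform}, we have $\mathbf{v}_{j+d}(\alpha)=A_\alpha^d\mathbf{v}_j(\alpha)$. Hence
\[
S_{j+d}(\beta)=\Psi\circ\tr\left(\left\langle(A_\alpha^d)^{T}\beta,\mathbf{v}_j(\alpha)\right\rangle\right)=S_j(\beta'),
\]
where $\beta'=(A_\alpha^d)^T\beta$ and the transpose is taken coordinatewise with respect to the bilinear pairing $\langle\cdot,\cdot\rangle$. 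The operator $A_\alpha$ acts on each coordinate $(j,l)$ as a lower-triangular matrix with diagonal entries $\alpha_{j,l}\ne0$. It is therefore invertible coordinatewise, so $\beta'_{j,l}\ne0$ if and only if $\beta_{j,l}\ne0$. The claim thus reduces to showing $\sum_{j=0}^L\widetilde{S_j(\beta')}^2>Q^2/(8\log(4L))$.

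Next, check that $\beta_{j_0,l_0}\ne 0$. We know $\xi_{j_0,l_0}\ne0$ and $0<|a_1-a_2|<p_{j_0}$, because $\supp(\mu)\sub(-\tfrac12p_{j_0},\tfrac12p_{j_0})$. So $a_1-a_2$ is a unit in $\FF_{p_{j_0}}$, and $\beta_{j_0,l_0}\ne0$, hence also $\beta'_{j_0,l_0}\ne0$. Now suppose the reduced sum were at most $Q^2/(8\log(4L))$. Proposition~\ref{prop:Sj-tilde-bound}, applied to the generic $\alpha$ and to $\beta'$, would then force $\alpha_{j_0,l_0}$ to have multiplicative order smaller than $\Cord\log Q^{rD}\log\log\log Q^{rD}$, contradicting the hypothesis. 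This proves the claim.

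The main point needing care is the shift step: we must make sure that the dual action of $A_\alpha^d$ on frequencies preserves the support pattern $\{(j,l):\beta_{j,l}\neq 0\}$. This rests on the coordinatewise triangular structure of $A_\alpha$ together with $\alpha_{j,l}\ne0$, both of which hold here.
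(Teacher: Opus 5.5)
Your proposal is correct and follows the paper's proof step for step: you drop the diagonal pairs, move the shift by $d$ onto the frequency via the adjoint of $A_\alpha^d$, use that $a_1-a_2$ is a unit mod $p_{j_0}$, and apply Proposition~\ref{prop:Sj-tilde-bound} in contrapositive form; you also justify support-preservation (triangular $A_\alpha$ with nonzero diagonal entries $\alpha_{j,l}$), which the paper only asserts. One wording slip: restricting to the window $j=d,\dots,d+L$ can only \emph{decrease} the nonnegative sum, not increase it, and that is exactly why a lower bound on the window suffices.
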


\begin{proof}
We recall from Corollary~\ref{cor:fourier-nu-bound} that
\[
\left|\whnu{_{\alpha}^{(d,d+l)}}(\xi)\right|
\le
\exp\left(-\sum_{a_1,a_2\in\Z}
\left(
\mu(a_1)\mu(a_2)
\sum_{j=d}^{d+l}
\frac{([S_j((a_1-a_2)\xi)]^{\sim})^2}{Q^2}
\right)\right).
\]
We first explain how to remove the shift by $d$. As in the proof of \Lref{lem:linear-transform}, we have
\[
\left\langle \xi,\mathbf{v}_{d+j}(\alpha)\right\rangle = \left\langle \xi,A_{\alpha}^d\mathbf{v}_j(\alpha)\right\rangle = \left\langle (A_{\alpha}^*)^d\xi,\mathbf{v}_j(\alpha)\right\rangle
\]
for every $j\ge 0$ and every $\xi\in\mathcal{V}^r$. Consequently,
\[
S_{d+j}(\xi)=S_j((A_{\alpha}^*)^d\xi)
\]
for every $j\ge 0$. Moreover, for every coordinate $(j,l)$, $\xi_{j,l}\ne 0$ if and only if $ ((A_{\alpha}^*)^d\xi)_{j,l}\ne 0$.

Now fix $a_1\ne a_2$ in the support of $\mu$. Since $\supp(\mu)\sub\left(-\frac{1}{2}p_j,\frac{1}{2}p_j\right)$ for every $j$, the integer $a_1-a_2$ is nonzero modulo every prime $p_j$. Hence $\big((a_1-a_2)(A_{\alpha}^*)^d\xi\big)_{j_0,l_0}\ne 0$. Applying Proposition~\ref{prop:Sj-tilde-bound} to $\beta=(a_1-a_2)(A_{\alpha}^*)^d\xi$ and using the assumed lower bound on the multiplicative order of $\alpha_{j_0,l_0}$, we obtain
\[
\sum_{j=0}^L \frac{([S_j((a_1-a_2)(A_{\alpha}^*)^d\xi)]^{\sim})^2}{Q^2} > \frac{1}{8\log(4L)}.
\]
Equivalently,
\[
\sum_{j=d}^{d+L} \frac{([S_j((a_1-a_2)\xi)]^{\sim})^2}{Q^2} > \frac{1}{8\log(4L)}.
\]
Since $l\ge L$, this gives
\[
\sum_{j=d}^{d+l} \frac{([S_j((a_1-a_2)\xi)]^{\sim})^2}{Q^2} > \frac{1}{8\log(4L)}.
\]

Returning to the Fourier bound, we get
\[
\left|\whnu{_{\alpha}^{(d,d+l)}}(\xi)\right| < \exp\left(-\frac{1}{8\log(4L)}\sum_{a_1\ne a_2}\mu(a_1)\mu(a_2)\right) = \exp\left(-\frac{1-\norm{\mu}_2^2}{8\log(4L)}\right).
\]
This proves the corollary.
\end{proof}

\begin{cor}\label{cor:iterated-fourier-nu-bound}
There is a universal constant $c>0$ such that the following holds. Let $\alpha\in\mathcal{V}$ and $\xi\in\mathcal{V}^r\setminus\set{0}$, and assume that $\alpha$ is generic. Suppose that there exists a pair $(j_0,l_0)$ such that $\xi_{j_0,l_0}\ne 0$ and $\alpha_{j_0,l_0}$ has multiplicative order at least $\Cord\log Q^{rD}\log\log\log Q^{rD}$. Suppose further that $\supp(\mu)\sub\left(-\frac{1}{2}p_j,\frac{1}{2}p_j\right)$ for every $1\le j\le M$. Then
\[
\left|\whnu{_\alpha^{(d,d+l)}}(\xi)\right| \le \exp\left(-\frac{cl}{\log Q^{rD}(\log\log Q^{rD})^5}\right)
\]
for all $d\ge 1$ and $l\ge \Ckon\log Q^{rD}(\log\log Q^{rD})^4$.
\end{cor}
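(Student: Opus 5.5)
The plan is to chop the interval $[d,d+l]$ into consecutive blocks of length about $L_0$, bound the Fourier coefficient of each block by \Cref{cor:final-fourier-nu-bound}, and multiply. Fix the minimal admissible length
\[
L_0\coloneqq\ceil{\Ckon\log Q^{rD}(\log\log Q^{rD})^4}.
\]
Given $l\ge L_0$, set $K\coloneqq\floor{(l+1)/(L_0+1)}\ge 1$. Split $\{d,\dots,d+l\}$ into $K$ consecutive blocks $I_1,\dots,I_K$. Each block has at least $L_0+1$ indices, i.e.\ it has the form $\{d_i,\dots,d_i+l_i\}$ with $d_i\ge d\ge 1$ and $l_i\ge L_0$; the leftover indices are absorbed into the last block.

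Because the coefficients $\omega_j$ are independent, $\nu_\alpha^{(d,d+l)}$ is the convolution of the block measures $\nu_\alpha^{(d_i,d_i+l_i)}$. Taking Fourier transforms gives
\[
\left|\whnu{_\alpha^{(d,d+l)}}(\xi)\right| = \prod_{i=1}^K\left|\whnu{_\alpha^{(d_i,d_i+l_i)}}(\xi)\right|.
\]
The hypotheses of \Cref{cor:final-fourier-nu-bound} are exactly those of the present statement: $\alpha$ generic, the pair $(j_0,l_0)$ with $\xi_{j_0,l_0}\ne0$ and large multiplicative order, and $\supp(\mu)\sub(-\frac12p_j,\frac12p_j)$. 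The relevant quantity $\xi$ is the same for every block, and the statement allows any $d_i\ge1$ and any $l_i\ge L_0$. So, applying it with $L=L_0$, each factor is at most $\exp\bigl(-(1-\norm{\mu}_2^2)/(8\log(4L_0))\bigr)$, and therefore
\[
\left|\whnu{_\alpha^{(d,d+l)}}(\xi)\right|\le\exp\left(-\frac{K(1-\norm{\mu}_2^2)}{8\log(4L_0)}\right).
\]

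It remains to bookkeep the exponent. Since $l\ge L_0$, we have $K\ge l/(2(L_0+1))\gg l/L_0$. Also $\log(4L_0)\ll\log\log Q^{rD}$, because $L_0$ is polylogarithmic in $Q^{rD}$ and $\log\log Q^{rD}$ is bounded below once we use that $Q\ge2$. Combining, the exponent is
\[
\gg\frac{(1-\norm{\mu}_2^2)\,l}{\log Q^{rD}(\log\log Q^{rD})^5}.
\]

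The only point needing care, and the main obstacle, is the factor $1-\norm{\mu}_2^2$. It is positive because $\mu$ is non-degenerate, but it depends on $\mu$, while the statement asks for a universal $c$. There are two ways to handle it. The first is to read "universal" modulo dependence on $\mu$, which the paper's notational conventions allow, since all constants may depend on $\mu$. The second is to note that $\supp(\mu)$ contains two distinct points $a_1\ne a_2$, so $1-\norm{\mu}_2^2\ge2\mu(a_1)\mu(a_2)>0$; this bound is still $\mu$-dependent, though. So I would absorb $1-\norm{\mu}_2^2$ into $c$, consistent with the convention that constants may depend on $\mu$. I would also check the small-$Q$ edge cases, where $\log\log Q^{rD}$ could be tiny; these are harmless once we note that $\log Q^{rD}\ge\log 2$ and adjust $\Ckon$.
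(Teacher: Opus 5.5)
Your proposal is correct and is essentially the paper's proof. Like the paper, you fix $L=\lceil \Ckon\log Q^{rD}(\log\log Q^{rD})^4\rceil$, split into $K=\lfloor (l+1)/(L+1)\rfloor$ blocks, bound each block by Corollary~\ref{cor:final-fourier-nu-bound}, and use $K\gg l/L$ and $\log(4L)\ll\log\log Q^{rD}$; the only differences are that you absorb the leftover indices into the last block (allowed, since that corollary accepts any length $\ge L$) where the paper discards them using $|\widehat{\nu}|\le 1$, and that you flag the factor $1-\norm{\mu}_2^2$ explicitly, which the paper also silently absorbs into $c$.
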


\begin{proof}
We set $L \coloneqq \ceil{\Ckon\log Q^{rD}(\log\log Q^{rD})^4}$ and $K \coloneqq \floor{\frac{l+1}{L+1}}$. We divide the interval $\set{d,\dots,d+l}$ into $K$ consecutive intervals of length $L+1$, and discard the remaining indices, which we are allowed to do since $|\whnu{^{(\cdot,\cdot)}}|\le 1$ always. Since the coefficients are independent,
\[
\left|\whnu{_\alpha^{(d,d+l)}}(\xi)\right| \le \prod_{j=0}^{K-1}\left|\whnu{_\alpha^{
(d+j(L+1),\,d+j(L+1)+L)}}(\xi)\right|.
\]
By \Cref{cor:final-fourier-nu-bound}, each factor is at most $\exp\left(-\frac{1-\norm{\mu}_2^2}{8\log(4L)}\right)$. Therefore
\[
\left|\whnu{_\alpha^{(d,d+l)}}(\xi)\right| \le \exp\left(-\frac{K(1-\norm{\mu}_2^2)}{8\log(4L)}\right).
\]
After increasing $\Ckon$ if necessary, $K \gg \frac{l}{\log Q^{rD}(\log\log Q^{rD})^4}$, while $\log(4L) \ll \log\log Q^{rD}$. The result follows.
\end{proof}

We write $\AA_{\mathcal{V}}$ for the set of generic $\alpha\in\mathcal{V}$ such that each $\alpha_{j,l}$ is admissible (recall \S\ref{sec:admiss}). We can therefore deduce:

\begin{prop}\label{prop:full-mixing-bound}
There are absolute constants $C,c>0$ such that the following holds. Let $n\ge 1$ be a positive integer with
\begin{align*}
  n & \ge \frac{C}{1-\norm{\mu}_2^2}MT\log Q^{rD}(\log\log Q^{rD})^6, \\
  \log Q^{rD} & \ge \frac{1}{1-\norm{\mu}_2^2}.
\end{align*}
Suppose further that $\supp\mu\sub\left(-\frac{1}{2}p_j,\frac{1}{2}p_j\right)$ for all $1\le j\le M$. Then
\[
\left|\sum_{\alpha\in A}\nu_{\alpha}^{(n)}(0) - \frac{\left|A\right|}{\left|\mathcal{V}\right|^r}\right| \le \exp\left(-\frac{cn}{\log Q^{rD}(\log\log Q^{rD})^5}\right)
\]
for any nonempty $A\sub\AA_{\mathcal{V}}$.
\end{prop}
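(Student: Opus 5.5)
Fourier inversion on the finite group $\mathcal{V}^r$ is the natural route. For each $\alpha$,
\[
\nu_\alpha^{(n)}(0)=\frac{1}{|\mathcal{V}|^r}\sum_{\xi\in\mathcal{V}^r}\whnu{_\alpha^{(n)}}(\xi).
\]
The term $\xi=0$ contributes exactly $|\mathcal{V}|^{-r}$. Hence
\[
\left|\sum_{\alpha\in A}\nu_\alpha^{(n)}(0)-\frac{|A|}{|\mathcal{V}|^r}\right|\le \frac{1}{|\mathcal{V}|^r}\sum_{\alpha\in A}\sum_{\xi\ne 0}\left|\whnu{_\alpha^{(n)}}(\xi)\right|\le |A|\max_{\alpha\in A,\ \xi\ne0}\left|\whnu{_\alpha^{(n)}}(\xi)\right|.
\]
Since $|A|\le|\mathcal{V}|\le Q^{D}$, it suffices to show that every nonzero Fourier coefficient is at most $Q^{-D}\exp(-2c'n/(\log Q^{rD}(\log\log Q^{rD})^5))$ for a suitable $c'$. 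The $\omega_0$ term and the deterministic leading term $x^n$ have modulus-one Fourier factors, so the only genuine input is $\nu_\alpha^{(1,n-1)}$.

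To bound $|\whnu{_\alpha^{(1,n-1)}}(\xi)|$, fix $\alpha\in A\subseteq\AA_{\mathcal{V}}$ and $\xi\ne0$. Then $\alpha$ is generic, and there is some $(j_0,l_0)$ with $\xi_{j_0,l_0}\ne0$. Because $\alpha_{j_0,l_0}$ is admissible, its multiplicative order is at least $n$. The hypothesis on $n$, with $C$ large, forces $n\ge \Cord\log Q^{rD}\log\log\log Q^{rD}$, so the order condition of \Cref{cor:iterated-fourier-nu-bound} holds. The support hypothesis on $\mu$ is assumed. Taking $d=1$ and $l=n-2$, the hypothesis on $n$ also gives $l\ge \Ckon\log Q^{rD}(\log\log Q^{rD})^4$. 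The corollary then yields
\[
\left|\whnu{_\alpha^{(n)}}(\xi)\right|\le\exp\left(-\frac{c_0(n-2)}{\log Q^{rD}(\log\log Q^{rD})^5}\right).
\]

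The main obstacle is absorbing the factor $|A|\le Q^{D}\le Q^{rD}$, and this is where the two numerical hypotheses are used. We need $D\log Q\le \tfrac{c_0}{2}\, n/(\log Q^{rD}(\log\log Q^{rD})^5)$, which is equivalent to $n\gg (\log Q^{rD})^2(\log\log)^5$. As stated, the hypothesis only gives $n$ linear in $\log Q^{rD}$ times $MT(\log\log)^6$, so this looks like it falls short by a factor of $\log Q^{rD}$.

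The first thing I would try is to avoid the crude $\max$ bound by distributing the decay across the coordinates. Split $\{1,\dots,n-1\}$ into $MT$ blocks. For each prime $p_j$ and each coordinate $l$, we only need decay of size $\log q_{j,l}$ instead of $\log|\mathcal{V}|$, that is, a bound coordinate-by-coordinate on $\xi$. Concretely, I would sum over $\xi$ organized by its support pattern $S=\{(j,l):\xi_{j,l}\ne0\}$, using $\sum_\xi\le\prod_{(j,l)\in S}q_{j,l}^r$. I would then apply \Cref{cor:iterated-fourier-nu-bound} to the sub-configuration $\mathcal{V}_S$, replacing $Q,D$ by their restrictions. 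The factor $(\log\log)^6$ against $(\log\log)^5$, together with $MT$ and $1/(1-\norm{\mu}_2^2)$ in the hypothesis, suggests the intended bookkeeping is linear. So the decay rate $c/(\log Q^{rD}(\log\log)^5)$ times $n$ must beat $\log Q^{rD}$ only after being multiplied by the normalization $1/|\mathcal{V}|^r$ already present. If that fails, I would fall back to the crude bound and accept a slightly stronger requirement on $n$.

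The final step is cosmetic: the hypothesis $\log Q^{rD}\ge(1-\norm{\mu}_2^2)^{-1}$ lets us replace $n-2$ by $n$ and halve the constant $c$.
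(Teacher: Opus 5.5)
\textbf{There is a genuine gap.} It sits exactly where you locate it: the bound by $|A|\max_{\alpha\in A,\,\xi\ne 0}|\whnu{_\alpha^{(n)}}(\xi)|$. Since $|A|$ can be as large as $|\mathcal V|$, the uniform decay $\exp(-cn/(\log Q^{rD}(\log\log Q^{rD})^5))$ from \Cref{cor:iterated-fourier-nu-bound} absorbs it only when $n\gg\log|\mathcal V|\cdot\log Q^{rD}(\log\log Q^{rD})^5$. For $\mathcal V=\FF_q$ and $r=2$ this means $n\gg(\log q)^2(\log\log q)^5$, which the hypothesis does not give.

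Your proposed repair does not close this. After the normalisation $|\mathcal V|^{-r}$, the sum over $\xi$ costs nothing; the whole loss comes from the sum over $\alpha$. If you group $\xi$ by support $S$, the loss is $|A|\,|\mathcal V_S|^r/|\mathcal V|^r$. For $\xi$ of full support this is still $|A|$, with the same decay rate.

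Your fallback therefore proves a quadratically weaker statement. That version would happen to suffice for the later applications, where $\log Q^{rD}\ll\sqrt n/\log^3 n$, but it is not the proposition. Also, the hypothesis $\log Q^{rD}\ge(1-\norm{\mu}_2^2)^{-1}$ has nothing to do with replacing $n-2$ by $n$.

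\textbf{The missing idea} is to exploit the average over $\alpha$ through an $L^2$ collision bound on a short initial block of coefficients, instead of bounding each $\alpha$ separately.

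Put $l_0=\lceil\log|\mathcal V|/H_2(\mu)\rceil$ and let $\widetilde R$ be an independent copy. A nonzero difference $R_{d,d+l_0-1}-\widetilde R_{d,d+l_0-1}$ stays nonzero modulo every $p_j$, by the support assumption. So it has at most $l_0$ roots in each of the at most $MT$ coordinate fields. The two blocks coincide with probability $\norm{\mu}_2^{2l_0}\le|\mathcal V|^{-1}$. Hence
\[
\sum_{\alpha\in\mathcal V}\norm{\nu_\alpha^{(d,d+l_0-1)}}_2^2\le\E\left|\set{\alpha\in\mathcal V\suchthat R_{d,d+l_0-1}(\alpha)=\widetilde R_{d,d+l_0-1}(\alpha)}\right|\le l_0^{MT}+1\le 2l_0^{MT}.
\]
Apply Parseval, then Cauchy--Schwarz over the pair $(\alpha,\xi)$, to $\nu_\alpha^{(1,2l_0)}=\nu_\alpha^{(1,l_0)}*\nu_\alpha^{(l_0+1,2l_0)}$. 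This gives $|\mathcal V|^{-r}\sum_{\alpha\in\mathcal V}\sum_{\xi}|\whnu{_\alpha^{(1,2l_0)}}(\xi)|\le 2l_0^{MT}$.

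Only the remaining block $\{2l_0+1,\dots,n-1\}$ is then treated by \Cref{cor:iterated-fourier-nu-bound}, uniformly in $\alpha\in A$ and $\xi\ne 0$. Multiplying the two bounds gives total error $2l_0^{MT}\exp(-cn/(\log Q^{rD}(\log\log Q^{rD})^5))$. The loss is now $2l_0^{MT}$ instead of $|\mathcal V|$.

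This is also where the hypotheses enter:
\begin{itemize}
\item Since $l_0\le 1+\log Q^D/(1-\norm{\mu}_2^2)$ (using $H_2(\mu)\ge 1-\norm{\mu}_2^2$), the hypothesis $\log Q^{rD}\ge(1-\norm{\mu}_2^2)^{-1}$ gives $\log(2l_0^{MT})\ll MT\log\log Q^{rD}$.
\item That loss is exactly what the factor $MT(\log\log Q^{rD})^6$ in the lower bound on $n$ absorbs.
\item The factor $(1-\norm{\mu}_2^2)^{-1}$ in that bound ensures $n-2l_0-2\ge n/2$.
\end{itemize}
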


\begin{proof}
Fix $A\sub\AA_{\mathcal{V}}$. Since $\nu(0)=\frac{1}{|\mathcal{V}|^r}\sum_{\xi\in\mathcal{V}^r}\widehat{\nu}(\xi)$ and $\widehat{\nu}(0)=1$ for any probability measure $\nu$ on $\mathcal{V}^r$, we have
\begin{equation}\label{eq:kon-mixing-eq1}
\left|\sum_{\alpha\in A}\nu_{\alpha}^{(n)}(0)-\frac{|A|}{|\mathcal{V}|^r}\right| \le \frac{1}{|\mathcal{V}|^r}\sum_{\alpha\in A}\sum_{\xi\in\mathcal{V}^r\setminus\{0\}} \left|\whnu{_{\alpha}^{(n)}}(\xi)\right|.
\end{equation}

We begin by bounding
\[
\sum_{\alpha\in A}\|\nu_{\alpha}^{(d,d+l)}\|_2^2 \le \sum_{\alpha\in\mathcal{V}}\|\nu_{\alpha}^{(d,d+l)}\|_2^2.
\]
Let $\widetilde{R}_{d,d+l}$ be an independent copy of $R_{d,d+l}$. Then
\begin{align*}
  \sum_{\alpha\in\mathcal{V}}\|\nu_{\alpha}^{(d,d+l)}\|_2^2 &= \sum_{\alpha\in\mathcal{V}} \Pr\left(R_{d,d+l}^{(i)}(\alpha) = \widetilde{R}_{d,d+l}^{(i)}(\alpha)\text{ for every }0\le i\le r-1 \right)\\
  &\le \E\left|\set{\alpha\in\mathcal{V} \suchthat R_{d,d+l}(\alpha)=\widetilde{R}_{d,d+l}(\alpha)}\right|.
\end{align*}
If $R_{d,d+l}\ne\widetilde{R}_{d,d+l}$, then $R_{d,d+l}-\widetilde{R}_{d,d+l}$ is nonzero in each of the fields $\FF_{p_j}$ appearing in $\mathcal{V}$ (since every nonzero coefficient difference has absolute value smaller than $p_j$ for every $j$), so it has at most $l+1$ roots in each of these fields. Hence
\[
\left|\set{\alpha\in\mathcal{V} \suchthat R_{d,d+l}(\alpha)=\widetilde{R}_{d,d+l}(\alpha)}\right| \le (l+1)^{MT}.
\]
Moreover, $\Pr\left(R_{d,d+l}=\widetilde{R}_{d,d+l}\right) = \|\mu\|_2^{2(l+1)}$, and thus
\[
\sum_{\alpha\in\mathcal{V}}\|\nu_{\alpha}^{(d,d+l)}\|_2^2 \le (l+1)^{MT}+|\mathcal{V}|\|\mu\|_2^{2(l+1)} = (l+1)^{MT}+|\mathcal{V}|\exp\left(-(l+1)H_2(\mu)\right).
\]
Set $l_0\coloneqq\left\lceil\frac{\log|\mathcal{V}|}{H_2(\mu)}\right\rceil$. Applying the previous estimate to intervals of length $l_0$ and using Parseval we obtain
\[
\frac{1}{|\mathcal{V}|^r}\sum_{\alpha\in\mathcal{V}}\sum_{\xi\in\mathcal{V}^r} \left|\whnu{_{\alpha}^{(d,d+l_0-1)}}(\xi)\right|^2 = \sum_{\alpha\in\mathcal{V}} \|\nu_{\alpha}^{(d,d+l_0-1)}\|_2^2\le 2l_0^{MT}.
\]
Since $\nu_{\alpha}^{(1,2l_0)} = \nu_{\alpha}^{(1,l_0)} * \nu_{\alpha}^{(l_0+1,2l_0)}$, two applications of Cauchy--Schwarz give
\begin{align}
  \frac{1}{|\mathcal{V}|^r}\sum_{\alpha\in\mathcal{V}}\sum_{\xi\in\mathcal{V}^r} \left|\whnu{_{\alpha}^{(1,2l_0)}}(\xi)\right|
  &\le \left(\frac{1}{|\mathcal{V}|^r}\sum_{\alpha\in\mathcal{V}}\sum_{\xi\in\mathcal{V}^r} \left|\whnu{_{\alpha}^{(1,l_0)}}(\xi)\right|^2 \right)^{1/2}\notag\\
  &\qquad\cdot\left(\frac{1}{|\mathcal{V}|^r} \sum_{\alpha\in\mathcal{V}}\sum_{\xi\in\mathcal{V}^r} \left|\whnu{_{\alpha}^{(l_0+1,2l_0)}}(\xi)\right|^2\right)^{1/2}\notag\\
  &\le 2l_0^{MT}.
\label{eq:fourier-bound1}
\end{align}
Put $l_1=n-2l_0-2$, so that $\nu_{\alpha}^{(2l_0+1,n-1)} = \nu_{\alpha}^{(2l_0+1,\,2l_0+1+l_1)}$. After increasing $C$ if necessary, the assumptions imply that
\[
l_1\ge \frac{n}{2} \qquad\text{and}\qquad l_1\ge \Ckon\log Q^{rD}(\log\log Q^{rD})^4.
\]
Moreover, since $\alpha\in\AA_{\mathcal{V}}$, every coordinate of $\alpha$ has multiplicative order at least $n$. The lower bound on $n$ therefore implies that every coordinate has multiplicative order at least $\Cord\log Q^{rD}\log\log\log Q^{rD}$. By Corollary \ref{cor:iterated-fourier-nu-bound}, for every $\alpha\in A$ and $\xi\in\mathcal{V}^r\setminus\{0\}$,
\[
\left|\whnu{_{\alpha}^{(2l_0+1,n-1)}}(\xi)\right| \le \exp\left(-\frac{cn}{\log Q^{rD}(\log\log Q^{rD})^5}\right).
\]

The coefficients indexed by $\set{1,\dots,2l_0}$ and $\set{2l_0+1,\dots,n-1}$ are independent, while the remaining constant and leading coefficients contribute Fourier factors of absolute value at most $1$. Hence
\[
\left|\whnu{_{\alpha}^{(n)}}(\xi)\right|
\le \left|\whnu{_{\alpha}^{(1,2l_0)}}(\xi)\right| \left|\whnu{_{\alpha}^{(2l_0+1,n-1)}}(\xi)\right|.
\]
Using \eqref{eq:fourier-bound1}, we obtain
\[
\frac{1}{|\mathcal{V}|^r} \sum_{\alpha\in A} \sum_{\xi\in\mathcal{V}^r\setminus\{0\}} \left|\whnu{_{\alpha}^{(n)}}(\xi)\right| \le 2l_0^{MT}\exp\left(-\frac{cn}{\log Q^{rD}(\log\log Q^{rD})^5}\right).
\]

Finally, $l_0 \le 1+(1-\norm{\mu}_2^2)^{-1}\log|\mathcal{V}| \le 1+(1-\norm{\mu}_2^2)^{-1}\log Q^D$, and hence, using $\log Q^{rD}\ge(1-\norm{\mu}_2^2)^{-1}$, we have $\log(2l_0^{MT}) \ll MT\log\log Q^{rD}$. The assumed lower bound on~$n$ allows this factor to be absorbed into the exponential. Thus, after decreasing~$c$ if necessary,
\[
\frac{1}{|\mathcal{V}|^r} \sum_{\alpha\in A}\sum_{\xi\in\mathcal{V}^r\setminus\{0\}} \left|\whnu{_{\alpha}^{(n)}}(\xi)\right| \le \exp\left(-\frac{cn}{\log Q^{rD}(\log\log Q^{rD})^5}\right).
\]
The proposition now follows from \eqref{eq:kon-mixing-eq1}.
\end{proof}

\section{Prime divisors of the discriminant and double roots in small extensions}\label{sec:prime-divisors}

In this section we use the mixing estimates of the previous section to estimate the number of primes $p$ up to $\exp(\sqrt{n}/\log^3 n)$ for which the polynomial $R$ has a unique admissible double root in $\FF_p$. We then show that for most of the primes with a double root, $R$ has neither a triple in $\FF_p$ nor a double root in small extensions of $\FF_p$.

For a set of primes $\PP$, we write $S(\PP)\coloneqq\sum_{p\in\PP}\frac{1}{p}$ and $S_2(\PP)\coloneqq\sum_{p\in\PP}\frac{1}{p^2}$. We record the following fact:

\begin{rem}\label{rem:triv-P}
Let $\PP$ be a set of primes such that $\min\PP\ge c$ and $\max\PP\le C$. Then $\left|\PP\right|\le C$, $S(\PP) \le \log\log C + O(1)$, and $S_2(\PP)\le\sum_{p\ge c}\frac{1}{p^2}=\frac{1+o(1)}{c\log c}$ as $c\to\infty$.
\end{rem}

We will mostly add the following assumptions on $\PP$, ensuring that all our estimates hold:

\begin{assum}\label{assum:P-prop}
We fix a constant $B_0\ge 3$, depending only on $\mu$, which will be chosen below (see \Rref{rem:B0}). We will use the following condition on a finite nonempty set of primes~$\PP$:
\begin{enumerate}[label=(P)]
  \item\label{item:P1} For every $p\in\PP$,
  \[
  n^{B_0} \le p \le \exp\left(\frac{\sqrt{n}}{\log^3 n}\right).
  \]
\end{enumerate}
\end{assum}

\begin{rem}
We will use freely throughout the paper the following consequence of~\ref{item:P1}. For every $A,B\ge 0$ and $c>0$, uniformly over the sets $\PP$ satisfying \ref{item:P1},
\[
n^A\left|\PP\right|^B\exp\left(-c\frac{\sqrt{n}}{\log^2 n}\right) = \exp\left(-(c+o(1))\frac{\sqrt{n}}{\log^2 n}\right).
\]
In particular,
\[
n^A\left|\PP\right|^B\exp\left(-c\frac{\sqrt{n}}{\log^2 n}\right) = o(S_2(\PP))
\]
and hence also $o(S(\PP))$.

Indeed, $\left|\PP\right| \le \max\PP \le \exp\left(\frac{\sqrt{n}}{\log^3 n}\right)$, while $S_2(\PP) \ge (\max\PP)^{-2}$.
\end{rem}

The following is a direct corollary of \Pref{prop:full-mixing-bound}:

\begin{cor}\label{cor:mix-app1}
There is a constant $c=c(\mu)>0$ and $n_0=n_0(\mu)$ such that the following holds for all $n\ge n_0$. Let $\PP$ be a finite set of primes satisfying \ref{item:P1}. Then the following bounds hold for any $p,p'\in\PP$:
\begin{enumerate}
  \item For every $A\sub\AA_p$,
  \[
  \left|\sum_{\alpha\in A}\Pr(R(\alpha)=R'(\alpha)=0\text{ in }\FF_p) - \frac{|A|}{p^2}\right| \le \exp\left(-\frac{c\sqrt{n}}{\log^2 n}\right)
  \]

  \item For every $A\sub\set{(\alpha,\beta)\in\AA_p^2\suchthat \alpha\ne\beta}$,
  \[
  \left|\sum_{(\alpha,\beta)\in A}\Pr\left(\begin{array}{c} R(\alpha)=R'(\alpha)=0,\\ R(\beta)=R'(\beta)=0 \end{array} \text{ in }\FF_p\right) - \frac{|A|}{p^4}\right| \le \exp\left(-\frac{c\sqrt{n}}{\log^2 n}\right)
  \]

  \item If $p\ne p'$, then for every $A\sub\AA_p\times\AA_{p'}$,
  \[
  \left|\sum_{(\alpha,\beta)\in A}\Pr\left(\begin{array}{c} R(\alpha)=R'(\alpha)=0\text{ in }\FF_p,\\ R(\beta)=R'(\beta)=0\text{ in }\FF_{p'} \end{array}\right) - \frac{|A|}{p^2(p')^2}\right| \le \exp\left(-\frac{c\sqrt{n}}{\log^2 n}\right)
  \]

  \item For every $A\sub\AA_p$,
  \[
  \left|\sum_{\alpha\in A}\Pr(R(\alpha)=R'(\alpha)=R''(\alpha)=0\text{ in }\FF_p) - \frac{|A|}{p^3}\right| \le \exp\left(-\frac{c\sqrt{n}}{\log^2 n}\right)
  \]
\end{enumerate}
\end{cor}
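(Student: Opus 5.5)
Each of the four items is a special case of \Pref{prop:full-mixing-bound}. The work is to choose the setup $(M,t_j,q_{j,l},r)$, to check that $A$ lies in $\AA_{\mathcal{V}}$, and to check that the numerical hypotheses hold under \ref{item:P1}.

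\textbf{Choice of setup.} The four items use the following parameters.
\begin{enumerate}
  \item $M=1$, $t_1=1$, $q_{1,1}=p$, $r=2$, so that $\mathcal{V}^r=\FF_p^2$.
  \item $M=1$, $t_1=2$, $q_{1,1}=q_{1,2}=p$, $r=2$, with $\alpha=(\alpha,\beta)$.
  \item $M=2$, $t_1=t_2=1$, $q_{1,1}=p$, $q_{2,1}=p'$, $r=2$.
  \item The same as the first, but with $r=3$.
\end{enumerate}
In each case $\nu_\alpha^{(n)}(0)$ is exactly the probability appearing in the statement. The main term $|A|/|\mathcal{V}|^r$ is $|A|/p^2$, $|A|/p^4$, $|A|/(p^2p'^2)$ and $|A|/p^3$ respectively.

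\textbf{Genericity and admissibility.} Over $\FF_p$ an element lies in no proper subfield and is its own only conjugate. So genericity only requires the coordinates to be nonzero, which admissibility gives, and in the second item that $\alpha\ne\beta$, which is assumed. Hence $A\sub\AA_{\mathcal{V}}$ in all four cases.

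\textbf{Numerical hypotheses.} We need $r<\min p_j$ and $\supp\mu\sub(-p/2,p/2)$; both follow from $p\ge n^{B_0}$ once $n\ge n_0(\mu)$, since $H_\mu$ and $r\le 3$ are fixed. In all cases $M,T\le 2$ and $D\le 2$, so
\[
\log Q^{rD}\le 12\log\max(p,p')\le 12\sqrt{n}/\log^3 n .
\]
Then
\[
\log Q^{rD}(\log\log Q^{rD})^6 \ll \sqrt n\,(\log n)^3 ,
\]
which is far below $n(1-\norm{\mu}_2^2)/C$ for large $n$. The second hypothesis, $\log Q^{rD}\ge(1-\norm{\mu}_2^2)^{-1}$, holds because $\log Q\ge B_0\log n$ tends to infinity.

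\textbf{Error term.} \Pref{prop:full-mixing-bound} gives an error of
\[
\exp\left(-\frac{c_0 n}{\log Q^{rD}(\log\log Q^{rD})^5}\right).
\]
We have $\log Q^{rD}\le 12\sqrt n/\log^3 n$ and $(\log\log Q^{rD})^5\le(\log n)^5$. This makes the denominator at most $12\sqrt n\,\log^2 n$, so the exponent is only $\gtrsim\sqrt n/\log^2 n$. The argument of $\log\log$ is $\ll\sqrt n$, so $(\log\log Q^{rD})^5\le((\tfrac12+o(1))\log n)^5$. The resulting exponent is at least $c\sqrt n\,\log^3 n/\log^5 n=c\sqrt n/\log^2 n$ for a constant $c=c(\mu)>0$, which is precisely the stated bound.

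The only delicate point is this bookkeeping of logarithmic powers. The exponent $3$ in \ref{item:P1} combined with the power $5$ in the proposition yields exactly $\log^2 n$ in the denominator; apart from this the corollary is a direct substitution.
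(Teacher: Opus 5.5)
Your proposal is correct and follows the paper's proof: you use the same four specializations of \Pref{prop:full-mixing-bound} ($\mathcal{V}=\FF_p$, $\FF_p\times\FF_p$, $\FF_p\times\FF_{p'}$ with $r=2$, and $\FF_p$ with $r=3$), check the hypotheses from \ref{item:P1}, and bound the error exponent below by $c\sqrt{n}/\log^2 n$. Your estimate $\log Q^{rD}\le 12\log\max(p,p')$ is cruder than the paper's (in fact $Q^{rD}\le\max(p,p')^4$ in all four cases), but this only changes the constant. Your explicit checks of genericity and of the condition $\log Q^{rD}\ge(1-\norm{\mu}_2^2)^{-1}$ fill in details that the paper leaves implicit.
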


\begin{proof}
These are special cases of \Pref{prop:full-mixing-bound}, taking:
\begin{enumerate}
  \item $\mathcal{V}=\FF_p$ and $r=2$; %
  \item $\mathcal{V}=\FF_p\times\FF_p$ and $r=2$; %
  \item $\mathcal{V}=\FF_p\times\FF_{p'}$ and $r=2$; %
  \item $\mathcal{V}=\FF_p$ and $r=3$. %
\end{enumerate}
When $n$ is sufficiently large, the assumptions of \Pref{prop:full-mixing-bound} hold by \ref{item:P1}. Indeed, in all cases
\[
Q^{rD} \le \exp\left(\frac{4\sqrt{n}}{\log^3 n}\right),
\]
and thus
\[
\log Q^{rD}(\log\log Q^{rD})^6 \le \frac{4\sqrt{n}}{\log^3 n}\log^6(4\sqrt{n}) \le 2\sqrt{n}\log^3(4n).
\]
Hence $n\ge C(1-\norm{\mu}_2^2)^{-1}\log Q^{rD}(\log\log Q^{rD})^6$ for the constant $C$ of \Pref{prop:full-mixing-bound} when $n$ is sufficiently large. We next note that
\[
\frac{n}{\log Q^{rD}(\log\log Q^{rD})^5} \ge \frac{n}{2\sqrt{n}\log^2(4n)} \gg \frac{\sqrt{n}}{\log^2 n},
\]
so the claim follows.
\end{proof}

For any polynomial $P\in\Z[x]$ and a set of primes $\PP$, let $\mathcal{U}_{\PP}(P)$ denote the number of primes in $\PP$ for which $P$ has a unique admissible double root in $\FF_p$.

\begin{prop}\label{prop:num-unique-double}
Let $0<\delta<\frac{1}{3}$, and let $\PP$ be a finite nonempty set of primes satisfying \ref{item:P1} and such that $p \ge \frac{n^2}{\delta}$ for all $p\in\PP$. Then
\[
\Pr\left(\mathcal{U}_{\PP}(R) < (1-3\delta)S(\PP)\right) \le \frac{K}{\delta^2 S(\PP)}
\]
for some universal constant $K>0$.
\end{prop}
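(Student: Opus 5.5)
This is a second moment argument. For each $p\in\PP$ let $X_p$ be the number of admissible $\alpha\in\AA_p$ with $R(\alpha)=R'(\alpha)=0$ in $\FF_p$, and let $Y_p=\ind{X_p\ge1}$ and $Z_p=\ind{X_p=1}$, so that $\mathcal{U}_{\PP}(R)=\sum_{p}Z_p$. Since $Z_p\ge X_p-\binom{X_p}{2}\cdot 2$, we have
\[
\mathcal{U}_{\PP}(R)\ \ge\ \sum_p X_p-\sum_p X_p(X_p-1),
\]
and it suffices to show that $W\coloneqq\sum_p X_p$ is concentrated around $S(\PP)$ and that $V\coloneqq\sum_p X_p(X_p-1)$ is small with high probability.

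\textbf{First moments.} By \Cref{cor:mix-app1}(1) with $A=\AA_p$, $\E X_p=|\AA_p|/p^2+\eps$, where $\eps=O(\exp(-c\sqrt n/\log^2 n))$. By \Lref{lem:count-inadmissible}, $|\AA_p|\ge p-1-n^2$. Using $p\ge n^2/\delta$, this gives $\E X_p\ge (1-\delta)/p-1/p^2+\eps$. Summing over $p$ and absorbing the error terms (using the remark after Assumption~\ref{assum:P-prop}, and $S_2(\PP)=o(S(\PP))$ because $p\ge n^{B_0}$) yields $\E W\ge(1-\delta-o(1))S(\PP)$ and $\E W\le(1+o(1))S(\PP)$. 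Next, $\E X_p(X_p-1)$ is the sum over ordered pairs $\alpha\ne\beta$ in $\AA_p$ of the probability of a double root at both. By \Cref{cor:mix-app1}(2) this is at most $p^2/p^4+\eps=1/p^2+\eps$, so $\E V\le S_2(\PP)+o(S_2(\PP))$. By Markov, $\Pr(V\ge\delta S(\PP))\le 2S_2(\PP)/(\delta S(\PP))\ll 1/(\delta S(\PP))$, since $S_2(\PP)\le 1$.

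\textbf{Variance of $W$.} We have $\Var W=\sum_p\Var X_p+\sum_{p\ne p'}\Cov(X_p,X_{p'})$. The diagonal terms satisfy $\Var X_p\le \E X_p^2=\E X_p+\E X_p(X_p-1)\le 1/p+O(1/p^2)$. For $p\ne p'$, \Cref{cor:mix-app1}(3) gives $\E X_pX_{p'}=|\AA_p||\AA_{p'}|/(p^2p'^2)+\eps$. Comparing with the product of the first moments, each of which has error $\eps$ and main term at most $1/p$, we get $|\Cov(X_p,X_{p'})|\le 3\eps$. Summing over at most $|\PP|^2$ pairs, the total is $o(1)$ by the remark after Assumption~\ref{assum:P-prop}. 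Hence $\Var W\le(1+o(1))S(\PP)+o(1)\le 2S(\PP)$. Chebyshev then gives
\[
\Pr\bigl(W<(1-2\delta)S(\PP)\bigr)\le\Pr\bigl(|W-\E W|\ge(\delta-o(1))S(\PP)\bigr)\le \frac{C}{\delta^2S(\PP)}.
\]

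\textbf{Conclusion.} Outside these two bad events, $\mathcal{U}_{\PP}(R)\ge W-V\ge(1-3\delta)S(\PP)$, so the claimed bound holds with a universal constant $K$. The main delicacy is bookkeeping rather than a real obstacle. First, the $o(\cdot)$ terms must be absorbed uniformly, which requires $n$ large; for small $n$ the bound is trivial if $K$ is large, since $S(\PP)$ is then bounded. Second, the error terms $\eps$ must be summed over $|\PP|^2$ pairs, and this is exactly what the $\exp(\sqrt n/\log^3 n)$ cap in \ref{item:P1} allows. One should also check that $K$ does not depend on $\mu$. This holds because every $\mu$-dependent quantity enters only through $o(1)$ terms. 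The remaining task is to verify that the $-1/p^2$ and $S_2$ contributions are dominated by $\delta S(\PP)$; since $p\ge n^{B_0}$, $S_2(\PP)\le S(\PP)/n^{B_0}$, which suffices.
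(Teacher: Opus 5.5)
Your proposal is correct and follows the paper's proof essentially line by line. The paper also uses $\mathcal{U}_{\PP}(R)\ge N-N'$ with $N=\sum_p N_p$ and $N'=\sum_p N_p(N_p-1)$ (your $W$ and $V$), controls $N$ by Chebyshev via clauses (1)--(3) of \Cref{cor:mix-app1} and \Lref{lem:count-inadmissible}, and controls $N'$ by Markov; your bookkeeping, such as $\Var W\le(1+o(1))S(\PP)$ and the explicit covariance estimate, is if anything slightly tighter than the paper's.
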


\begin{proof}
For each $p\in\PP$, let $N_p$ denote the number of admissible double roots of~$R$ in $\FF_p$, and write $N\coloneqq\sum_{p\in\PP} N_p$. By \Cref{cor:mix-app1},
\begin{align*}
  \left|\E[N_p] - \frac{|\AA_p|}{p^2}\right| &\le p\exp\left(-\frac{c\sqrt{n}}{\log^2 n}\right) \le \exp\left(\frac{C\sqrt{n}}{\log^3 n}\right)\exp\left(-\frac{c\sqrt{n}}{\log^2 n}\right) \\
  &\le \exp\left(-\frac{c\sqrt{n}}{2\log^2 n}\right)
\end{align*}
for large enough $n$. By \Lref{lem:count-inadmissible}, we have $|\AA_p|=p-|\BB_p|-1\ge p-n^2\ge(1-\delta)p$ for every $p\in\PP$. Therefore,
\begin{align*}
  \E[N] &= \sum_{p\in\PP}\E[N_p] \ge \sum_{p\in\PP}\frac{|\AA_p|}{p^2} - \left|\PP\right|\exp\left(-\frac{c\sqrt{n}}{2\log^2 n}\right) \\
  &\ge (1-\delta)S(\PP) - \exp\left(\frac{C\sqrt{n}}{\log^3 n}\right)\exp\left(-\frac{c\sqrt{n}}{2\log^2 n}\right) = (1-\delta-o(1))S(\PP).
\end{align*}

We next bound the variance of $N$. For each $p\in\PP$, $N_p(N_p-1)$ counts the number of pairs $(\alpha,\beta)\in\AA_p^2$ with $\alpha\ne\beta$ such that $\alpha,\beta$ are double roots of $R$, so \Cref{cor:mix-app1} gives
\[
\E[N_p(N_p-1)] \le \frac{|\AA_p|^2}{p^4} + \exp\left(-\frac{c\sqrt{n}}{\log^2 n}\right)
\]
so
\[
\Var(N_p) \le \E[N_p^2] = \E[N_p] + \E[N_p(N_p-1)] \le \frac{1}{p} + \frac{1}{p^2} + 2\exp\left(-\frac{c\sqrt{n}}{\log^2 n}\right).
\]
The same corollary (clause (3)) shows that for distinct $p,p'\in\PP$,
\[
\Cov(N_p,N_{p'}) \le 3\exp\left(-\frac{c\sqrt{n}}{\log^2 n}\right).
\]
We thus have
\begin{align*}
  \Var(N) &\le \sum_{p\in\PP}\left(\frac{1}{p} + \frac{1}{p^2} + 2\exp\left(-\frac{c\sqrt{n}}{\log^2 n}\right)\right) + 3\left|\PP\right|^2\exp\left(-\frac{c\sqrt{n}}{\log^2 n}\right) \\
  &\le (2+o(1))S(\PP).
\end{align*}
By Chebyshev's inequality,
\begin{align*}
  \Pr(N \le (1-2\delta)S(\PP)) &\le \Pr(\left|N-\E[N]\right| \ge (\delta-o(1)) S(\PP)) \\
  &\le \frac{(2+o(1))S(\PP)}{((\delta-o(1)) S(\PP))^2} = \frac{2+o(1)}{\delta^2 S(\PP)}.
\end{align*}
Next, write $N'=\sum_{p\in\PP}N_p(N_p-1)$. By the above,
\[
\E[N'] \le \sum_{p\in\PP}\left(\frac{1}{p^2} + \exp\left(-\frac{c\sqrt{n}}{\log^2 n}\right)\right) = O(1),
\]
so by Markov's inequality we have
\[
\Pr(N' > \delta S(\PP)) \le \frac{O(1)}{\delta S(\PP)}.
\]

Finally, we claim that $\mathcal{U}_{\PP}(R) \ge N-N'$. Indeed, $N-N' = \sum_{p\in\PP}N_p(2-N_p)$; thus every $p$ with a unique admissible double root in $\FF_p$ contributes $1$ to this sum, whereas all other summands are nonpositive. Therefore
\[
\Pr\left(\mathcal{U}_{\PP}(R) < (1-3\delta)S(\PP)\right) \le \Pr(N < (1-2\delta) S(\PP)) + \Pr(N' > \delta S(\PP)) \le \frac{K}{\delta^2 S(\PP)}
\]
for some universal constant $K>0$.
\end{proof}

Next, we write $\mathcal{T}_{\PP}(R)$ for the number of primes $p\in\PP$ for which $R$ has an admissible triple root in $\FF_p$.

\begin{prop}\label{prop:no-triple-roots}
Let $\PP$ be a set of primes satisfying \ref{item:P1}. Then
\[
\Pr(\mathcal{T}_{\PP}(R) > 0) \le S_2(\PP)(1+o(1)).
\]
\end{prop}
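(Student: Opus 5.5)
A union bound over primes together with clause (4) of \Cref{cor:mix-app1} should suffice. For each $p\in\PP$ let $T_p$ be the number of admissible triple roots of $R$ in $\FF_p$, i.e.\ the number of $\alpha\in\AA_p$ with $R(\alpha)=R'(\alpha)=R''(\alpha)=0$ in $\FF_p$. Since $\mathcal{T}_{\PP}(R)>0$ exactly when some $T_p\ge 1$, Markov's inequality and a union bound give
\[
\Pr(\mathcal{T}_{\PP}(R)>0)\le\sum_{p\in\PP}\Pr(T_p\ge 1)\le\sum_{p\in\PP}\E[T_p]=\sum_{p\in\PP}\sum_{\alpha\in\AA_p}\Pr(R(\alpha)=R'(\alpha)=R''(\alpha)=0\text{ in }\FF_p).
\]

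Next, apply \Cref{cor:mix-app1}(4) with $A=\AA_p$, which is legitimate since $\PP$ satisfies \ref{item:P1} and, for $n$ large, $r=3<n^{B_0}\le p$. This bounds the inner sum by $|\AA_p|/p^3+\exp(-c\sqrt{n}/\log^2 n)\le 1/p^2+\exp(-c\sqrt{n}/\log^2 n)$, using $|\AA_p|\le p$. Summing over $p\in\PP$ yields
\[
\Pr(\mathcal{T}_{\PP}(R)>0)\le S_2(\PP)+|\PP|\exp\left(-\frac{c\sqrt{n}}{\log^2 n}\right).
\]

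It remains to absorb the error term. By the remark following Assumption~\ref{assum:P-prop} (with $A=0$, $B=1$), $|\PP|\exp(-c\sqrt{n}/\log^2 n)=o(S_2(\PP))$, uniformly over sets satisfying \ref{item:P1}. This gives $\Pr(\mathcal{T}_{\PP}(R)>0)\le S_2(\PP)(1+o(1))$.

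There is no real obstacle here. The only points to check are that the hypotheses of \Cref{cor:mix-app1} hold for $n\ge n_0(\mu)$, and that restricting to admissible roots (as in the definition of $\mathcal{T}_{\PP}$) is exactly what allows taking $A=\AA_p$ in clause (4).
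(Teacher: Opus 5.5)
Your proposal is correct and is essentially the paper's argument: bound $\Pr(T_p\ge 1)$ by $\E[T_p]$, apply clause (4) of \Cref{cor:mix-app1} with $A=\AA_p$ to get $\E[T_p]\le 1/p^2+\exp(-c\sqrt{n}/\log^2 n)$, then sum over $p\in\PP$ and absorb the error term using \ref{item:P1}. The only cosmetic difference is that the paper bounds $\E[\mathcal{T}_{\PP}(R)]$ and applies Markov's inequality once, while you use a union bound and Markov prime by prime; this is the same computation.
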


\begin{proof}
For each $p\in\PP$, let $T_p$ denote the number of admissible triple roots of $R$ in $\FF_p$. By \Cref{cor:mix-app1},
\[
\E[T_p] \le \frac{|\AA_p|}{p^3} + \exp\left(-c\frac{(1-\norm{\mu}_2^2)\sqrt{n}}{\log^2 n}\right) \le \frac{1}{p^2} + \exp\left(-c\frac{(1-\norm{\mu}_2^2)\sqrt{n}}{\log^2 n}\right).
\]
Summing over $p$,
\begin{align*}
  \E[\mathcal{T}_{\PP}(R)] &= \sum_{p\in\PP}\E[\ind{\set{T_p\ge 1}}] \le \sum_{p\in\PP}\E[T_p] \\
  &\le \sum_{p\in\PP}\frac{1}{p^2} + \left|\PP\right|\exp\left(-c\frac{(1-\norm{\mu}_2^2)\sqrt{n}}{\log^2 n}\right) = S_2(\PP)(1 + o(1)).
\end{align*}
The proposition then follows from Markov's inequality.
\end{proof}

Finally, we will need the following.
\begin{defn}\label{def:Bad}For a prime $p$ and a positive integer $k\ge 1$, let $B_{p^k}^{\ad}$ denote the event that there is an admissible double root of $R$ in $\FF_{p^k}$.
\end{defn}

\begin{prop}\label{prop:no-admissible-double-roots-ext}
Let $\PP$ be a finite nonempty set of primes satisfying \ref{item:P1}, and let $\ksm\ge 2$ be a positive integer such that
\[
p^{\ksm}\le\exp\left(\frac{\sqrt{n}}{\log^3 n}\right)
\]
for every $p\in\PP$. Then
\[
\Pr\left(\exists p\in\PP\;\exists 2\le k\le \ksm\;\exists \alpha\in\AA_{p^k}:R(\alpha)=R'(\alpha)=0\right) \le 2S_2(\PP)(1+o(1)).
\]
\end{prop}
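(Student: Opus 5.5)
Fix $p\in\PP$ and $2\le k\le\ksm$. We apply \Pref{prop:full-mixing-bound} with $\mathcal{V}=\FF_{p^k}$ (so $M=T=1$, $D=k$, $Q=p$), $r=2$, and $A=\AA_{p^k}$. Every admissible $\alpha\in\FF_{p^k}$ lies in $\FFF_{p^k}$ and is nonzero, so it is generic in the sense of Definition~\ref{def:generic}, and $\AA_{p^k}\sub\AA_{\mathcal{V}}$.

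We first check the hypotheses, exactly as in the proof of \Cref{cor:mix-app1}. We have $Q^{rD}=p^{2k}\le\exp(2\sqrt{n}/\log^3 n)$, which gives the required lower bound on $n$ for large $n$. The condition $\log Q^{rD}\ge(1-\norm{\mu}_2^2)^{-1}$ follows from $p\ge n^{B_0}$. The support condition holds since $p>n^{B_0}>2H_\mu$. The conclusion, with the union bound over $\alpha$ handled by summing, is
\[
\Pr(\exists\alpha\in\AA_{p^k}:R(\alpha)=R'(\alpha)=0)\le\sum_{\alpha\in\AA_{p^k}}\Pr(R(\alpha)=R'(\alpha)=0)\le\frac{|\AA_{p^k}|}{p^{2k}}+\exp\left(-\frac{c\sqrt n}{\log^2 n}\right).
\]
Since $|\AA_{p^k}|\le p^k$, the main term is at most $p^{-k}$.

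Summing over $2\le k\le\ksm$ gives $\sum_{k\ge2}p^{-k}=\frac{1}{p(p-1)}\le\frac{1+o(1)}{p^2}$, uniformly since $p\ge n^{B_0}$. The error terms contribute at most $\ksm\exp(-c\sqrt n/\log^2 n)$ for each $p$. Here $\ksm\le\sqrt n$, because $p\ge2$ and $p^{\ksm}\le\exp(\sqrt n/\log^3 n)$. A union bound over $p\in\PP$ then bounds the probability in question by
\[
(1+o(1))S_2(\PP)+|\PP|\sqrt n\,\exp\left(-\frac{c\sqrt n}{\log^2 n}\right)=(1+o(1))S_2(\PP),
\]
using the remark following Assumption~\ref{assum:P-prop}. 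This is at most $2S_2(\PP)(1+o(1))$, with room to spare; the factor $2$ would be needed only if the $k=2$ count were handled more crudely.

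The only delicate point is the uniformity of the constants. The constant $c$ in \Pref{prop:full-mixing-bound} is absolute, so the error bound is the same for every $k$. The smallness is needed uniformly in $p$ and $k$, and this is why we must require $p^{\ksm}\le\exp(\sqrt n/\log^3 n)$ rather than merely $p$ small. I expect no real obstacle beyond verifying that genericity and admissibility match the hypotheses of the proposition.
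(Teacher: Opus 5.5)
Your proof is correct and is essentially the paper's argument: apply \Pref{prop:full-mixing-bound} with $\mathcal{V}=\FF_{p^k}$ and $r=2$, sum over $\alpha$, over $2\le k\le\ksm$ and over $p\in\PP$, and conclude with a first-moment (union bound / Markov) estimate. The differences are cosmetic and both favour your version. The paper applies the proposition one $\alpha$ at a time, so its error term picks up a factor $p^k\le\exp(\sqrt{n}/\log^3 n)$, which is still negligible, and it bounds $\sum_{k\ge 2}p^{-k}$ crudely by $2/p^2$; applying the proposition to the whole set $A=\AA_{p^k}$ and summing the geometric series exactly gives you the slightly sharper bound $(1+o(1))S_2(\PP)$.
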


\begin{proof}
In a similar manner to \Cref{cor:mix-app1}, we can apply \Pref{prop:full-mixing-bound} and deduce that for any $p\in\PP$, any $2\le k\le \ksm$, and any $\alpha\in\AA_{p^k}$,
\[
\Pr(R(\alpha)=R'(\alpha)=0) \le \frac{1}{p^{2k}} + \exp\left(-\frac{c\sqrt{n}}{\log^2 n}\right).
\]
For each $p,k$ as above, let $N_{p^k}$ denote the number of admissible double roots of $R$ in $\FF_{p^k}$, and let $N=\sum_{p\in\PP}\sum_{k=2}^{\ksm}N_{p^k}$. Therefore
\begin{align*}
  \E[N] &= \sum_{p\in\PP}\sum_{k=2}^{\ksm}\sum_{\alpha\in\AA_{p^k}} \Pr(R(\alpha)=R'(\alpha)=0) \\
  &\le \sum_{p\in\PP}\sum_{k=2}^{\ksm}\sum_{\alpha\in\AA_{p^k}}\left(\frac{1}{p^{2k}} + \exp\left(-\frac{c\sqrt{n}}{\log^2 n}\right)\right) \\
  &\le \sum_{p\in\PP}\sum_{k=2}^{\ksm}\left(\frac{1}{p^k} + p^k\exp\left(-\frac{c\sqrt{n}}{\log^2 n}\right)\right) \\
  &\le \sum_{p\in\PP}\left(\frac{2}{p^2} + n\exp\left(\frac{C\sqrt{n}}{(\log n)^3}\right)\exp\left(-\frac{c\sqrt{n}}{\log^2 n}\right)\right) \\
  &\le \sum_{p\in\PP}\frac{2}{p^2} + n\exp\left(\frac{2C\sqrt{n}}{(\log n)^3}\right)\exp\left(-\frac{c\sqrt{n}}{\log^2 n}\right) = 2S_2(\PP)(1+o(1)).
\end{align*}
Therefore the result follows by Markov's inequality.
\end{proof}

\section{Double roots modulo primes squared}\label{sec:mod-p^2}

We will also need to show that, with high probability, an admissible double root of $R$ in $\FF_p$ cannot be lifted to a double root of $R$ modulo $p^2$. We identify every $a\in\FF_p$ with its representative in $\set{0,\dots,p-1}$. For a polynomial $P\in\Z[x]$ and a double root $a\in\FF_p$ of $P$, we say that $a$ lifts to a double root modulo $p^2$ if there exists $b\in\FF_p$ such that
\[
P(a+pb)=P'(a+pb)=0\pmod{p^2}.
\]

We begin with the following elementary observation.

\begin{lem}\label{lem:double-root-lift}
Let $P\in\Z[x]$ be a monic polynomial. Let $p$ be a prime, and let $a\in\FF_p$ be a double root of the reduction $\overline{P}$ of $P$ modulo $p$. If $a$ lifts to a root of $P$ modulo $p^2$, then $P(a_0)\equiv 0\pmod{p^2}$ for every lift $a_0$ of $a$ in $\Z/p^2\Z$. If moreover $a$ is not a triple root of $\overline{P}$, then there is a unique lift $a_0 \in \Z/p^2\Z$ with $P(a_0)\equiv P'(a_0) \equiv 0\pmod{p^2}$.
\end{lem}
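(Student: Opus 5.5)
The plan is a direct Taylor expansion in $\Z/p^2\Z$. Fix any integer lift $a_0$ of $a$, so every lift of $a$ has the form $a_0+pb$ with $b\in\{0,\dots,p-1\}$. Since $P$ has integer coefficients, we can write $P(x+y)=\sum_{i}P_i(x)y^i$ with $P_i=P^{(i)}/i!\in\Z[x]$ (the divided derivatives have integer coefficients). Reducing modulo $p^2$, and using that $p^2$ kills $y^i$ for $i\ge 2$ when $y=pb$, this gives
\[
P(a_0+pb)\equiv P(a_0)+pb\,P'(a_0)\pmod{p^2},\qquad P'(a_0+pb)\equiv P'(a_0)+pb\,P''(a_0)\pmod{p^2}.
\]
For the second congruence we apply the same expansion to $P'$.

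For the first claim, $a$ being a double root of $\overline{P}$ means $p\mid P'(a_0)$. Hence $pb\,P'(a_0)\equiv 0\pmod{p^2}$, and so $P(a_0+pb)\equiv P(a_0)\pmod{p^2}$ for every $b$. In other words, $P$ modulo $p^2$ is constant on the fibre above $a$. If some lift is a root modulo $p^2$, then $P$ vanishes modulo $p^2$ at every lift.

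For the second claim, $P$ already vanishes modulo $p^2$ at all lifts, so it remains to solve $P'(a_0+pb)\equiv 0\pmod{p^2}$. Write $P'(a_0)=pc$. The congruence becomes $p(c+bP''(a_0))\equiv 0\pmod{p^2}$, that is, $c+b\,P''(a_0)\equiv 0\pmod p$. If $a$ is not a triple root, then $\overline{P}''(a)\ne 0$. Strictly speaking, for a root of multiplicity exactly two this uses $p>2$, since $\overline{P}''(a)=2\cdot(\text{unit})$. The primes in question are large, so $p>2$ is assumed here, and that is the only point needing care. Then $P''(a_0)$ is invertible modulo $p$, and the linear equation has exactly one solution $b\in\FF_p$. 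This gives existence and uniqueness of $a_0+pb\in\Z/p^2\Z$.

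Monicity plays no role in this argument. There is no genuine obstacle; the only subtle points are the integrality of the divided-derivative expansion and the characteristic-two caveat.
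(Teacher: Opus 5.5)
Your proposal is correct and follows the paper's proof: the first-order expansion modulo $p^2$ shows $P$ is constant on the fibre over $a$, and expanding $P'$ reduces uniqueness to the linear congruence $c+bP''(a_0)\equiv 0 \pmod p$. Your caveat is also well taken. If ``triple root'' means multiplicity at least three, then $\overline{P}''(a)\neq 0$ needs $p>2$, a point the paper passes over; for example, $P=x^2$ with $p=2$ has two lifts of $0$ satisfying both congruences. This is harmless, since the lemma is only applied to large odd primes.
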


\begin{proof}
For every $b\in\FF_p$, we have
\[
P(a+pb)\equiv P(a)+pbP'(a)\pmod{p^2}
\]
If $a$ is a double root of $P$ modulo $p$, we have $P'(a)\equiv 0\pmod p$, and therefore
\[
P(a+pb)\equiv P(a)\pmod{p^2}.
\]
This proves the first assertion.

Suppose now that $a$ is not a triple root of $\overline{P}$ and let $a_0\in \Z/p^2\Z$ be a lift of $a$ with $P(a_0)\equiv 0\pmod{p^2}$. Write $P'(a_0)=pu\pmod{p^2}$ for some $u\in\FF_p$. Note that
\[
P'(a_0+pb)\equiv P'(a_0)+pbP''(a_0)\pmod{p^2}.
\]
So $P'(a_0+pb)\equiv 0\pmod{p^2}$ if and only if $u+bP''(a)=0\pmod p$. Since $a$ is not a triple root, $P''(a)\not\equiv 0\pmod p$, so this equation has a unique solution $b\in\FF_p$.
\end{proof}

We next prove a mixing estimate for the pair
\[
\left(R(a)\pmod{p^2},R'(a)\pmod p\right).
\]
We use similar notations as in the previous sections. For $a\in\FF_p^\times$ and integers $d\ge 1$ and $l\ge 0$, let $\rho_a^{(d,d+l)}$ denote the law on $\mathcal{W}_p\coloneqq\Z/p^2\Z\times\FF_p$ of
\[
\left(R_{d,d+l}(a)\pmod{p^2},R_{d,d+l}'(a)\pmod p\right).
\]
We further write $\rho_a^{(n)}$ for the law of
\[
\left(R(a)\pmod{p^2},R'(a)\pmod p\right).
\]
We emphasize that $a$ is identified with its representative in $\set{0,\dots,p-1}$, and all calculations with $a$ in the remainder of the section are done modulo $p^2$.

We identify the dual group of $\mathcal{W}_p$ with $\mathcal{W}_p$ by associating to
$\xi=(\xi_1,\xi_2)\in\mathcal{W}_p$ the character
\[
\chi_\xi(x,y)=e_{p^2}(\xi_1x+p\xi_2y).
\]
For $j\ge0$, write
\[
S_j(\xi)\coloneqq \xi_1a^j+p\xi_2ja^{j-1}\in\Z/p^2\Z.
\]
Also, for $x\in\Z/p^2\Z$, let $[x]_{p^2}^{\sim}$ denote its unique representative in
$\left(-\frac{p^2}{2},\frac{p^2}{2}\right]$.

The same argument used in \Lref{lem:fourier-mu-trans} and \Cref{cor:fourier-nu-bound} shows that
\begin{equation}\label{eq:fourier-p2}
\left|\whrho{_a^{(d,d+l)}}(\xi)\right| \le \exp\left(-\sum_{u,v\in\Z}\mu(u)\mu(v) \sum_{j=d}^{d+l}\frac{\left([S_j((u-v)\xi)]_{p^2}^{\sim}\right)^2}{p^4}\right).
\end{equation}

The following is the analogue of \Pref{prop:Sj-tilde-bound} that we need.

\begin{prop}\label{prop:Sj-tilde-bound-p2}
Let $p$ be a sufficiently large prime, let $a\in\FF_p^\times$, and let $0\ne\xi=(\xi_1,\xi_2)\in\Z/p^2\Z\times\FF_p$. Let $L\ge \Ckon\log p^4(\log\log p^4)^4$, and suppose that
\[
\sum_{j=0}^L\left([S_j(\xi)]_{p^2}^{\sim}\right)^2 \le \frac{p^4}{8\log(4L)}.
\]
Then the multiplicative order of $a$ is at most $\Cord\log p^4\log\log\log p^4$.
\end{prop}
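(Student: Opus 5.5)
We will follow the proof of \Pref{prop:Sj-tilde-bound}, with $Q$ replaced by $p^2$ and $Q^{rD}$ replaced by $p^4$. The only new ingredient is an analogue of \Lref{lem:apply-pol} for the ring $\Z/p^2\Z$.

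\textbf{Step 1: a short linear relation.} Set $E=3\floor{\log p^4}$, so that $2^E>p^4$ and $E\le L/3$. Let $\widetilde{S_j}=[S_j(\xi)]^{\sim}_{p^2}$. By Hoeffding's inequality and the hypothesis, at least $\frac12 2^{E+1}$ sign vectors $\sigma\in\set{\pm1}^{E+1}$ satisfy $|\sum_{j\le E}\sigma_j\widetilde{S_{j+m}}|<p^2/2$ for all $0\le m\le L-E$. Since $2^E>p^4=|\Z/p^2\Z|^2$, pigeonhole applied to the pairs $(\sum_j\sigma_jS_j,\sum_j\sigma_jS_{j+1})\in(\Z/p^2\Z)^2$ gives $a_j\in\set{0,\pm1}$, not all zero, with
\[
\sum_{j=0}^Ea_jS_{j+m}=0 \text{ in } \Z/p^2\Z \quad\text{for } m=0,1.
\]
Write $P_1=\sum a_jx^j$.

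\textbf{Step 2: analogue of \Lref{lem:apply-pol}.} Note that $S_{j+m}=\xi_1a^{j+m}+p\xi_2(j+m)a^{j+m-1}$. A direct computation gives
\[
\sum_j a_jS_{j+m}=a^m\bigl(\xi_1P_1(a)+p\xi_2P_1'(a)\bigr)+p\xi_2ma^{m-1}P_1(a) \pmod{p^2}.
\]
Call the bracket $X$ and set $Y=p\xi_2P_1(a)$. The relations for $m=0,1$ read $X=0$ and $aX+Y=0$. Since $a$ is a unit mod $p^2$, these force $X=Y=0$, and hence the relation holds for every $m\ge0$.

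\textbf{Step 3: conclude $P_1(a)\equiv0\pmod p$.} If $\xi_1\not\equiv0\pmod p$, then reducing $X$ mod $p$ gives $P_1(a)\equiv0$. If $\xi_1=p\xi_1'$, then $\xi_2\ne0$ or $\xi_1'\ne0$, and we argue by cases:
\begin{itemize}
\item If $\xi_2\ne0$, then $Y=0$ gives $P_1(a)\equiv0\pmod p$.
\item If $\xi_2=0$, then $X=p\xi_1'P_1(a)$ with $\xi_1'\ne0$, and again $P_1(a)\equiv0\pmod p$.
\end{itemize}
In all cases $a$ is a root of $P_1$ mod $p$, and the "for all $m$" conclusion lets us restore the tildes as in \Pref{prop:Sj-tilde-bound}. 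Thus $P_1\in\Lambda_E(\{\widetilde{S_j}\}_{j=0}^L)$.

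\textbf{Step 4: the rest runs verbatim.} \Lref{lem:approx-ideal} gives the minimal polynomial $P_0$, which divides $P_1$, and Step 2 applies to $P_0$ as well, so $P_0(a)\equiv0\pmod p$. Take an irreducible factor $P_{a}$ with $P_a(a)\equiv 0$. Repeat the pigeonhole with a polynomial $\widetilde P_2(x^\ell)$ for a prime $\ell\in(s,2s]$ with $s=\floor{L/6E}$; this bounds $M(P_a)\le(E+1)^{1/2\ell}$. Dobrowolski's theorem then forces $P_a$ to be cyclotomic, since $a\ne0$ rules out $P_a=x$. Hence the order of $a$ is $\ll E\log\log E$, which is the claimed bound $\Cord\log p^4\log\log\log p^4$.

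The main obstacle is Step 2–3: the ring $\Z/p^2\Z$ is not a field, so the confluent Vandermonde argument of \Lref{lem:apply-pol} must be replaced by the explicit computation above. The case $\xi\equiv0\pmod p$ in the first coordinate needs care, and the hypothesis "$p$ sufficiently large" is used only to make $p>r=2$ and to control the constants.
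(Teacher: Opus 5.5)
Your proof is correct and follows the paper's route. You run the Konyagin/Breuillard--Varj\'u argument of \Pref{prop:Sj-tilde-bound} with $Q^{rD}$ replaced by $p^4$, and replace \Lref{lem:apply-pol} by the explicit identity for $\sum_j a_jS_{j+m}$, which shows that the relations for $m=0,1$ propagate to all $m$ and force $P(a)\equiv 0\pmod p$. The only difference is minor. The paper handles $p\mid\xi_1$ by factoring out $p$ and invoking \Pref{prop:Sj-tilde-bound} over $\FF_p$ with $r=2$. Your reduction to $X=Y=0$ treats every case of $\xi$ in a single computation, which is slightly cleaner.
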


\begin{proof}
Suppose first that $p\mid\xi_1$, and write $\xi_1=p\overline{\xi}_1$ for some $\overline{\xi}_1\in\set{0,\dots,p-1}$. Then $S_j(\xi)\equiv p\left(\overline{\xi}_1a^j+\xi_2ja^{j-1}\right)\pmod{p^2}$. Since $(\overline{\xi}_1,\xi_2)\ne 0$, we may apply \Pref{prop:Sj-tilde-bound} with $\mathcal{V}=\FF_p$ and $r=2$. This shows that the multiplicative order of $a$ is at most $\Cord\log p^2\log\log\log p^2$, and hence the claimed bound.

Suppose now that $p\nmid\xi_1$. We explain the only modification needed in the proof of \Pref{prop:Sj-tilde-bound}. Let $P(x)=b_0+\cdots+b_ex^e\in\Z[x]$, and suppose that
\begin{equation}\label{eq:recurrence-p2-first}
\sum_{i=0}^eb_iS_{i+m}(\xi)\equiv 0\pmod{p^2}
\end{equation}
for $m=0,1$. A direct calculation gives
\begin{equation}\label{eq:recurrence-p2-formula}
\sum_{i=0}^eb_iS_{i+m}(\xi) \equiv a^m\left(\xi_1P(a)+p\xi_2\left(ma^{-1}P(a)+P'(a)\right)\right) \pmod{p^2}.
\end{equation}

If $\xi_2=0$, then \eqref{eq:recurrence-p2-first} with $m=0$ gives $P(a)\equiv 0\pmod{p^2}$, since $p\nmid\xi_1$. It follows immediately from \eqref{eq:recurrence-p2-formula} that \eqref{eq:recurrence-p2-first} holds for every $m\ge0$.

Suppose that $\xi_2\ne0$. Using \eqref{eq:recurrence-p2-formula} for $m=0,1$, multiplying the $m=0$ expression by $a$ and subtracting the two resulting equations, we obtain $p\xi_2a^{-1}P(a)\equiv 0\pmod{p^2}$. Therefore $P(a)=0\pmod p$. Write $P(a)=pA\pmod{p^2}$. The equation corresponding to $m=0$ now gives $\xi_1A+\xi_2P'(a)=0\pmod p$. For every $m\ge 0$, it follows that
\[
\xi_1P(a)+p\xi_2\left(ma^{-1}P(a)+P'(a)\right) \equiv p\left(\xi_1A+\xi_2P'(a)\right) \equiv  0\pmod{p^2}.
\]
Thus, in both cases, $P(a)\equiv 0\pmod p$ and \eqref{eq:recurrence-p2-first} holds for every $m\ge0$.

We may now repeat the proof of \Pref{prop:Sj-tilde-bound}. We take $E=3\floor{\log p^4}$. In the pigeonhole argument there are at most $p^4$ possible pairs of residues corresponding to $m=0,1$. The preceding calculation replaces the application of \Lref{lem:apply-pol}: it shows that the polynomial obtained from the pigeonhole argument vanishes at $a$ modulo $p$, and that the corresponding recurrence holds for every $m\ge0$. The rest of the proof, including the construction of the integer recurrence and the Mahler measure argument, is unchanged, with $Q^{rD}$ replaced by $p^4$. We conclude that the multiplicative order of $a$ is at most $\Cord\log p^4\log\log\log p^4$, as required.
\end{proof}

We can now deduce the required bound on the Fourier coefficients.

\begin{cor}\label{cor:fourier-p2}
There is an absolute constant $c>0$ such that the following holds. Let $p$ be a prime, let $a\in\FF_p^\times$, and suppose that the multiplicative order of $a$ is at least $\Cord\log p^4\log\log\log p^4$. Assume further that $\supp(\mu)\sub(-\frac{p}{2},\frac{p}{2})$.

Then, for every $0\ne\xi\in\mathcal{W}_p$, every $d\ge1$, and every $l\ge \Ckon\log p^4(\log\log p^4)^4$, we have
\[
\left|\whrho{_a^{(d,d+l)}}(\xi)\right| \le \exp\left(-c\frac{(1-\norm{\mu}_2^2)l}{\log p^4(\log\log p^4)^5}\right).
\]
\end{cor}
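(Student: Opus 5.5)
The plan is to mirror the proofs of \Cref{cor:final-fourier-nu-bound} and \Cref{cor:iterated-fourier-nu-bound}, using \Pref{prop:Sj-tilde-bound-p2} in place of \Pref{prop:Sj-tilde-bound} and \eqref{eq:fourier-p2} in place of \Cref{cor:fourier-nu-bound}. Set $L\coloneqq\ceil{\Ckon\log p^4(\log\log p^4)^4}$. The first step is to remove the shift by $d$. For this, write $\mathbf{w}_j=(a^j,ja^{j-1})\in\mathcal{W}_p$, with $a^j$ computed modulo $p^2$ and $ja^{j-1}$ modulo $p$. Then $S_j(\xi)=\chi$-pairing of $\xi$ with $\mathbf{w}_j$, and the map $B(x,y)=(ax,ay+x\bmod p)$ is a well-defined endomorphism of $\mathcal{W}_p$ with $B\mathbf{w}_j=\mathbf{w}_{j+1}$. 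Since $a$ is invertible modulo $p^2$, $B$ is an automorphism, so its adjoint $B^*$ with respect to the pairing $(\xi,v)\mapsto\xi_1x+p\xi_2y$ is also an automorphism. This gives $S_{d+j}(\xi)=S_j((B^*)^d\xi)$ with $(B^*)^d\xi\neq0$. Concretely, $B^*(\xi_1,\xi_2)=(a\xi_1+p\xi_2,\,a\xi_2)$ up to the identification, and one checks this directly from $\xi_1a^{j+1}+p\xi_2(j+1)a^j$.

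The second step fixes $u\neq v$ in $\supp\mu$. Because $|u-v|<p$, the element $u-v$ is a unit modulo $p^2$, so $\eta=(u-v)(B^*)^d\xi\neq0$. Since the order of $a$ exceeds $\Cord\log p^4\log\log\log p^4$, the contrapositive of \Pref{prop:Sj-tilde-bound-p2}, applied to $\eta$, gives
\[
\sum_{j=d}^{d+L}\bigl([S_j((u-v)\xi)]^{\sim}_{p^2}\bigr)^2>\frac{p^4}{8\log(4L)}.
\]
Summing over pairs $u\ne v$ in \eqref{eq:fourier-p2} then bounds every window of length $L+1$ by $\exp\bigl(-(1-\norm{\mu}_2^2)/(8\log(4L))\bigr)$.

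The third step splits $\{d,\dots,d+l\}$ into $K=\floor{(l+1)/(L+1)}$ disjoint windows of length $L+1$ and discards the remainder, using that all Fourier coefficients have modulus at most $1$. Independence of the coefficients makes $\whrho{_a^{(d,d+l)}}$ the product over the windows, which gives
\[
\exp\bigl(-K(1-\norm{\mu}_2^2)/(8\log 4L)\bigr).
\]
Since $l\ge L$, we have $K\gg l/L$, and $\log(4L)\ll\log\log p^4$, so this yields the stated bound with an absolute $c$.

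The main obstacle is the first step: checking that the shift operator acts correctly on the mixed modulus group $\Z/p^2\Z\times\FF_p$. The term $p\xi_2 ja^{j-1}$ only depends on $ja^{j-1}$ modulo $p$, so $B$ is well defined. The adjoint must also preserve nonvanishing, which holds because $B$ is invertible as $a$ is a unit. If the adjoint bookkeeping proves awkward, I would instead apply \Pref{prop:Sj-tilde-bound-p2} directly to the shifted sequence $j\mapsto S_{d+j}$. Its proof only uses the recurrence structure, which is shift-invariant, and that the sequence is of the form $S_j(\xi')$ for some nonzero $\xi'$, which is exactly the computation above.
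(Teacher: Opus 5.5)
Your proposal is correct and follows the paper's proof. Your $(B^*)^d$ equals $a^d I + d a^{d-1}N$ with $N(\xi_1,\xi_2)=(p\xi_2,0)$, which is exactly the paper's automorphism $\Theta_d(\xi_1,\xi_2)=(a^d\xi_1+pda^{d-1}\xi_2,\,a^d\xi_2)$ with $S_{d+j}(\xi)=S_j(\Theta_d(\xi))$. The paper then likewise applies \Pref{prop:Sj-tilde-bound-p2} to $\Theta_d((u-v)\xi)\neq 0$ for each $u\neq v$ and multiplies over disjoint windows of length $L+1$, as in \Cref{cor:iterated-fourier-nu-bound}.
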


\begin{proof}
For every $d\ge 0$, define $\Theta_d\colon\mathcal{W}_p\to\mathcal{W}_p$ by
\[
\Theta_d(\xi_1,\xi_2) = \left(a^d\xi_1+pda^{d-1}\xi_2,\,a^d\xi_2\right).
\]
The first coordinate is taken modulo $p^2$, and the second coordinate is taken modulo~$p$. The map $\Theta_d$ is an automorphism of $\mathcal{W}_p$, and a direct calculation similar to \Lref{lem:linear-transform} gives
\begin{equation}\label{eq:shift-p2}
S_{d+j}(\xi)=S_j(\Theta_d(\xi))
\end{equation}
for every $j\ge0$.

Let $L=\ceil{\Ckon\log p^4(\log\log p^4)^4}$. Fix $u\ne v$ in the support of $\mu$. Since $\supp(\mu)\sub\left(-\frac{p}{2},\frac{p}{2}\right)$, the integer $u-v$ is nonzero modulo $p$. It follows that $\Theta_d((u-v)\xi)\ne 0$. By \Pref{prop:Sj-tilde-bound-p2} and the assumed lower bound on the multiplicative order of $a$,
\[
\sum_{j=0}^L \left([S_j(\Theta_d((u-v)\xi))]_{p^2}^{\sim}\right)^2 > \frac{p^4}{8\log(4L)}.
\]
Using \eqref{eq:shift-p2}, we obtain
\[
\sum_{j=d}^{d+L} \frac{\left([S_j((u-v)\xi)]_{p^2}^{\sim}\right)^2}{p^4} > \frac{1}{8\log(4L)}.
\]
Substituting this bound in \eqref{eq:fourier-p2} gives
\[
\left|\whrho{_a^{(d,d+L)}}(\xi)\right| < \exp\left(-\frac{1}{8\log(4L)}\sum_{u\ne v}\mu(u)\mu(v)\right) \le \exp\left(-\frac{1-\norm{\mu}_2^2}{8\log(4L)}\right).
\]

We now divide $\set{d,\dots,d+l}$ into consecutive intervals of length $L+1$ and discard the remaining indices. Using the independence of the coefficients, as in the proof of \Cref{cor:iterated-fourier-nu-bound}, we obtain
\begin{equation*}
\left|\whrho{_a^{(d,d+l)}}(\xi)\right| \le \exp\left(-\frac{cl}{\log p^4(\log\log p^4)^5}\right).
\qedhere
\end{equation*}
\end{proof}

The following is the mixed-modulus version of the first part of \Cref{cor:mix-app1}.

\begin{prop}\label{prop:mix-p2}
There is an absolute constant $c>0$ such that the following holds. Let $p\le\exp\left(\frac{\sqrt{n}}{\log^3 n}\right)$ be a prime satisfying $\supp(\mu)\sub\left(-\frac{p}{2},\frac{p}{2}\right)$. Then, for every nonempty $A\sub\AA_p$,
\[
\left|\sum_{a\in A}\Pr\left(R(a)=0\spmod{p^2},\,R'(a)=0\spmod p\right) - \frac{|A|}{p^3}\right| \le \exp\left(-\frac{c\sqrt{n}}{\log^2 n}\right).
\]
\end{prop}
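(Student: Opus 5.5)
We follow the proof of \Pref{prop:full-mixing-bound} essentially verbatim, with $\mathcal{V}^r$ replaced by $\mathcal{W}_p=\Z/p^2\Z\times\FF_p$ (so $|\mathcal{W}_p|=p^3$) and \Cref{cor:iterated-fourier-nu-bound} replaced by \Cref{cor:fourier-p2}.

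\emph{Fourier inversion.} By Fourier inversion on $\mathcal{W}_p$,
\[
\left|\sum_{a\in A}\rho_a^{(n)}(0)-\frac{|A|}{p^3}\right|\le\frac{1}{p^3}\sum_{a\in A}\sum_{0\ne\xi\in\mathcal{W}_p}\left|\whrho{_a^{(n)}}(\xi)\right|.
\]

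\emph{Splitting the coefficients.} Split the coefficients into a short initial block $\{1,\dots,2l_0\}$, a long block $\{2l_0+1,\dots,n-1\}$, and the leading and constant coefficients. Take $l_0=\lceil \log(p^3)/H_2(\mu)\rceil$. By independence,
\[
\left|\whrho{_a^{(n)}}(\xi)\right|\le\left|\whrho{_a^{(1,2l_0)}}(\xi)\right|\cdot\left|\whrho{_a^{(2l_0+1,n-1)}}(\xi)\right|.
\]

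\emph{The long block.} Every $a\in\AA_p$ has multiplicative order at least $n$. Since $p\le\exp(\sqrt n/\log^3 n)$, this order exceeds $\Cord\log p^4\log\log\log p^4$ for large $n$, and moreover $l_1=n-2l_0-2\ge n/2\ge\Ckon\log p^4(\log\log p^4)^4$. So \Cref{cor:fourier-p2} applies and gives, uniformly in $\xi\ne0$,
\[
\left|\whrho{_a^{(2l_0+1,n-1)}}(\xi)\right|\le\exp\left(-\frac{cn}{\log p^4(\log\log p^4)^5}\right)\le\exp\left(-\frac{c'\sqrt n}{\log^2 n}\right).
\]

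\emph{The short block.} We need an average $L^2$ bound as in \eqref{eq:fourier-bound1}. By Parseval and Cauchy--Schwarz, it suffices to bound $\sum_{a\in\FF_p}\|\rho_a^{(d,d+l_0-1)}\|_2^2$ by $\mathrm{poly}(l_0)$. This sum equals
\[
\sum_a\Pr\left(R_{d,\cdot}(a)\equiv\widetilde R_{d,\cdot}(a)\ (p^2),\ R'_{d,\cdot}(a)\equiv \widetilde R'_{d,\cdot}(a)\ (p)\right)\le\E\left|\{a\in\FF_p: (R_{d,\cdot}-\widetilde R_{d,\cdot})(a)\equiv0\ (p)\}\right|.
\]
Here we drop to the condition modulo $p$ on the value alone. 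If the two polynomials differ, their difference is nonzero mod $p$ because $\supp\mu\sub(-p/2,p/2)$, so it has at most $l_0$ roots. If they coincide, which happens with probability $\|\mu\|_2^{2l_0}$, all $p$ values of $a$ count. This gives a bound $l_0+p\|\mu\|_2^{2l_0}\le 2l_0$.

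One subtlety is that the relevant normalization involves $|\mathcal{W}_p|=p^3$ but only $p$ values of $a$. We need to check that Parseval gives
\[
\frac{1}{p^3}\sum_a\sum_\xi|\hat\rho_a|^2=\sum_a\|\rho_a\|_2^2\le 2l_0,
\]
and it does. Hence
\[
\frac1{p^3}\sum_a\sum_\xi\left|\whrho{_a^{(1,2l_0)}}(\xi)\right|\le 2l_0,
\]
with $l_0\ll\log p\ll\sqrt n$.

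\emph{Conclusion.} Multiplying the two bounds, the error is at most
\[
2l_0\exp\left(-\frac{c'\sqrt n}{\log^2 n}\right)\le\exp\left(-\frac{c\sqrt n}{\log^2 n}\right)
\]
after shrinking $c$.

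The main obstacle is already handled by \Pref{prop:Sj-tilde-bound-p2}, namely the non-degeneracy of the mixed-modulus recurrence. What remains is the bookkeeping. First, the hypotheses $\log p^4\ge(1-\norm{\mu}_2^2)^{-1}$ and the constants in \Cref{cor:fourier-p2} must be absorbed for $n$ large; in particular the factor $(1-\norm{\mu}_2^2)$ is a constant depending on $\mu$ and can be absorbed into $c$. Second, $p$ must be large enough that the ``sufficiently large prime'' hypothesis holds. Primes below a fixed threshold are excluded implicitly, since admissibility forces $p>n$.
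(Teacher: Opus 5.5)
Your proposal is correct and follows the paper's proof essentially step for step. It uses the same ingredients: the $L^2$ bound from counting roots of $R_{d,d+l}-\widetilde R_{d,d+l}$ modulo $p$, Plancherel and Cauchy--Schwarz on the initial block, and \Cref{cor:fourier-p2} on the long block; your choice $l_0=\lceil\log(p^3)/H_2(\mu)\rceil$ instead of the paper's $\lceil\log p/H_2(\mu)\rceil$ is immaterial. Your remark that $A\ne\emptyset$ forces $p>n$, which makes the ``sufficiently large prime'' hypothesis of \Pref{prop:Sj-tilde-bound-p2} automatic, spells out a point the paper leaves implicit.
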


\begin{proof}
We follow the proof of \Pref{prop:full-mixing-bound}. We first establish the required $L^2$-bound. Let $\widetilde{R}_{d,d+l}$ be an independent copy of $R_{d,d+l}$. Then
\begin{align*}
  \sum_{a\in\FF_p^\times}\norm{\rho_a^{(d,d+l)}}_2^2 &= \sum_{a\in\FF_p^\times}\Pr\left(\begin{array}{c} R_{d,d+l}(a)=\widetilde{R}_{d,d+l}(a)\spmod{p^2},\\
  R_{d,d+l}'(a)=\widetilde{R}_{d,d+l}'(a)\spmod p
  \end{array}\right) \\
  &\le \E\left|\set{a\in\FF_p^\times \suchthat R_{d,d+l}(a)=\widetilde{R}_{d,d+l}(a)\spmod p}\right|.
\end{align*}
If $R_{d,d+l}\ne\widetilde{R}_{d,d+l}$, then their difference remains nonzero modulo $p$, since $\supp(\mu)\sub\left(-\frac{p}{2},\frac{p}{2}\right)$. After dividing by $x^d$, this difference has degree at most $l$, and therefore has at most $l$ roots in $\FF_p^\times$. Since $\Pr\left(R_{d,d+l}=\widetilde{R}_{d,d+l}\right) = \norm{\mu}_2^{2(l+1)}$, we obtain
\begin{equation}\label{eq:L2-p2}
\sum_{a\in\FF_p^\times}\norm{\rho_a^{(d,d+l)}}_2^2 \le l+p\norm{\mu}_2^{2(l+1)}.
\end{equation}

Set $l_0\coloneqq \ceil{\frac{\log p}{H_2(\mu)}}$. By \eqref{eq:L2-p2} and Plancherel's identity,
\[
\frac{1}{p^3}\sum_{a\in\FF_p^\times}\sum_{\xi\in\mathcal{W}_p} \left|\whrho{_a^{(d,d+l_0-1)}}(\xi)\right|^2 = \sum_{a\in\FF_p^\times}\norm{\rho_a^{(d,d+l_0-1)}}_2^2 \le 2l_0.
\]
Since $\rho_a^{(1,2l_0)} = \rho_a^{(1,l_0)}*\rho_a^{(l_0+1,2l_0)}$, Cauchy--Schwarz gives
\begin{equation}\label{eq:L1-p2}
\frac{1}{p^3}\sum_{a\in\FF_p^\times}\sum_{\xi\in\mathcal{W}_p} \left|\whrho{_a^{(1,2l_0)}}(\xi)\right| \le 2l_0.
\end{equation}
If $a\in\AA_p$, then the multiplicative order of $a$ is at least $n$, and by the assumptions on $p$ and $n$ we have $n \ge \Cord\log p^4\log\log\log p^4$ and that $n-2l_0-2\ge n/2$. We may therefore apply \Cref{cor:fourier-p2} to obtain
\[
\left|\whrho{_a^{(2l_0+1,n-1)}}(\xi)\right| \le \exp\left(-\frac{cn}{\log p^4(\log\log p^4)^5}\right)
\]
for every $a\in A$ and every $0\ne\xi\in\mathcal{W}_p$.

The constant and leading coefficients contribute Fourier factors of absolute value at most $1$. Hence, by \eqref{eq:L1-p2},
\[
\frac{1}{p^3}\sum_{a\in A}\sum_{\xi\in\mathcal{W}_p\setminus\set{0}} \left|\whrho{_a^{(n)}}(\xi)\right| \le 2l_0\exp\left(-\frac{cn}{\log p^4(\log\log p^4)^5}\right) \le \exp\left(-\frac{c\sqrt{n}}{\log^2 n}\right)
\]
where the last inequality follows from the assumed upper bound on $p$, after decreasing $c$ if necessary. The proposition now follows by Fourier inversion on $\mathcal{W}_p$.
\end{proof}

We can now rule out admissible double roots which lift modulo $p^2$.

\begin{prop}\label{prop:no-double-root-lifts}
Let $\PP$ be a finite nonempty set of primes satisfying \ref{item:P1}. Then
\[
\Pr\left(\begin{array}{c} \exists p\in\PP\ \exists a\in\AA_p \text{ such that}\\
a\text{ lifts to a double root of }R\text{ modulo }p^2 \end{array}\right) \le S_2(\PP)(1+o(1)).
\]
\end{prop}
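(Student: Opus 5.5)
By \Lref{lem:double-root-lift}, if an admissible double root $a\in\AA_p$ of $\overline{R}$ lifts to a double root modulo $p^2$, then $R(a)\equiv 0\pmod{p^2}$ for the representative $a\in\set{0,\dots,p-1}$, since every lift of $a$ is then a root modulo $p^2$. We also have $R'(a)\equiv 0\pmod p$, because $a$ is a double root modulo $p$. The event in the statement is therefore contained in the event that there exist $p\in\PP$ and $a\in\AA_p$ with
\[
R(a)\equiv 0\spmod{p^2},\qquad R'(a)\equiv 0\spmod p.
\]
Only this weaker condition is used, so we need no assumption about triple roots.

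Next we bound the probability of this event by a union bound, that is, by the expected number of such pairs $(p,a)$. For each $p\in\PP$ we apply \Pref{prop:mix-p2} with $A=\AA_p$. The hypotheses hold for $n$ large: \ref{item:P1} gives $p\le\exp(\sqrt n/\log^3 n)$. Also $p\ge n^{B_0}>2H_\mu$ for $n\ge n_0(\mu)$, so $\supp(\mu)\sub(-\frac p2,\frac p2)$. If $\AA_p$ is empty there is nothing to prove for that $p$. We obtain
\[
\sum_{a\in\AA_p}\Pr\left(R(a)\equiv0\spmod{p^2},\,R'(a)\equiv 0\spmod p\right)\le \frac{|\AA_p|}{p^3}+\exp\left(-\frac{c\sqrt n}{\log^2 n}\right)\le \frac1{p^2}+\exp\left(-\frac{c\sqrt n}{\log^2 n}\right).
\]

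Summing over $p\in\PP$ gives the bound
\[
S_2(\PP)+|\PP|\exp\left(-\frac{c\sqrt n}{\log^2 n}\right).
\]
By the remark following Assumption~\ref{assum:P-prop}, the second term is $o(S_2(\PP))$, since $|\PP|\le\exp(\sqrt n/\log^3 n)$ and $S_2(\PP)\ge(\max\PP)^{-2}$. Markov's inequality then yields the bound $S_2(\PP)(1+o(1))$.

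There is no real obstacle, because the mixing estimate of \Pref{prop:mix-p2} does all the work. The only point needing care is the reduction step: we must check that \Lref{lem:double-root-lift} lets us replace the existence of some lift $a+pb$ with $R(a+pb)\equiv R'(a+pb)\equiv0\pmod{p^2}$ by conditions at the fixed representative $a$. This holds because $R(a+pb)\equiv R(a)\pmod{p^2}$ when $p\mid R'(a)$.
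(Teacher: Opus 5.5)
Your proposal is correct and follows essentially the same argument as the paper. Both reduce via \Lref{lem:double-root-lift} to the conditions $R(a)\equiv 0\spmod{p^2}$, $R'(a)\equiv 0\spmod{p}$, apply \Pref{prop:mix-p2} with $A=\AA_p$, take a union bound over $p\in\PP$, and absorb $|\PP|\exp(-c\sqrt n/\log^2 n)$ into $o(S_2(\PP))$. Your explicit check that $\supp(\mu)\sub(-\frac p2,\frac p2)$ for $p\ge n^{B_0}$ is a detail the paper leaves implicit.
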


\begin{proof}
Fix $p\in\PP$. By \Lref{lem:double-root-lift}, if $a\in\AA_p$ lifts to a double root modulo $p^2$, then $R(a)\equiv 0\spmod{p^2}$ and $R'(a)\equiv 0\spmod{p}$. Therefore, by \Pref{prop:mix-p2},
\begin{multline*}
  \Pr\left(\exists a\in\AA_p\text{ which lifts to a double root modulo }p^2\right) \\
  \le \sum_{a\in\AA_p}\Pr\left(R(a)=0\spmod{p^2},\,R'(a)=0\spmod p\right) \\
  \le \frac{|\AA_p|}{p^3} + \exp\left(-\frac{c\sqrt{n}}{\log^2 n}\right) \le \frac{1}{p^2} + \exp\left(-\frac{c\sqrt{n}}{\log^2 n}\right).
\end{multline*}
Summing over $p\in\PP$, we obtain
\[
\Pr\left(\begin{array}{c} \exists p\in\PP\ \exists a\in\AA_p \text{ such that}\\ a\text{ lifts to a double root of }R\text{ modulo }p^2 \end{array}\right) \le S_2(\PP) + |\PP|\exp\left(-\frac{c\sqrt{n}}{\log^2 n}\right).
\]
Finally,
\[
S_2(\PP) \ge \frac{|\PP|}{(\max\PP)^2},
\]
and the assumed upper bound on $\max\PP$ shows that the second term is $o(S_2(\PP))$. This completes the proof.
\end{proof}

\section{Non-admissible double roots}\label{sec:nonadmissible}

The previous sections studied the number of admissible double roots of $R$ in small extensions of $\FF_p$. However, we also need to rule out the existence of inadmissible double roots. To do this, we will bound the probability that for a prime $p$ and a positive integer $k\ge 1$, there are $(a,\alpha)\in\FF_p\times\FF_{p^k}$ such that $a$ is an admissible double root of $R$, and $\alpha$ is a double root of $R$ which has multiplicative order less than $n$.

\begin{prop}\label{prop:double-Fp-Fq}
There are absolute constants $C,c>0$ such that the following holds. Let $p$ be a prime number, and let $k\ge 1$ be a positive integer. Assume that $n\ge C\log p^{2k+2}(\log\log p^{2k+2})^4$ and that $\supp(\mu)\sub(-\frac{p}{2},\frac{p}{2})$. Then, for any admissible $a\in\FF_p$, and for any $\alpha\in\FFF_{p^k}$ (which need not be admissible) such that $\alpha\ne a$, we have
\begin{multline*}
  \Pr(R(a)=R'(a)=R(\alpha)=R'(\alpha)=0) \\
  \le \exp\left(-\frac{cn}{\log p^{2k+2}(\log\log p^{2k+2})^5}\right) + \frac{1}{p^2}\Pr(R(\alpha)=0).
\end{multline*}
\end{prop}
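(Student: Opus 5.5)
We work by Fourier inversion on the group $\mathcal{V}^2$, where $\mathcal{V}=\FF_p\oplus\FF_{p^k}$. In the setup of \Sref{sec:konyagin} this means $M=1$, $p_1=p$, $t_1=2$, $q_{1,1}=p$, $q_{1,2}=p^k$, and $r=2$. Then $Q=p$, $D=k+1$ and $Q^{rD}=p^{2k+2}$, which matches the quantities in the statement.

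Set $\alpha^*=(a,\alpha)\in\mathcal{V}$. The event in question is exactly $\nu^{(n)}_{\alpha^*}(0)$, where $\nu^{(n)}_{\alpha^*}$ is the law of $(R(a),R'(a),R(\alpha),R'(\alpha))$. Write $\xi=(\xi_a,\xi_\alpha)$ with $\xi_a\in\FF_p^2$ and $\xi_\alpha\in\FF_{p^k}^2$. Fourier inversion gives
\[
\nu^{(n)}_{\alpha^*}(0)=\frac{1}{p^{2+2k}}\sum_{\xi_\alpha}\whnu{^{(n)}_{\alpha^*}}(0,\xi_\alpha)+\frac{1}{p^{2+2k}}\sum_{\xi_a\ne 0}\sum_{\xi_\alpha}\whnu{^{(n)}_{\alpha^*}}(\xi_a,\xi_\alpha).
\]

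The first sum is handled by inversion again. The character $(0,\xi_\alpha)$ only sees the $\FF_{p^k}^2$-marginal, so the first term equals
\[
\frac{1}{p^2}\Pr(R(\alpha)=R'(\alpha)=0)\le\frac{1}{p^2}\Pr(R(\alpha)=0).
\]
This is the second term of the claimed bound, and it needs no information about the multiplicative order of $\alpha$.

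For the second sum we apply \Cref{cor:iterated-fourier-nu-bound}, with the pair $(j_0,l_0)$ taken to be the $\FF_p$-coordinate.

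1. We check that $\alpha^*$ is generic in the sense of Definition~\ref{def:generic}. Both coordinates are nonzero: $a$ is admissible, and $\alpha\in\FFF_{p^k}$. Here $\alpha\in\FFF_{p^k}$ means $\alpha\ne 0$ when $k=1$, and means $\alpha$ lies in no proper subfield when $k\ge2$. Each coordinate lies in no proper subfield of its field. The two coordinates are not Galois conjugate: if $k=1$ this is the assumption $\alpha\ne a$, and if $k\ge2$ it holds because $\alpha\notin\FF_p$.

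2. We check the order hypothesis. Since $\xi_a\ne0$ and $a$ is admissible, $a$ has multiplicative order at least $n$. After enlarging $C$, this is at least $\Cord\log p^{2k+2}\log\log\log p^{2k+2}$.

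3. We check the support condition. The hypothesis $\supp(\mu)\sub(-\tfrac p2,\tfrac p2)$ is exactly the condition required in the corollary.

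We apply the corollary to the block of coefficients $\omega_1,\dots,\omega_{n-1}$, so $d=1$ and $l=n-2$. The hypothesis $n\ge C\log p^{2k+2}(\log\log p^{2k+2})^4$ guarantees $l\ge\Ckon\log Q^{rD}(\log\log Q^{rD})^4$. The coefficients $\omega_0$ and $x^n$ contribute Fourier factors of modulus at most $1$. Hence every $\xi$ with $\xi_a\ne0$ satisfies
\[
\bigl|\whnu{^{(n)}_{\alpha^*}}(\xi)\bigr|\le\exp\left(-\frac{cn}{\log p^{2k+2}(\log\log p^{2k+2})^5}\right).
\]
The second sum has at most $p^{2+2k}$ terms and carries the prefactor $p^{-(2+2k)}$. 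It is therefore bounded by the same exponential, with no loss.

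The step that needs the most care is the genericity check in point 1, especially the case $k=1$, where the hypothesis $\alpha\ne a$ is used. One must also confirm that \Cref{cor:iterated-fourier-nu-bound} only needs a large multiplicative order in one nonzero coordinate of $\xi$. It does, because Lemma~\ref{lem:apply-pol} is applied only to the coordinates where $\beta$ is nonzero, so a non-admissible $\alpha$ causes no difficulty once $\xi_a\ne0$. Adding the two contributions gives the stated bound.
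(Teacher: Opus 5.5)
Your proposal is correct and follows essentially the same route as the paper. Both arguments apply Fourier inversion on $\FF_p^2\times\FF_{p^k}^2$ and split according to whether the $\FF_p^2$-component of the frequency vanishes; the nonzero part is bounded by \Cref{cor:iterated-fourier-nu-bound} (via the admissible coordinate $a$), and the zero part is handled by inversion on the $\FF_{p^k}^2$-marginal, giving $\frac{1}{p^2}\Pr(R(\alpha)=R'(\alpha)=0)$. Your explicit checks of genericity (including the use of $\alpha\ne a$ when $k=1$), of the order hypothesis, and of the choice $d=1$, $l=n-2$ are steps the paper leaves implicit.
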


\begin{proof}
Let $\nu=\nu_{a,\alpha}^{(n)}$. By Fourier inversion,
\begin{equation}\label{eq:nonad-fourier-inversion}
  \Pr(R(a)=R'(a)=R(\alpha)=R'(\alpha)=0) = \frac{1}{p^{2k+2}}\sum_{(\xi_1,\xi_2,\xi_3,\xi_4)\in\FF_p^2\times\FF_{p^k}^2} \widehat{\nu}(\xi_1,\xi_2,\xi_3,\xi_4).
\end{equation}
We divide this sum into two parts. When $(\xi_1,\xi_2)\ne(0,0)$, we can use the bound of \Cref{cor:iterated-fourier-nu-bound} and deduce
\[
\left|\widehat{\nu}(\xi_1,\xi_2,\xi_3,\xi_4)\right| \le \exp\left(-\frac{cn}{\log p^{2k+2}(\log\log p^{2k+2})^5}\right).
\]
For $(\xi_1,\xi_2)=(0,0)$, we note that
\[
\widehat{\nu}(0,0,\xi_3,\xi_4) = \whnu{_{\alpha}^{(n)}}(\xi_3,\xi_4).
\]
By Fourier inversion,
\[
\frac{1}{p^{2k}}\sum_{(\xi_3,\xi_4)\in\FF_{p^k}^2} \whnu{_{\alpha}^{(n)}}(\xi_3,\xi_4) = \Pr(R(\alpha)=R'(\alpha)=0) \le \Pr(R(\alpha)=0).
\]
Combining the above equalities proves the proposition.
\end{proof}

We therefore need to estimate the probability that $R$ has a root which is inadmissible. We will show that, after excluding some primes, these probabilities are bounded by $n^{-A}$ for $A>0$. We shall use the following finite-group version of the inverse Littlewood--Offord theorem. Unfortunately, the result in \cite[Theorem~1.6]{KoenigNguyenPan24} is stated for lazy Bernoulli variables. However, a standard argument allow to apply it to any fixed non-degenerate distribution on $\Z$. Let us give the defailts.
\begin{defn}\label{def:CCCP} Let $G$ be an Abelian group. A symmetric coset progression is a set of the form $H+Q$ where $H$ is a subgroup of $G$ and $Q=\phi(\Lambda\cap \Z^r)$, where $\Lambda$ is a symmetric box in $\R^r$ and $\phi$ is a group homomorphism from $\Z^r$ to $G$. It is called proper if $\phi$, restricted to $\Lambda\cap \Z^r$, is one-to-one. The number $r$ is called the rank of the symmetric coset progression.

  A centered convex progression is similar, except $\Lambda$ does not need to be a box, but can be any symmetric convex set in $\R^r$.
\end{defn}
\begin{lem}\label{lem:finite-group-ILO}
For every $A>0$ and $0<\eps<1$ there are constants $r,B,m_0=O_{A,\eps,\mu}(1)$ such that the following holds for every $m\ge m_0$. Let $\omega_1,\dots,\omega_m$ be i.i.d.\ $\mu$-random variables. Then for any prime $p$ outside a finite set depending only on~$\mu$, for any $k\ge 1$, and for any $\alpha_1,\dots,\alpha_m\in\FF_{p^k}$, if
\[
\sup_{\beta\in\FF_{p^k}}\bigg|\Pr\bigg(\sum_{j=1}^m\omega_j \alpha_j = \beta\bigg) - \frac{1}{p^k}\bigg| \ge m^{-A},
\]
then there is a proper symmetric coset progression $H+Q\subseteq\FF_{p^k}$ of rank at most $r$ and cardinality at most $m^B$ which contains all but at most $\eps m$ of the $\alpha_j$, counted with multiplicity.
\end{lem}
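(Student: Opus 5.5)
We reduce to \cite[Theorem~1.6]{KoenigNguyenPan24}, which treats sums $\sum_j \eta_j\alpha_j$ with $\eta_j$ i.i.d.\ lazy Bernoulli variables ($\Pr(\eta=\pm1)=\rho/2$, $\Pr(\eta=0)=1-\rho$). Since $\mu$ is non-degenerate, fix two distinct atoms $u\ne v$ of $\mu$ and set $\rho_0=\min\{\mu(u),\mu(v)\}>0$. We realize each $\omega_j$ as a mixture. With probability $2\rho_0$ we set $\omega_j\in\{u,v\}$, each value with probability $\tfrac12$. Otherwise $\omega_j$ is drawn from the normalized residual law $(\mu-\rho_0\delta_u-\rho_0\delta_v)/(1-2\rho_0)$. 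Equivalently, we write $\omega_j=\theta_j+\frac{u-v}{2}\epsilon_j$, where $\epsilon_j\in\{\pm1\}$ is used only on the event $I_j$ that the first branch is chosen. To stay integral we use $\omega_j=v+(u-v)\tau_j$ with $\tau_j\in\{0,1\}$ fair on $I_j$, so that $\tau_j=\frac{1+\epsilon_j}{2}$. Conditioning on the set $J=\{j:I_j\}$ and on all $\omega_j$ with $j\notin J$, the sum becomes a shift plus $\frac{u-v}{2}\sum_{j\in J}\epsilon_j\alpha_j$. We work in characteristic $p\ne 2$ and exclude the finitely many primes dividing $2(u-v)$; this is the finite exceptional set in the statement. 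The factor $\frac{u-v}{2}$ is then a unit, and multiplying by it preserves coset progressions (it is an automorphism). To get genuinely lazy variables we pair up indices, or simply note that a $\pm1$ Rademacher sum is the case $\rho=1$, which we believe \cite{KoenigNguyenPan24} allows. If it does not, we split $I_j$ further with an extra coin to create the lazy zero.

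Next we transfer the concentration. Since point probabilities of the full sum are averages of conditional point probabilities, the hypothesis $\sup_\beta|\Pr(\cdot=\beta)-p^{-k}|\ge m^{-A}$ should survive conditioning, but averaging can hide a deviation of either sign. To handle this we use Fourier analysis. A deviation of $m^{-A}$ forces some nontrivial character $\xi$ with $|\widehat{\nu}(\xi)|\ge m^{-A}$, because $\sup|\nu-u|\le \frac1{p^k}\sum_{\xi\ne0}|\widehat\nu(\xi)|$. This is awkward when $p^k$ is huge, so instead we use $\|\nu-u\|_\infty\le\|\nu-u\|_2$ and get $\sum_{\xi\ne0}|\widehat\nu(\xi)|^2\ge p^km^{-2A}$. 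Now $|\widehat\nu(\xi)|=\prod_j|\widehat{\mu.\delta_{\alpha_j}}(\xi)|$. Comparing factorwise, $|\widehat{\mu.\delta_{\alpha_j}}(\xi)|\le 1-2\rho_0+2\rho_0|\cos(\cdot)|$, which is exactly the Fourier transform of a lazy variable $\eta_j$ with laziness $1-2\rho_0$ in the modulus, up to the unit scaling. Hence the $L^2$-distance from uniform of the lazy sum dominates that of the $\mu$-sum, so the lazy sum $\sum\eta_j\alpha'_j$ (with $\alpha'_j=\frac{u-v}{2}\alpha_j$) has $\|\cdot-u\|_2\ge m^{-A}$. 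Converting back to $\sup$ loses at most a square root on the support: $\|f\|_2^2\le\|f\|_\infty\|f\|_1\le 2\|f\|_\infty$. The lazy sum therefore has $\sup$-deviation $\ge m^{-2A}/2$.

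We then apply \cite[Theorem~1.6]{KoenigNguyenPan24} with exponent $2A+1$ and the given $\eps$. This yields a proper symmetric coset progression of bounded rank and size $m^{B}$ containing all but $\eps m$ of the $\alpha'_j$. Multiplying by $\frac{2}{u-v}$, which is a group automorphism of $\FF_{p^k}$ and preserves properness, rank and cardinality, transfers it to the $\alpha_j$. The main obstacle is the precise form of the cited theorem: whether its hypothesis is a $\sup$-deviation or a plain concentration, and which laziness parameters it allows. The Fourier domination step is the robust core, and we adapt the final bookkeeping to whatever form that theorem takes.
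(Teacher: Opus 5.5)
Your strategy matches the paper's: dominate the $\mu$-walk's Fourier coefficients factorwise by those of a lazy Bernoulli walk, transfer the deviation, and apply \cite[Theorem~1.6]{KoenigNguyenPan24}. However, the domination step, which you call the robust core, is false as stated.

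The bound $|\widehat{\mu}(\theta)|\le 1-2\rho_0+2\rho_0|\cos(\pi(u-v)\theta)|$ is correct. But its right-hand side is not the modulus of the transform of your lazy variable. Take $\eta$ with $\Pr(\eta=0)=1-2\rho_0$, $\Pr(\eta=\pm1)=\rho_0$, attached to $\frac{u-v}{2}\alpha_j$. At a character with $t=\tr(\xi\alpha_j)$, its coefficient is $1-2\rho_0+2\rho_0\cos(2\pi\tilde c t/p)$, where $\tilde c$ is the integer representative of $\frac{u-v}{2}$ modulo $p$. Writing $2\tilde c=(u-v)+sp$, this equals $1-2\rho_0+2\rho_0(-1)^{st}\cos(\pi(u-v)t/p)$. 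You do not control the sign $(-1)^{st}$. When the cosine term is negative, the modulus drops below your bound, and the coefficient can even be negative when $\rho_0>1/4$.

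Concretely, take $\mu$ uniform on $\{0,1,2\}$, $u=0$, $v=1$, and $t$ even with $t/p\approx 0.9$. Then $|\widehat{\mu}(t/p)|\approx 0.87$ while $|\widehat{\eta}|\approx 0.30$. So $|\widehat{\mu}|\le|\widehat{\eta}|$ fails, and the claimed $L^2$ comparison does not follow. Separately, $\rho_0=\frac12$ would give a non-lazy Rademacher walk, which, as you suspect, may lie outside the cited theorem.

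The repair is to make the comparison variable lazier and drop the halving. From $1-|c|\ge\frac12(1-c^2)$ you get $|\widehat{\mu}(\theta)|\le 1-\rho_0\sin^2(\pi(u-v)\theta)$. This is exactly $\E\exp(2\pi i(u-v)\theta\eta)$ for $\Pr(\eta=\pm1)=\rho_0/4$, $\Pr(\eta=0)=1-\rho_0/2$. So attach $\eta_j$ to $(u-v)\alpha_j$; only $p\nmid u-v$ is needed.

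This is essentially the paper's argument. The paper instead uses $|\widehat{\mu}|\le 1-c\sin^2$, after normalising so that the differences of $\supp\mu$ generate $\Z$, together with a fixed laziness $\lambda_\mu\le\frac14$.

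Because the dominating transform is $\ge 1-\rho_0\ge\frac12>0$, your worry about the $L^1$ bound for large $p^k$ disappears. By Fourier inversion, $p^{-k}\sum_{\xi\ne0}\prod_j\widehat{\eta}(\cdot)=\Pr(\sum_j\eta_j\alpha'_j=0)-p^{-k}$ exactly. Hence the deviation $m^{-A}$ transfers with no loss, which is how the paper proceeds.

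Your $L^2$ route with $\norm{f}_2^2\le 2\norm{f}_\infty$ also works once the domination is fixed. It costs $A\mapsto 2A+1$, which is harmless. The conditioning/mixture paragraph can be dropped entirely.
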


\begin{proof}
  We may assume without loss of regularity that $\supp(\mu)$ is not contained in a coset of a proper subgroup of $\Z$ (indeed, the set of excluded primes is exactly the divisors of the maximal $d$ such that $\supp(\mu)\subset a+d\Z$). This gives that $|\widehat{\mu}(\xi)|$ takes the value 1 only at $\xi=0$ and hence $|\widehat{\mu}(\xi)|\le 1-c\sin^2(\xi/2)$ for some $c$ (we normalise the Fourier transform to have $\xi\in[0,1]$).

Fix $0<\lambda_{\mu}\le\frac{1}{4}$ such that $4\pi^2\lambda_{\mu}\le c_{\mu}$, and let $\eta_1,\dots,\eta_m$ be independent lazy Bernoulli random variables with parameter $\lambda_{\mu}$, so that $\Pr(\eta_j=0)=1-\lambda_{\mu}$ and $\Pr(\eta_j=1)=\Pr(\eta_j=-1)=\frac{\lambda_{\mu}}{2}$. Then %
\[
  \widehat{\eta_1}(\xi) = 1-\lambda_{\mu}+\lambda_{\mu}\cos(\xi) = 1-2\lambda_{\mu}\sin^2(\xi/2)
  \ge |\widehat{\mu}(\xi)|.
\] %
Writing $\tr$ for the trace from $\FF_{p^k}$ to $\FF_p$, Fourier inversion gives
\begin{align*}
  \lefteqn{\sup_{\beta\in\FF_{p^k}} \bigg|\Pr\bigg(\sum_{j=1}^m\omega_j\alpha_j=\beta\bigg) - \frac{1}{p^k}\bigg| \le \frac{1}{p^k} \sum_{\xi\in\FF_{p^k}^{\times}} \prod_{j=1}^m\left|\widehat{\mu}\left(\frac{\tr(\xi\alpha_j)}{p}\right)\right|}\qquad & \\
  &\le \frac{1}{p^k} \sum_{\xi\in\FF_{p^k}^{\times}} \prod_{j=1}^m\widehat{\eta_1}\left(\frac{\tr(\xi\alpha_j)}{p}\right)
  = \Pr\bigg(\sum_{j=1}^m\eta_j\alpha_j=0\bigg) - \frac{1}{p^k} \\
  &\le \sup_{\beta\in\FF_{p^k}}\bigg|\Pr\bigg(\sum_{j=1}^m\eta_j\alpha_j=\beta\bigg) - \frac{1}{p^k}\bigg|.
\end{align*}
This shows that
\[
\sup_{\beta\in\FF_{p^k}}\bigg|\Pr\bigg(\sum_{j=1}^m\eta_j\alpha_j=\beta\bigg) - \frac{1}{p^k}\bigg| \ge m^{-A}.
\]

For all sufficiently large $m$, the fixed parameter $\lambda_{\mu}$ satisfies the assumptions of \cite[Theorem~1.6(i)]{KoenigNguyenPan24}, with $\eps_0=\frac{1}{4}$. Taking the number of exceptional elements to be $\floor{\eps m}$, that theorem gives a proper symmetric coset progression of bounded rank containing all but at most $\eps m$ of the~$\alpha_j$. Its cardinality is bounded by $m^B$ for some $B=O_{A,\eps,\mu}(1)$.
\end{proof}

\begin{prop}\label{prop:inadmissible-root}
For any $A,\lambda>0$ there are constants $C,d_0,m_0=O_{A,\lambda,\mu}(1)$ such that the following holds. For every $m\ge m_0$, there are at most $m^C$ primes $p$ for which there exist $k\ge 1$ and $\alpha\in\FF_{p^k}^{\times}$ of multiplicative order $d$ with $d_0\le d\le \lambda m$, such that
\[
\sup_{\beta\in\FF_{p^k}}\left|\Pr(R_{1,m}(\alpha)=\beta)-\frac{1}{p^k}\right| \ge \frac{1}{m^A}.
\]
\end{prop}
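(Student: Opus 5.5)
The plan is to apply \Lref{lem:finite-group-ILO} to the coefficient vector $\alpha_j=\alpha^j$ ($1\le j\le m$), so that $R_{1,m}(\alpha)=\sum_{j=1}^m\omega_j\alpha^j$. The goal is to convert the resulting additive structure into a bounded-size integer polynomial relation satisfied by a root of unity of order $d$ modulo $p$. Such a relation forces $p$ to divide one of polynomially many nonzero integers, each of size $\exp(O(m\log m))$.

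\emph{Step 1 (additive structure).} Fix $A,\lambda$ and a small $\eps>0$, to be chosen in terms of $\lambda$ (and possibly of $r$; see below). Discard the finitely many exceptional primes of \Lref{lem:finite-group-ILO}. If the non-equidistribution hypothesis holds, we obtain a proper symmetric coset progression $H+Q\subseteq\FF_{p^k}$ of rank at most $r$ and size at most $m^B$ that contains all but $\eps m$ of the $\alpha^j$, counted with multiplicity. Since $\alpha$ has order $d\le\lambda m$, each element of the cyclic group $\langle\alpha\rangle$ occurs at least $\floor{m/d}\ge \frac{m}{2d}$ times among $\alpha^1,\dots,\alpha^m$ (for $m$ large, since $d\le\lambda m$; this may require taking $\lambda$ small or adjusting constants). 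Hence $H+Q$ misses at most $2\eps d$ elements of $\langle\alpha\rangle$.

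\emph{Step 2 (case $H\ne 0$).} If $H\ne\{0\}$, then $H$ is a nonzero additive subgroup of $\FF_{p^k}$, so $p\le|H|\le m^B$. These primes contribute at most $m^B$ to the count.

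\emph{Step 3 (case $H=0$).} Write $Q=\phi(\Lambda\cap\Z^r)$ with generators $g_1,\dots,g_r$. Each $\alpha^i\in Q$ is then an integer combination $\sum_s n_{i,s}g_s$ with $|n_{i,s}|\le m^B$. Choose $r+1$ exponents $i_0<\dots<i_r$ with $\alpha^{i_s}\in Q$. The corresponding $r+1$ vectors in $\Z^r$ are linearly dependent, and Cramer's rule gives integer coefficients $c_s$, not all zero, with $|c_s|\le (rm^B)^r$ and $\sum_s c_s\alpha^{i_s}=0$ in $\FF_{p^k}$. Let $P(x)=\sum_s c_sx^{i_s-i_0}$, and let $\zeta_d\in\C$ be a primitive $d$-th root of unity. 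If $P(\zeta_d)\ne 0$, then $p$ divides the nonzero integer $N_P=\Res(P,\Phi_d)$, whose absolute value is at most $\big((r+1)(rm^B)^r\big)^{\varphi(d)}=\exp(O(m\log m))$. So $p$ is one of $O(m\log m)$ prime divisors of $N_P$. The number of possible data $(d,i_0,\dots,i_r,c_0,\dots,c_r)$ is at most $\lambda m\cdot m^{r+1}\cdot m^{O(Br)}=m^{O(1)}$. Altogether at most $m^{C}$ primes arise.

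\emph{Main obstacle.} The hard step is guaranteeing that the exponents $i_s$ can be chosen so that $P(\zeta_d)\ne0$ over $\C$. My first attempt is to take $r+1$ consecutive exponents, so that $\deg P\le r<\varphi(d)$ once $d\ge d_0$ with $\varphi(d_0)>r$; then $P(\zeta_d)\neq 0$ because $\Phi_d\nmid P$. This needs a run of $r+1$ consecutive powers of $\alpha$ all lying in $Q$. Such a run exists as soon as $2\eps d(r+1)<d$. The difficulty is the apparent circularity: $r$ depends on $\eps$ in \Lref{lem:finite-group-ILO}.

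If this cannot be resolved directly, I would use instead that $Q$ is a rank-$r$ progression closed (up to bounded dilation) under bounded sums. By pigeonhole, among the $\ge(1-2\eps)d$ powers in $Q$ there are many short additive relations. I would then invoke the classification of vanishing sums of roots of unity with few terms (Lam--Leung / Conway--Jones type results): a minimal vanishing sum with at most $r+1$ terms involves only roots of unity of order divisible solely by primes $\le r+1$, with total order bounded in terms of $r$. Choosing $d_0$ large in terms of $r$, I would pick the $r+1$ exponents so that all pairwise exponent differences avoid these bounded-order configurations; this would force $P(\zeta_d)\neq0$ and complete the count.
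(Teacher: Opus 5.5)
Your fallback is the paper's argument, and it is sound. The paper fixes $\eps=\frac12$ and discards $p\le m^B$ so that $H=\{0\}$. It then notes that $Q$ contains at least $\frac14\min\{d,m\}$ distinct powers of $\alpha$. The $\min$ matters: your Step 1 claim that every element of $\langle\alpha\rangle$ occurs at least $\floor{m/d}\ge\frac{m}{2d}$ times is false when $d>m$, and the statement allows that for $\lambda>1$. Next the paper chooses $r+1$ of these powers from distinct classes of the relation $\alpha^{t}\sim\alpha^{t'}\iff\alpha^{M(t-t')}=1$, where $M=\prod_{\ell\le r+1}\ell$. These classes have size at most $\gcd(M,d)\le M$, so $d_0\ge 4M(r+1)$ suffices; this is the counting your phrase ``pick the $r+1$ exponents so that all pairwise differences avoid these configurations'' leaves implicit. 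Finally, Mann's theorem applied to a minimal vanishing subsum of $F(\zeta_d)$ gives $d\mid M(t_j-t_{j'})$, contradicting the choice. Your remark that ``by pigeonhole \dots\ there are many short additive relations'' is not needed: the relation comes from the linear dependence in your Step 3.

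The circularity in your first attempt can be removed, and doing so gives a more elementary proof than the paper's. You do not need $r+1$ consecutive good exponents. It is enough to have $r+1$ indices $j\in\{1,\dots,m\}$ with $\alpha^j\in Q$ inside a single block of consecutive integers of length $W=2r+2$.

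\begin{itemize}
\item Fix $\eps=\frac14$ once and for all. If every block of length $W$ contained at most $r$ good indices, there would be at most $r(m/W+1)<\frac m2+r\le\frac34 m$ of them once $m\ge 4r$. That contradicts \Lref{lem:finite-group-ILO}, so some block contains $r+1$ good indices.
\item For such indices $j_0<\dots<j_r$, the polynomial $P(x)=\sum_s c_s x^{j_s-j_0}$ is a nonzero integer polynomial of degree less than $W$.
\item Hence $\Phi_d\nmid P$, and so $\Res(\Phi_d,P)\neq 0$, as soon as $\varphi(d)\ge W$. This holds for every $d\ge d_0:=2W^2$, because $\varphi(d)\ge\sqrt{d/2}$.
\end{itemize}

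Only $d_0$ depends on $r$; $\eps$ does not. This route avoids Mann and Conway--Jones entirely, and it works directly with indices, so the $d$ versus $m$ issue never arises. The final count of primes is unchanged, and it is even slightly easier because the exponent differences lie in $[0,W)$.
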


We emphasize that the exceptional set is deterministic, i.e., depends only on $m,A,\lambda$ and $\mu$, but not on $R$.

\begin{proof}
We begin by applying the inverse Littlewood--Offord \Lref{lem:finite-group-ILO} with $\eps=\frac{1}{2}$ to the elements $\alpha,\dots,\alpha^m$. After discarding a finite set of primes depending only on $\mu$, there is a proper symmetric coset progression $H+Q\sub\FF_{p^k}$ of rank at most $r=O_{A,\mu}(1)$ and cardinality at most $m^B$, where $B=O_{A,\mu}(1)$, such that $\alpha^j\in H+Q$ for at least $\frac{1}{2}m$ values of $1\le j\le m$.

We discard all primes which are at most $m^B$. For any remaining prime we have $|H|\le |H+Q|\le m^B<p$. Since every nontrivial additive subgroup of $\FF_{p^k}$ has cardinality at least $p$, it follows that $H=\set{0}$. Therefore $\alpha^j\in Q$ for at least $\frac{1}{2}m$ values of $1\le j\le m$. Each power of $\alpha$ can appear at most $\ceil{m/d}$ times among $\alpha,\dots,\alpha^m$. In particular, $Q$ contains at least $\frac{m}{2\ceil{m/d}} \ge \frac{1}{4}\min\set{d,m}$ distinct powers of $\alpha$.

Let $M=\prod_{\ell\le r+1}\ell=O_{A,\mu}(1)$ be the product of all primes $\ell\le r+1$. We claim that we can find $r+1$ distinct powers $\alpha^{t_1},\dots,\alpha^{t_{r+1}}\in Q$ such that $\alpha^{M(t_j-t_{j'})}\ne 1$ for every $j\ne j'$. Indeed, define an equivalence relation on the distinct powers of~$\alpha$ in $Q$ by $\alpha^{t_j}\sim\alpha^{t_{j'}}$ if $\alpha^{M(t_{j'}-t_j)}=1$. Every equivalence class has cardinality at most $\gcd(M,d)\le M$. Hence there are at least $\frac{1}{4M}\min\set{d,m}$ equivalence classes. We may therefore choose $d_0\ge 4M(r+1)$ so that, for all sufficiently large $m$, there are at least $r+1$ such classes. Choosing one power from each of $r+1$ distinct classes proves the claim. We take $0\le t_j<d$.

Write $Q=\set{m_1g_1+\cdots+m_sg_s\suchthat |m_i|\le N_i}$, where $s\le r$. Since $Q$ is proper, $\prod_{i=1}^s(2N_i+1)=|Q|\le m^B$, and hence $N_i\le m^B$ for every $i$. Write $\alpha^{t_j}=m_{j,1}g_1+\cdots+m_{j,s}g_s$. The corresponding $r+1$ vectors in $\Z^s$ are linearly dependent, so there are primitive integers $c_1,\dots,c_{r+1}$, not all zero, such that $\sum_{j=1}^{r+1}c_j\alpha^{t_j}=0$ in $\FF_{p^k}$. By taking a minimal linear dependence and expressing its coefficients as minors of the coordinate matrix, we have $|c_j|\le s!m^{Bs}$, so we may moreover assume that $|c_j|\le m^{B'}$ for some $B'=O_{A,\mu}(1)$. After discarding the terms for which $c_j=0$, define $F(X)=\sum_jc_jX^{t_j}$. Since $\alpha$ has multiplicative order $d$ and $p\nmid d$, we have $\Phi_d(\alpha)=0$. Thus $F$ and $\Phi_d$ have a common root over $\FF_{p^k}$, and hence $p\mid\Res(\Phi_d,F)$.

We claim that $\Res(\Phi_d,F)\ne 0$. Otherwise, $F(\zeta)=0$ for some primitive $d$-th root of unity $\zeta\in\C$. Take a minimal vanishing subsum of $F(\zeta)=\sum_jc_j\zeta^{t_j}=0$. This subsum has at most $r+1$ terms. By a result of Mann \cite{Mann65} (see also the improvement of Conway--Jones \cite{ConwayJones76}), the order of the quotient of any two roots occurring in this subsum divides $M$. In other words, for some $j\ne j'$, $d\, |\, M(t_j-t_{j'})$. Since %
$\alpha$ has order $d$, this implies $\alpha^{M(t_j-t_{j'})}=1$, contrary to the choice of $t_1,\dots,t_{r+1}$. Therefore $\Res(\Phi_d,F)\ne 0$.

It remains to count the possible prime divisors of these resultants. Since $\Phi_d$ is monic,
\[
\left|\Res(\Phi_d,F)\right| = \prod_{\substack{\zeta\in\C\\ \ord(\zeta)=d}}|F(\zeta)| \le \bigg(\sum_j|c_j|\bigg)^{\varphi(d)} \le \left((r+1)m^{B'}\right)^{\varphi(d)}.
\]
Since $d\le \lambda m$, it follows that $\left|\Res(\Phi_d,F)\right|\le m^{Cm}$ for some $C=O_{A,\lambda,\mu}(1)$. Thus each nonzero resultant has at most $Cm\log m$ distinct prime divisors.

Finally, there are only polynomially many possibilities for $d$, for the exponents $t_1,\dots,t_{r+1}$, and for the coefficients $c_1,\dots,c_{r+1}$: their number is at most $m^{O_{A,\mu}(1)}$. Summing the number of prime divisors over all these possibilities, and adding the primes $p\le m^B$ and the finite set of primes excluded at the beginning, gives at most~$m^C$ possible primes, after increasing $C$ and $m_0$ if necessary.
\end{proof}

We now fix the constant $B_0$ of Assumption~\ref{assum:P-prop}.

\begin{rem}\label{rem:B0}
  From now on, let $d_0=O_{\mu}(1)$ be the constant of \Pref{prop:inadmissible-root} with $A=3$ and $\lambda=1$. %
  Fix $B_0\ge\max\set{3,2d_0+1}$. This is the constant we use for Assumption~\ref{assum:P-prop}.
\end{rem}

The above proposition does not handle elements with multiplicative order smaller than $d_0$. To cover them as well, we use the following deterministic lemma:

\begin{lem}\label{lem:too-small-order}
Let $P\in\Z[x]$ be a polynomial of degree $n$ and height at most $H$. Fix $d_0\ge 1$, and let $p$ be a prime with $p>(Hn(n+1))^{d_0}$. If $P$ has a double root in $\overline{\FF_p}$ of multiplicative order $d<d_0$, then $\Phi_d^2\mid P$ in $\Z[x]$. In particular, $\disc(P)=0$.
\end{lem}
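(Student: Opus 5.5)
The plan is to show that the double root must come from a genuine double cyclotomic factor in characteristic zero, using a resultant argument.

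Let $\alpha\in\overline{\FF_p}$ be a double root of $\overline{P}$ of multiplicative order $d<d_0$. Since $p>(Hn(n+1))^{d_0}>d$, we have $p\nmid d$. Hence $\Phi_d$ is separable mod $p$ and $\overline{\Phi_d}(\alpha)=0$.

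Consider the polynomials $P$ and $P'$ over $\Z$, and let $G=\gcd(P,\Phi_d)$ in $\Q[x]$. Since $\Phi_d$ is irreducible over $\Q$, either $\Phi_d\mid P$ or $\Res(\Phi_d,P)\neq 0$.

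In the latter case, $\alpha$ is a common root of $\overline{P}$ and $\overline{\Phi_d}$, so $p\mid\Res(\Phi_d,P)$. This resultant is bounded as in the proof of \Pref{prop:inadmissible-root}:
\[
|\Res(\Phi_d,P)|=\prod_{\ord(\zeta)=d}|P(\zeta)|\le ((n+1)H)^{\varphi(d)}<(Hn(n+1))^{d_0}<p.
\]
A nonzero integer smaller than $p$ cannot be divisible by $p$, so this is a contradiction. Hence $\Phi_d\mid P$ in $\Z[x]$ (by Gauss's lemma, since $\Phi_d$ is monic).

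Now write $P=\Phi_d P_1$ with $P_1\in\Z[x]$. Mod $p$, $\alpha$ is a simple root of $\overline{\Phi_d}$ (separability) and a double root of $\overline{P}$. Therefore $\overline{P_1}(\alpha)=0$.

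We apply the same resultant argument to $P_1$, which requires a height bound for $P_1$. Two routes are available.

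The first is to avoid heights altogether by using $P'$ instead. We have $\overline{P'}(\alpha)=0$, and $P'$ has degree $n-1$ and height at most $nH$. So either $\Phi_d\mid P'$, or $p\mid \Res(\Phi_d,P')$ with
\[
|\Res(\Phi_d,P')|\le (n\cdot nH)^{\varphi(d)}\le (Hn(n+1))^{d_0}<p,
\]
a contradiction. Thus $\Phi_d\mid P'$ in $\Q[x]$. Since also $\Phi_d\mid P$, every primitive $d$-th root of unity $\zeta$ is a root of both $P$ and $P'$, i.e.\ a double root of $P$ over $\C$. So $\Phi_d^2\mid P$ in $\Q[x]$, and by Gauss's lemma in $\Z[x]$ as well. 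Consequently $P$ has a repeated root and $\disc(P)=0$.

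The second route, a Mahler-measure height bound for $P_1$, is therefore unnecessary.

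The only delicate point is checking that the constant in the hypothesis dominates both resultant bounds:
\[
\norm{P'}_2^{\varphi(d)}\le (\sqrt{n}\,nH)^{d}\le (Hn(n+1))^{d_0}.
\]
This holds since $d<d_0$. The bound for $P$ is similar. The case $p\mid d$ is also excluded, because $p>d_0>d$.
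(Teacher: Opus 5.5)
Your proof is correct and follows the paper's argument. Both of you get $p\mid\Res(\Phi_d,P)$ and $p\mid\Res(\Phi_d,P')$, bound these resultants trivially by products of $|P(\zeta)|$ and $|P'(\zeta)|$ over primitive $d$-th roots of unity $\zeta$ to force them to vanish, and conclude $\Phi_d^2\mid P$. The paper does the last step via $P'=\Phi_d'Q+\Phi_dQ'$ and $\gcd(\Phi_d,\Phi_d')=1$; you do it by noting that each primitive $d$-th root of unity is a double root over $\C$, which is equivalent.

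One small slip is in your closing remark: $\norm{P'}_2^{\varphi(d)}$ is not a valid bound for $|\Res(\Phi_d,P')|=\prod_{\zeta}|P'(\zeta)|$ in general. For example, $F=\sum_{j=0}^{N}x^{dj}$ has $F(\zeta)=N+1=\norm{F}_2^2$ at every primitive $d$-th root of unity $\zeta$. This does not affect the proof, because the $\ell^1$ bound $(n\cdot nH)^{\varphi(d)}$ you use in the main argument is correct and already lies below $(Hn(n+1))^{d_0}<p$.
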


\begin{proof}
By the assumption, $p\mid\Res(\Phi_d,P)$. On the other hand,
\[
\left|\Res(\Phi_d,P)\right| = \prod_{\substack{\zeta\in\C \\ \ord(\zeta)=d}}\left|P(\zeta)\right| \le (H(n+1))^{\varphi(d)} < p
\]
by the assumption. Therefore $\Res(\Phi_d,P)=0$, so $\Phi_d\mid P$ in $\Z[x]$. Write $P=\Phi_dQ$ for some $Q\in\Z[x]$. Similarly, $p\mid\Res(\Phi_d,P')$ and
\[
\left|\Res(\Phi_d,P')\right| = \prod_{\substack{\zeta\in\C \\ \ord(\zeta)=d}}\left|P'(\zeta)\right| \le (Hn(n+1))^{\varphi(d)} < p,
\]
so $\Phi_d\mid P'=\Phi_d'Q+\Phi_dQ'$. Since $\gcd(\Phi_d,\Phi_d')=1$, we must have $\Phi_d\mid Q$, so we conclude $\Phi_d^2\mid P$.
\end{proof}

For a prime $p$ and a positive integer $k\ge 1$, we write $B_{p^k}^{\nad}$ for the event that~$R$ has a double root $\alpha\in\BB_{p^k}$ (recall that this means that $\ord(\alpha)<n$). We can now conclude:

\begin{cor}\label{cor:no-inadmissible-double-roots}
There is a constant $C>0$, depending only on $\mu$, such that, for all sufficiently large $n$, there is a set $\PP_0=\PP_0(n,\mu)$ of at most $Cn^C$ primes with the following property. Let~ $\PP$ be a finite set of primes disjoint from $\PP_0$ satisfying \ref{item:P1}. Also, let $\ksm\ge 2$ (sm standing for small) be a positive integer such that
\[
p^{\ksm} \le \exp\left(\frac{\sqrt{n}}{\log^3 n}\right)
\]
for every $p\in\PP$. Then
\[
\Pr\bigg(\set{\disc(R)\ne 0}\cap \bigcup_{p\in\PP}\bigg(B_p^{\ad}\cap \bigcup_{k=1}^{\ksm} B_{p^k}^{\nad}\bigg)\bigg) \le \frac{3(1+o(1))}{n}S(\PP).
\]
\end{cor}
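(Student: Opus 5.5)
Since the target is roughly $\frac{3}{n}S(\PP)$, I will use a union bound over $p\in\PP$, $1\le k\le\ksm$ and double roots $\alpha\in\BB_{p^k}$. For each $p$ the aim is a contribution of order $\frac{1}{np}$; combined with \Lref{lem:count-inadmissible} (which bounds the total number of such $\alpha$ by $n^2$), this means that for each pair $(a,\alpha)$ with $a\in\AA_p$ I need a bound of order $\frac{1}{p^2 n^3}$. I split the inadmissible $\alpha$ by multiplicative order $d$ into three ranges: $d<d_0$, $d_0\le d\le n/2$ (or a suitable fraction of $n$), and $n/2<d<n$.

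For $d<d_0$ I use \Lref{lem:too-small-order}. Every $p\in\PP$ satisfies $p\ge n^{B_0}$ with $B_0\ge 2d_0+1$. Hence for large $n$ we have $p>(Hn(n+1))^{d_0}$, taking $H=\max(H_\mu,H_{\omega_0})$, which is constant because the coefficients are bounded; if needed I absorb $H$ by enlarging $B_0$ or $n_0$. A double root of order $<d_0$ then forces $\disc(R)=0$, so these $\alpha$ contribute nothing on the event $\disc(R)\ne0$.

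For the remaining $\alpha$ I apply \Pref{prop:double-Fp-Fq} to each pair $(a,\alpha)$ with $a\in\AA_p$ and $\alpha\in\BB_{p^k}$, $\alpha\ne a$. The hypothesis $n\ge C\log p^{2k+2}(\cdots)$ holds because $p^{k}\le\exp(\sqrt n/\log^3 n)$. The error term $\exp(-c\sqrt n/\log^2 n)$, multiplied by the at most $p\cdot n^2$ pairs and summed over $p$, is $o(S_2(\PP))$ by the remark following Assumption~\ref{assum:P-prop}. The main term is $\frac{1}{p^2}\Pr(R(\alpha)=0)$, so what remains is to show $\Pr(R(\alpha)=0)\le \frac{C}{n^{3}}$ up to lower-order terms.

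For this I write $R(\alpha)=R_{1,m}(\alpha)+(\text{independent rest})$. Conditioning on the rest gives $\Pr(R(\alpha)=0)\le\sup_\beta\Pr(R_{1,m}(\alpha)=\beta)$. I then invoke \Pref{prop:inadmissible-root} with $A=3$, $\lambda=1$ and $m=n-1$. For $d_0\le d\le m$, and for every $p$ outside a deterministic exceptional set of at most $m^{C}$ primes, this gives $\sup_\beta\Pr(R_{1,m}(\alpha)=\beta)\le \frac1{p^k}+\frac1{m^3}$. I put these exceptional primes into $\PP_0$; the case $d=n-1$ can be folded into the same range by taking $\lambda$ slightly larger, and enlarging $\PP_0$ if necessary. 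Summing over $a\in\AA_p$ (at most $p$ choices) and the at most $n^2$ choices of $\alpha$ gives
\[
\sum_{a,\alpha}\frac{1}{p^2}\Big(\frac{1}{p^k}+\frac{1}{(n-1)^3}\Big)\le \frac{n^2}{p}\Big(\frac{1}{p}+\frac{1+o(1)}{n^3}\Big).
\]
This is $\frac{1+o(1)}{np}+\frac{n^2}{p^2}$. Since $p\ge n^{B_0}$ with $B_0\ge3$, the second term is at most $\frac{1}{np}$. After absorbing the error terms, the total is at most $\frac{3(1+o(1))}{np}$, and summing over $p\in\PP$ gives the claim.

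The main obstacle is the bookkeeping around \Pref{prop:inadmissible-root}. First, its exceptional primes depend only on $(m,\mu)$, hence only on $n$, which is what allows them to be placed in a deterministic set $\PP_0$. Second, the order range $d<n$ has to be matched with $d\le\lambda m$. Third, the pair $\alpha=a$ is excluded in \Pref{prop:double-Fp-Fq} but is automatically impossible here, because $a$ is admissible and $\alpha$ is not.
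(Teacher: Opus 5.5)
Your proposal is correct and follows the paper's own proof essentially step for step: you discard orders $d<d_0$ on $\{\disc(R)\ne 0\}$ via \Lref{lem:too-small-order}, bound each pair $(a,\alpha)$ by \Pref{prop:double-Fp-Fq}, control $\Pr(R(\alpha)=0)$ by \Pref{prop:inadmissible-root} with $A=3$, $\lambda=1$, $m=n-1$ (with the exceptional primes placed in $\PP_0$), and count the $\alpha$'s with \Lref{lem:count-inadmissible}. The separate range $n/2<d<n$ and the suggested enlargement of $\lambda$ are superfluous, since $d\le n-1=\lambda m$ already covers every inadmissible order; changing $\lambda$ would also formally change the $d_0$ on which $B_0$ was fixed in \Rref{rem:B0}.
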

(Recall the definition of $B_p^{\ad}$, Definition \ref{def:Bad}).
\begin{proof}
Let $C,m_0,d_0$ be the constants of \Pref{prop:inadmissible-root} for $A=3$ and $\lambda=1$, and let $\PP_0$ be the set of exceptional primes (from the same proposition). Then $\PP_0$ has size at most $Cn^C$. Since we take the intersection with $\set{\disc(R)\ne 0}$, we may discard all elements of multiplicative order smaller than $d_0$ by \Lref{lem:too-small-order} (here we use $B_0\ge 2d_0+1$, so all primes in $\PP$, for sufficiently large $n$, are larger than $(Hn(n+1))^{d_0}$).

Let $\PP$ be a set of primes such that $\PP\cap\PP_0=\varnothing$. Then, for any $p\in\PP$, we have $\frac{1}{p^k}\le\frac{1}{p}\le\frac{1}{n^3}$, so \Pref{prop:inadmissible-root} shows that for any $k\ge 1$ and any $\alpha\in\FF_{p^k}$ of multiplicative order $d$ satisfying $d_0\le d < n$,
\[
\sup_{\beta\in\FF_{p^k}}\Pr(R_{1,n-1}(\alpha) = \beta) \le \frac{1}{p^k} + \frac{1}{(n-1)^3} \le \frac{3}{n^3}.
\]
It follows that
\[
\Pr(R(\alpha)=0) = \sum_{a_0\in\Z}\Pr(R_{1,n-1}(\alpha)=-a_0-\alpha^n)\Pr(\omega_0=a_0) \le \frac{3}{n^3},
\]
so by \Pref{prop:double-Fp-Fq}, we have
\begin{multline*}
  \Pr(R(a)=R'(a)=R(\alpha)=R'(\alpha)=0) \\
  \le \exp\left(-\frac{cn}{\log p^{2k+2}(\log\log p^{2k+2})^5}\right) + \frac{3}{p^2n^3}\le \frac{3+o(1)}{p^2n^3}
\end{multline*}
where the last inequality follows from our assumption that $p^k\le\exp(\sqrt{n}/\log^3n)$ and from the fact that $n$ is sufficiently large.

Finally, using \Lref{lem:count-inadmissible}, we have
\begin{align*}
  & \Pr\bigg(\set{\disc(R)\ne 0}\cap \bigcup_{p\in\PP}\bigg(B_p^{\ad}\cap \bigcup_{k=1}^{\ksm} B_{p^k}^{\nad}\bigg)\bigg) \\
  &\qquad\le \sum_{p\in\PP}\sum_{k=1}^{\ksm}\sum_{a\textrm{ admissible}}\sum_{\substack{\alpha\in\BB_{p^k} \\ \ord(\alpha)\ge d_0}} \Pr(R(a)=R'(a)=R(\alpha)=R'(\alpha)=0) \\
  &\qquad\le \sum_{p\in\PP}\frac{3+o(1)}{pn} = \frac{3+o(1)}{n}S(\PP).
\end{align*}
This completes the proof.
\end{proof}

\section{Double roots in large field extensions}\label{sec:BSG}

The last ingredient that we need is to bound the probability that $R$ has a double root in larger field extensions than the previous methods provide. We prove this using tools from additive number theory. %

Recall that $\nu_{a,\alpha}$ denotes the law of the random quadrupule $(R(a),R'(a),R(\alpha),R'(\alpha))$, where $a \in \FF_p$ and $\alpha \in \FF_{p^k}$.

We begin with the following bound on the weight $\nu_{a,\alpha}$ assigns to affine subspaces:

\begin{lem}\label{lem:affine-subspace}
Let $p$ be a prime, let $2\le k\le \frac{m-2}{2}$ be a positive integer, let $a\in\FF_p^{\times}$, and let~$\alpha\in\FFF_{p^k}$. Assume that $\supp(\mu)\sub(-p/2,p/2)$. Then, for every affine $\FF_p$-subspace $U\sub\FF_p^2\times\FF_{p^k}^2$ of dimension $d\le 2k+2$,
\[
\nu_{a,\alpha}^{(1,m)}(U) \le \norm{\mu}_{\infty}^{2k+2-d}.
\]
\end{lem}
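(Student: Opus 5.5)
The plan is to expose $2k+2-d$ coefficients one at a time, each of which must hit a single prescribed value. Write $W=\FF_p^2\times\FF_{p^k}^2$, a $\FF_p$-vector space of dimension $2k+2$. The random vector is $X=\sum_{j=1}^m\omega_j\mathbf{w}_j$, where
\[
\mathbf{w}_j=(a^j,ja^{j-1},\alpha^j,j\alpha^{j-1})\in W .
\]
Let $U=u_0+U_0$ with $U_0$ a linear subspace of codimension $s=2k+2-d$, and pick linear functionals $\lambda_1,\dots,\lambda_s\colon W\to\FF_p$ whose common kernel is $U_0$. Then $X\in U$ if and only if $\lambda_i(X)=\lambda_i(u_0)$ for every $i$.

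\textbf{Step 1: choose pivot indices.} I will find indices $j_1<\dots<j_s$ in $\{1,\dots,m\}$ such that the $s\times s$ matrix $(\lambda_i(\mathbf{w}_{j_t}))_{i,t}$ is invertible. For this it suffices to show that $\mathbf{w}_1,\dots,\mathbf{w}_m$ span $W$ over $\FF_p$. Indeed, the images of the $\mathbf{w}_j$ under $\lambda=(\lambda_1,\dots,\lambda_s)$ would then span $\FF_p^s$, and we could select $s$ of them forming a basis.

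\textbf{Step 2: condition on the other coefficients.} Condition on all $\omega_j$ with $j\notin\{j_1,\dots,j_s\}$. The constraint $X\in U$ becomes
\[
\Lambda\,(\omega_{j_1},\dots,\omega_{j_s})^T \equiv b \pmod p
\]
for an invertible matrix $\Lambda$ and a vector $b$ depending only on the conditioned variables. This determines $(\omega_{j_1},\dots,\omega_{j_s})$ modulo $p$. Since $\supp(\mu)\sub(-p/2,p/2)$, reduction mod $p$ is injective on $\supp\mu$, so each $\omega_{j_t}$ is determined as an integer. By independence, the conditional probability is at most $\norm{\mu}_\infty^s$, and averaging over the conditioning gives the claimed bound $\norm{\mu}_\infty^{2k+2-d}$.

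\textbf{Step 3: spanning.} Suppose $\phi\colon W\to\FF_p$ is a linear functional vanishing on $\mathbf{w}_1,\dots,\mathbf{w}_m$. As in \Lref{lem:apply-pol}, write $\phi$ via traces:
\[
\phi(x_1,x_2,x_3,x_4)=\eta_1x_1+\eta_2x_2+\tr(\eta_3x_3+\eta_4x_4).
\]
Expanding the traces, $\phi(\mathbf{w}_j)=0$ says that the sequence
\[
\eta_1a^j+\eta_2ja^{j-1}+\sum_{i=0}^{k-1}\big(\eta_3^{p^i}\alpha^{jp^i}+\eta_4^{p^i}j\alpha^{(j-1)p^i}\big)
\]
vanishes for $j=1,\dots,m$. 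The exponential bases involved are $a,\alpha,\alpha^p,\dots,\alpha^{p^{k-1}}$. These are $k+1$ distinct nonzero elements: $\alpha\in\FFF_{p^k}$ has $k$ distinct conjugates, none of which lies in $\FF_p$, so none equals $a$. Each base carries multiplicity two (the terms in $j$ and $j\alpha^{j-1}$), giving $2k+2$ unknowns.

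The coefficient matrix for $j=1,\dots,2k+2$ is a confluent Vandermonde matrix of order $2$, and it is nonsingular because $p>2$ and the bases are distinct and nonzero. This is exactly the computation in the proof of \Lref{lem:apply-pol}. The hypothesis $k\le (m-2)/2$ gives $m\ge 2k+2$, so these indices are available. Hence all $\eta$'s vanish, so $\phi=0$ and the $\mathbf{w}_j$ span $W$.

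\textbf{Main obstacle.} The delicate point is Step 3, specifically the confluent Vandermonde nondegeneracy. It needs the derivative weights $j$ to be invertible in the relevant sense, which uses $p>2$ (reasonable to assume here). It also needs the shift by one in the index range, handled by factoring out $a\neq 0$ and $\alpha\neq 0$. I would reuse the argument of \Lref{lem:apply-pol} essentially verbatim for this step.
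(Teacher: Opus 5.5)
Your proof is correct, and its overall architecture is the paper's. Both show that $\mathbf{w}_1,\dots,\mathbf{w}_{2k+2}$ is an $\FF_p$-basis of $\FF_p^2\times\FF_{p^k}^2$, condition on the remaining coefficients, and then use $\supp(\mu)\sub(-p/2,p/2)$ to bound the chance that the i.i.d.\ vector of free coefficients lands in an affine subspace by $\norm{\mu}_\infty^{\text{codimension}}$.

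The real difference is how the basis property is proved. The paper argues in primal form. If $P(x)=\sum_{j=1}^{2k+2}c_jx^j\in\FF_p[x]$ satisfies $P(a)=P'(a)=P(\alpha)=P'(\alpha)=0$, then $(x-a)^2f_\alpha^2\mid P$, where $f_\alpha$ is the (separable) minimal polynomial of $\alpha$; it is coprime to $x-a$ because $\alpha\notin\FF_p$. This product has degree $2k+2$ and nonzero constant term, while $P(0)=0$, so $P=0$. You argue in dual form: you express an annihilating functional through the trace pairing and invoke nonsingularity of the order-$2$ confluent Vandermonde matrix on the nodes $a,\alpha,\alpha^p,\dots,\alpha^{p^{k-1}}$, as in \Lref{lem:apply-pol}. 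The paper's route is shorter and needs no determinant evaluation; yours recycles machinery already in the paper.

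Your pivot selection (choosing $s=2k+2-d$ coefficients forced by the $s$ defining equations of $U$) is the same linear algebra as the paper's choice of $d_0\le d$ free coordinates on the pulled-back subspace $S=\Phi^{-1}(U-y)$, just bookkept from the other side.

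One correction to your closing remark: you need neither invertibility of the weights $j$ nor $p>2$. The order-$2$ confluent Vandermonde determinant equals $\pm\prod_{l<l'}(x_{l'}-x_l)^4$ as an identity over $\Z$, so it is nonzero for distinct nodes in every characteristic. In any case $p\ge 3$ is automatic, since a non-degenerate $\mu$ cannot be supported in $(-1,1)$.
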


\begin{proof}
Consider the linear map $\Phi\colon\FF_p^{2k+2}\to\FF_p^2\times\FF_{p^k}^2$ given by
\[
\Phi(c_1,\dots,c_{2k+2}) = \left(\sum_{j=1}^{2k+2} c_j a^j, \sum_{j=1}^{2k+2} jc_j a^{j-1}, \sum_{j=1}^{2k+2} c_j\alpha^j, \sum_{j=1}^{2k+2} jc_j\alpha^{j-1}\right).
\]
We claim that $\Phi$ is injective. Indeed, if $\Phi(c_1,\dots,c_{2k+2})=0$, then $P(a)=P'(a)=P(\alpha)=P'(\alpha)=0$ for $P(x)=\sum_{j=1}^{2k+2} c_jx^j$. Hence $(x-a)^2f_{\alpha}^2\mid P$, where $f_{\alpha}$ is the minimal polynomial of $\alpha$ over $\FF_p$. But $\deg f_{\alpha}=k$ and $\deg P\le 2k+2$, so this forces $P=0$ or $P=c(x-a)^2f_{\alpha}^2$. The latter is impossible since $P(0)=0$, so we must have $P=0$, proving that~$\Phi$ is injective.

Condition on the values of $\omega_{2k+3},\dots,\omega_m$, and put
\[
y=\left(\sum_{j=2k+3}^m\omega_j a^j,\sum_{j=2k+3}^m j\omega_j a^{j-1},\sum_{j=2k+3}^m\omega_j \alpha^j,\sum_{j=2k+3}^m j\omega_j\alpha^{j-1}\right).
\]
We need to bound the probability that $\Phi(\omega_1,\dots,\omega_{2k+2})\in U-y$. Let $S\coloneqq\Phi^{-1}(U-y)$. If $S$ is empty there is nothing to prove. Otherwise, $S$ is an affine subspace of $\FF_p^{2k+2}$ of some dimension $d_0\le d$.

Choose a set $J\sub\set{1,\dots,2k+2}$ of size~$d_0$ such that the coordinate projection $\pi_J\colon\FF_p^{2k+2}\to\FF_p^J$ is one-to-one on~$S$. For every $u\in\pi_J(S)$, let $c(u)$ denote the unique element of $S$ such that $\pi_J(c(u))=u$. It follows that
\begin{align*}
  \Pr\bigl((\omega_1,\dots,\omega_{2k+2})\in S\bigr) &= \sum_{u\in\pi_J(S)} \prod_{j=1}^{2k+2}\mu(c_j(u)) \le \norm{\mu}_\infty^{2k+2-d_0}\sum_{u\in\pi_J(S)}\prod_{j\in J}\mu(u_j) \\
  &\le \norm{\mu}_\infty^{2k+2-d_0} \le \norm{\mu}_\infty^{2k+2-d}.
\end{align*}
The bound is uniform in the conditioned values of $\omega_{2k+3},\dots,\omega_m$. Averaging over these coefficients proves the lemma.
\end{proof}

The main tool that we will use is the following application of Balog--Szemer\'{e}di--Gowers, and of the quasi-polynomial version of Freiman's theorem by Sanders.

\begin{prop}\label{prop:dichotomy}
Let $m\ge 1$ and $2\le k\le \frac{m-2}{2}$ be positive integers, let $a\in\FF_p^{\times}$, and let $\alpha\in\FFF_{p^k}$. Suppose that $\supp(\mu)\sub(-p/2,p/2)$.

Then, for any $K>2$,
\[
\text{either} \qquad \norm{\nu_{a,\alpha}^{(1,2m)}}_2 < \frac{1}{K}\norm{\nu_{a,\alpha}^{(1,m)}}_2 \qquad \text{or} \qquad \norm{\nu_{a,\alpha}^{(1,m)}}_2^2 \le \frac{1}{p^{2k + 2 - D_K}}
\]
for a nonnegative real number $D_K=O_{\mu}(\log^{3+o(1)}K)$.
\end{prop}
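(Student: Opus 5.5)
Write $\nu=\nu_{a,\alpha}^{(1,m)}$ and $\nu'=\nu_{a,\alpha}^{(m+1,2m)}$, so that $\nu_{a,\alpha}^{(1,2m)}=\nu*\nu'$. By \Lref{lem:linear-transform}, $\nu'=(A^m)_*\nu$, where $A=A_a\oplus A_\alpha$ is an invertible $\FF_p$-linear map of $G=\FF_p^2\times\FF_{p^k}^2$ (invertible because $a,\alpha\neq0$). Hence $\norm{\nu'}_2=\norm{\nu}_2$. The plan is to assume the first alternative fails, i.e.\ $\norm{\nu*\nu'}_2\ge K^{-1}\norm{\nu}_2$, and deduce the second alternative. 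In that case $\nu$ and $\nu'$ have large ``additive energy'' against each other, in the sense that
\[
\norm{\nu*\nu'}_2^2\ge K^{-2}\norm{\nu}_2\norm{\nu'}_2 .
\]

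\textbf{Reduction to sets.} First we pass from measures to sets by dyadic pigeonholing. Decompose $\nu$ into level sets $X_i=\{x:\nu(x)\in(2^{-i-1},2^{-i}]\}$, and keep only the $O(\log K)$ levels relevant to the $L^2$ mass (the rest contribute negligibly). Doing the same for $\nu'$, we find sets $X,Y$ with $\nu(x)\approx\lambda$ on $X$, $\nu'(y)\approx\lambda'$ on $Y$, and $\lambda^2|X|\ge K^{-O(1)}\norm{\nu}_2^2$ (similarly for $Y$), such that $E(X,Y)\ge K^{-O(1)}|X|^{3/2}|Y|^{3/2}$ and $|X|\approx|Y|$ up to $K^{O(1)}$.

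\textbf{Additive structure.} The asymmetric Balog--Szemer\'edi--Gowers theorem then gives $X'\subseteq X$ with $|X'|\ge K^{-O(1)}|X|$ and $|X'-X'|\le K^{O(1)}|X'|$. Since $G$ is an $\FF_p$-vector space (of bounded torsion $p$, but with $p$ possibly large), we want a subspace rather than a coset progression. Here I would invoke Sanders' quasi-polynomial Bogolyubov--Ruzsa in the form valid for general abelian groups: $2X'-2X'$ contains a proper coset progression $H+P$ of rank $O(\log^{3+o(1)}K)$ and size $\ge \exp(-O(\log^{3+o(1)}K))|X'|$. Then $X'$ is covered by $\exp(O(\log^{3+o(1)}K))$ translates of $H+P$. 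The progression part $P$ spans an $\FF_p$-subspace of dimension at most its rank, so $H+P$ lies in a subspace $U$ with
\[
\dim_{\FF_p}U\le \log_p|H|+O(\log^{3+o(1)}K),
\]
and $|H|\le|X'-X'|\cdot 16$. Hence $X'$ is covered by $\exp(O(\log^{3+o(1)}K))$ cosets of $U$.

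\textbf{Applying the subspace bound.} Now use \Lref{lem:affine-subspace}. Let $d=\dim U$. One coset $U+g$ carries $\nu$-mass at least $\lambda|X'|e^{-O(\log^{3+o(1)}K)}$, while the lemma gives $\nu(U+g)\le\norm{\mu}_\infty^{2k+2-d}$. On the other side, $|X'|\le|U|\,e^{O(\cdot)}=p^de^{O(\cdot)}$ and $\norm{\nu}_2^2\lesssim K^{O(1)}\lambda^2|X'|\le K^{O(1)}\lambda\,\nu(U+g)e^{O(\cdot)}$. Combining with $\lambda\le\nu(U+g)/|X'|$-type estimates yields
\[
\norm{\nu}_2^2\le e^{O(\log^{3+o(1)}K)}\,p^{-d}\,\norm{\mu}_\infty^{2(2k+2-d)} .
\]
Since $\supp\mu\subseteq(-p/2,p/2)$ forces $\norm{\mu}_\infty\ge 1/p$, this is not yet enough, so the argument must exploit $\nu(U+g)\le1$ and $|X'|\lesssim p^d$ more carefully. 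The right inequality to aim for is $\norm{\nu}_2^2\lesssim p^{-(2k+2)}\cdot p^{(2k+2-d)}\norm{\mu}_\infty^{2k+2-d}\cdot e^{O(\cdot)}$, and we must check that the resulting bound has the form $p^{-(2k+2)+D_K}$. The worry is that the factor $(p\norm{\mu}_\infty)^{2k+2-d}$ could be large. The fix I would try first is to note that the subspace $U$ can be taken inside $2X'-2X'$-generated structure and compare using the $L^2$ level: $\lambda\ge\norm{\nu}_2^2K^{-O(1)}$ together with $\lambda|U|\gtrsim$ mass forces $p^{d}\ge\ldots$. I expect this bookkeeping — making the codimension saving from \Lref{lem:affine-subspace} beat the trivial $p^{-\mathrm{codim}}$ loss so that only $D_K=O(\log^{3+o(1)}K)$ dimensions are lost — to be the main obstacle. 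The Freiman/Sanders step itself is quoted, and the dyadic/BSG steps are routine with polynomial losses in $K$.
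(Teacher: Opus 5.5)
Your skeleton is the paper's (BSG, then Sanders' quasi-polynomial Freiman/Bogolyubov theorem, then \Lref{lem:affine-subspace}). However, the argument is not finished, and the step you leave open is where the proof actually happens.

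The inequality $\norm{\nu}_2^2\le e^{O(\log^{3+o(1)}K)}p^{-d}\norm{\mu}_\infty^{2(2k+2-d)}$ does not follow from what you wrote. To get a factor $p^{-d}$ out of $\norm{\nu}_2^2\lesssim \nu(U+g)^2/|X'|$ you need a \emph{lower} bound on $|X'|$ in terms of $|U|$, but you only recorded the upper bound $|X'|\le p^d e^{O(\cdot)}$.

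More importantly, your worry about $(p\norm{\mu}_\infty)^{2k+2-d}$ comes from giving \Lref{lem:affine-subspace} the wrong role. It is never meant to beat the trivial $p^{-\mathrm{codim}}$ bound. It is used exactly once, to show that $U$ has codimension $O_\mu(\log^{3+o(1)}K)$. For that you need a fact you never state: the structured set must carry a $K^{-O(1)}$ fraction of the $L^1$-mass of $\nu$, not merely of its $L^2$-mass.

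In your set-up you get this as follows:
\begin{enumerate}
\item Pigeonhole on contributions to $\norm{\nu*\nu'}_2$ rather than to $\norm{\nu}_2$. Levels outside $[K^{-O(1)}\norm{\nu}_2^2,\,K^{O(1)}\norm{\nu}_2^2]$ contribute at most $\norm{\nu}_2/(4K)$, by $\norm{f*g}_2\le\norm{f}_1\norm{g}_2$ and $\norm{f*g}_2\le\norm{f}_2\norm{g}_1$.
\item Then $\norm{(\nu 1_X)*(\nu' 1_Y)}_2\le\nu(X)\,\norm{\nu'}_2=\nu(X)\,\norm{\nu}_2$ forces $\nu(X)\ge K^{-O(1)}$.
\item Hence $\nu(X')\ge K^{-O(1)}$, and also $|X'|\le 2/\lambda\le K^{O(1)}\norm{\nu}_2^{-2}$.
\end{enumerate}
In the paper this is automatic. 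Varj\'u's measure version of BSG yields a set $S$ with $|S|\approx\norm{\nu}_2^{-2}$ and $\check\nu*\nu\ge 1/(CK^C|S|)$ on $S$, whence $\nu(S+w)\ge 1/(CK^C)$ for some $w$.

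With that in hand, the proof closes quickly. Among the $e^{O(\log^{3+o(1)}K)}$ cosets of $U$ covering $X'$, one satisfies $\nu(U+g)\ge e^{-O(\log^{3+o(1)}K)}$, a bound independent of $p$ and $k$. Comparing with $\nu(U+g)\le\norm{\mu}_\infty^{2k+2-d}$ gives $2k+2-d\le O(\log^{3+o(1)}K)/\log(\norm{\mu}_\infty^{-1})$.

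Next, let $r$ be the rank of $P$ and note $H\sub 2X'-2X'$ (as $0\in P$). Then
\[
p^{d}\le p^{r}|H|\le p^{r}|2X'-2X'|\le p^{r}K^{O(1)}|X'|\le p^{r}K^{O(1)}\norm{\nu}_2^{-2}.
\]
So $\norm{\nu}_2^2\le K^{O(1)}p^{-(2k+2-D_K)}$, with $r+O(\log K)/\log p$ absorbed into $D_K=O_\mu(\log^{3+o(1)}K)$.

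No comparison of $\norm{\mu}_\infty$ with $1/p$ is needed. The measure $\mu$ enters only through the factor $\log(\norm{\mu}_\infty^{-1})$ in $D_K$. This is exactly how the paper concludes, using $p^{\dim V}=|V|\le|P+V|\lesssim e^{\log^{3+o(1)}K}\norm{\nu}_2^{-2}$ for the subspace part of Sanders' decomposition.
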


\begin{proof}
For convenience, write $\nu=\nu_{a,\alpha}^{(1,m)}$ and $A=A_{(a,\alpha)}$ of \Lref{lem:linear-transform}, so that $\nu_{a,\alpha}^{(m+1,2m)} = (A^m)_*\nu$. Then $\norm{\nu_{a,\alpha}^{(m+1,2m)}}_2 = \norm{\nu}_2$, and $\nu_{a,\alpha}^{(1,2m)} = \nu*((A^m)_*\nu)$. Suppose that
\[
\norm{\nu_{a,\alpha}^{(1,2m)}}_2 \ge \frac{1}{K}\norm{\nu}_2,
\]
i.e.,
\[
\norm{\nu*((A^m)_*\nu)}_2 \ge \frac{1}{K}\norm{\nu}_2.
\]
We apply Varj\'{u}'s version of the Balog--Szemer\'{e}di--Gowers theorem (\cite[Lemma 15]{Varju12}) with $\nu_1=\nu$ and $\nu_2=\nu_{a,\alpha}^{(m+1,2m)}$. We get a subset $H\sub\FF_p^2\times\FF_{p^k}^2$ such that
\[
\frac{1}{CK^C\norm{\nu}_2^2} \le \left|H\right| \le \frac{CK^C}{\norm{\nu}_2^2},
\]
\begin{equation}\label{eq:HHH}
\left|H+H+H\right| \le K^C\left|H\right|,
\end{equation}
and
\[
\min_{y\in H}(\check{\nu}*\nu)(y) \ge \frac{1}{CK^C\left|H\right|}
\]
where $\check{\nu}(y)=\nu(-y)$. Since
\[
\sum_{y\in H}(\check{\nu}*\nu)(y) = \sum_{w\in\FF_p^2\times\FF_{p^k}^2}\nu(w)\nu(H+w),
\]
it follows that
\[
\nu(H+w) \ge \frac{1}{CK^C}
\]
for some $w\in\FF_p^2\times\FF_{p^k}^2$.

We next apply Sanders' quantitative version of Freiman's theorem \cite[Theorem~1.4]{Sanders13}. See also \cite{Raghavan25} for stronger results (the difference does not matter for our application). We apply it to the set $H$, and get from \eqref{eq:HHH} that there exist a set $X\sub\FF_p^2\times\FF_{p^k}^2$, a centered convex progression $P\sub\FF_p^2\times\FF_{p^k}^2$ (recall Definition \ref{def:CCCP}), and an $\FF_p$-subspace $V\le\FF_p^2\times\FF_{p^k}^2$, with
\begin{equation}\label{eq:dichotomy-dimP}
\dim P \le \log^{3+o(1)}K,
\end{equation}
\[
\left|P+V\right| \le \exp(\log^{3+o(1)}K)\left|H\right|,
\]
and
\[
\left|X\right| \le \exp(\log^{3+o(1)}K),
\]
such that
\[
H \sub X+P+V.
\]
We claim that
\begin{equation}\label{eq:dichotomy-dimV}
\dim V \ge 2k+2 - D_K\qquad\text{where}\qquad D_K=O_{\mu}(\log^{3+o(1)}K).
\end{equation}
Indeed, if $\dim P + \dim V > 2k+2$, this follows from \eqref{eq:dichotomy-dimP} by taking $D_K=\dim P$. Otherwise, assume $\dim P + \dim V \le 2k+2$. Then $P+V$ (and thus also $P+V+x+w$ for any $x\in X$) is contained in an affine subspace of dimension at most $\dim P + \dim V \le 2k+2$, so by \Lref{lem:affine-subspace}
\[
\nu(P+V+x+w) \le \norm{\mu}_{\infty}^{2k+2 - \dim P - \dim V}.
\]
for any $x\in X$. On the other hand, by the lower bound on $\nu(H+w)$, there exists an element $x_0\in X$ such that
\[
\nu(P+V+x_0+w) \ge \frac{1}{CK^C\left|X\right|}.
\]
Comparing the two bounds for $\nu(P+V+x_0+w)$ and taking logarithms, we obtain
\[
\dim V \ge 2k+2 - \dim P - \frac{\log(CK^C|X|)}{\log\left(\norm{\mu}_{\infty}^{-1}\right)} \ge 2k+2 - D_K
\]
for $D_K \coloneqq \dim P + \frac{\log(CK^C|X|)}{\log\left(\norm{\mu}_{\infty}^{-1}\right)}\le C_{\mu}\log^{3+o(1)}K$, where $C_{\mu}>0$ depends only on $\norm{\mu}_{\infty}$, proving \eqref{eq:dichotomy-dimV}.

Finally,
\[
p^{2k+2-D_K} \le p^{\dim V} = \left|V\right| \le \left|P+V\right| \le \exp\left(\log^{3+o(1)}K\right)\left|H\right| \le \frac{K_0}{\norm{\nu}_2^2}
\]
for
\[
K_0 \coloneqq CK^C\exp\left(\log^{3+o(1)}K\right) = \exp(\log^{3+o(1)}K).
\]
Therefore,
\[
\norm{\nu}_2^2 \le \frac{K_0}{p^{2k+2 - D_K}} \le \frac{1}{p^{2k+2 - D_K - \log K_0 / \log p}},
\]
and the proposition follows by replacing $D_K$ with $D_K + \frac{\log K_0}{\log p}$.
\end{proof}

In the next proposition, we bound the probability that $R$ has an admissible double root in $\FF_p$ and a double root in $\FFF_{p^k}$. We write $B_{p^k}$ for the event that $R$ has a double root in $\FFF_{p^k}$ (which may be admissible or inadmissible).

\begin{prop}\label{prop:no-double-roots-big-ext}
Let $\PP$ be a finite set of primes satisfying \ref{item:P1}. Set $m\coloneqq\floor{\frac{n-1}{3}}$ and, for each $p\in\PP$,
\begin{equation}\label{eq:def kmu}
k_{\mu}(p) \coloneqq \max\set{k\ge 1 \suchthat p^{k+2} \le \norm{\mu}_2^{-2m}}.
\end{equation}
Then, if $n$ is large enough,
\[
\Pr\left(\bigcup_{p\in\PP}\bigcup_{k=\ceil{\log^4 n}}^{k_{\mu}(p)}(B_p^{\ad}\cap B_{p^k})\right) \le \frac{2}{n^2}S(\PP) + 2S_2(\PP) + o\left(\frac{1}{\log n}\right).
\]
\end{prop}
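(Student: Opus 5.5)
The plan is a union bound over $p\in\PP$, $\ceil{\log^4 n}\le k\le k_\mu(p)$, admissible $a\in\FF_p$ and $\alpha\in\FFF_{p^k}$ (with $\alpha\ne a$ automatically, since $k\ge 2$). For each such pair we bound $\Pr(R(a)=R'(a)=R(\alpha)=R'(\alpha)=0)$ by an $L^\infty$ norm and then by the dichotomy of \Pref{prop:dichotomy}. Write $\nu_j$ for $\nu_{a,\alpha}^{(1,jm)}$. Since $3m\le n-1$, the law of $(R(a),R'(a),R(\alpha),R'(\alpha))$ is $\nu_3$ convolved with an independent law (from $\omega_0$, the coefficients above $3m$, and the deterministic $x^n$ term). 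Hence the probability is at most $\norm{\nu_3}_\infty$.

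Next, $\nu_3=\nu_2*\nu_{a,\alpha}^{(2m+1,3m)}$, and by \Lref{lem:linear-transform} the second factor is a linear-isomorphic pushforward of $\nu_1$, so it has the same $L^2$ norm. Cauchy--Schwarz then gives
\[
\norm{\nu_3}_\infty\le\norm{\nu_2}_2\norm{\nu_1}_2 .
\]
We apply \Pref{prop:dichotomy} with $K=n^{4}$, so that $D_K=O_\mu(\log^{3+o(1)}n)$, which is below $\frac12\log^4 n\le k/2$ for large $n$.

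\emph{Case (ii).} Here $\norm{\nu_1}_2^2\le p^{-(2k+2-D_K)}$. Using $\norm{\nu_2}_2\le\norm{\nu_1}_2$, the contribution of the pair $(a,\alpha)$ is at most $p^{-(2k+2-D_K)}$. Summing over the at most $p^{k+1}$ pairs gives $p^{-(k+1-D_K)}\le p^{-k/2}$. Summing over $k\ge\log^4 n$ yields $O(p^{-2})$ with room to spare. This should account for the $2S_2(\PP)$ term, or even an $o(S_2(\PP))$ one.

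\emph{Case (i).} Here the contribution is at most $\norm{\nu_1}_2^2/K$, so it remains to bound $\Sigma:=\sum_{a\in\AA_p}\sum_{\alpha\in\FFF_{p^k}}\norm{\nu_1}_2^2$ with a factor $1/p$. As in the proof of \Pref{prop:full-mixing-bound}, take an independent copy $\widetilde R_{1,m}$ and put $\Delta=R_{1,m}-\widetilde R_{1,m}$, whose coefficients are i.i.d.\ with law $\mu*\check\mu$. Then
\[
\Sigma\le\E\left[\#\set{a\in\AA_p\suchthat \Delta(a)=\Delta'(a)=0}\cdot\#\set{\alpha\suchthat\Delta(\alpha)=\Delta'(\alpha)=0}\right].
\]
On $\set{\Delta=0}$, which has probability $\norm{\mu}_2^{2m}$, the count is at most $p^{k+1}$. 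By the definition \eqref{eq:def kmu} of $k_\mu(p)$ this part contributes at most $1/p$. On $\set{\Delta\ne0}$ (still nonzero mod $p$ because $\supp\mu\sub(-p/2,p/2)$), the second count is at most $m$. So this part is at most $m\,\E[N_p(\Delta)]$, where $N_p(\Delta)$ is the number of admissible double roots of $\Delta$ in $\FF_p$. Applying \Pref{prop:full-mixing-bound} to the measure $\mu*\check\mu$ (which is non-degenerate, symmetric, finitely supported) over the first $m\asymp n/3$ coefficients, exactly as in \Cref{cor:mix-app1}(1), gives $\E[N_p(\Delta)]\le 1/p+o(1/p)$. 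Hence $\Sigma\le (m+2)/p$. Summing over $k\le n$, case (i) contributes at most $n(m+2)/(pK)\le 2/(pn^2)$ per prime, giving $\frac{2}{n^2}S(\PP)$. Any leftover exponentially small mixing errors, multiplied by $|\PP|n$, are $o(1/\log n)$ by the remark following Assumption~\ref{assum:P-prop}.

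\emph{Main obstacle.} I expect the hardest step to be obtaining the $1/p$ saving in case (i): the crude bound $\Sigma\le m^2+1$ is useless after summing over up to $\exp(\sqrt n/\log^3 n)$ primes. This requires rerunning the Fourier-mixing argument of \Sref{sec:konyagin} for the difference polynomial $\Delta$ with truncated length $m$. I also need to check that the choice $K=n^4$ keeps $D_K$ below $\log^4 n/2$ uniformly in $p$. This works because the $\log K_0/\log p$ correction in the proof of \Pref{prop:dichotomy} only shrinks as $p$ grows.
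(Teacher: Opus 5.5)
Your proposal is correct and is essentially the paper's proof: you bound each pair's probability by $\norm{\nu_{a,\alpha}^{(1,m)}}_2\norm{\nu_{a,\alpha}^{(1,2m)}}_2$, split via \Pref{prop:dichotomy} with $K$ a power of $n$, and control $\sum_{a,\alpha}\norm{\nu_{a,\alpha}^{(1,m)}}_2^2$ through double roots of the difference polynomial. As in the paper, the event $\set{S=0}$ is absorbed by the definition of $k_\mu(p)$ and the rest is handled by \Cref{cor:mix-app1} applied to $\mu*\check\mu$. The differences are cosmetic: the paper bounds the double roots of $S$ over all the extensions $\FF_{p^k}$ at once by $n$ and so can take $K=n^3$, whereas you bound each $k$ separately by $m$ and compensate with $K=n^4$; you should also add the one-line check, as the paper does, that $p^{k+2}\le\norm{\mu}_2^{-2m}$ and $p\ge n^{B_0}$ give $2k+2\le m$, a hypothesis of \Pref{prop:dichotomy}.
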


\begin{proof}
For convenience, we write $k_0\coloneqq\ceil{\log^4 n}$.

Fix a prime $p\in\PP$ and an integer $k_0\le k\le k_{\mu}(p)$. We note that
\[
2k+2 \le \frac{2\log\norm{\mu}_2^{-2m}}{\log p} = \frac{2H_2(\mu)}{\log p}m \le m
\]
by our assumptions. For any $a\in\AA_p$ and $\alpha\in\FFF_{p^k}$,
\[
\nu_{a,\alpha}^{(n)} = \nu_{a,\alpha}^{(1,2m)}*\nu_{a,\alpha}^{(2m+1,3m)}*\lambda
\]
(for $\lambda$ which corresponds to all coefficients outside of $[1,3m]$), and thus by Young's inequality
\[
\Pr(R(a)=R'(a)=R(\alpha)=R'(\alpha)=0) \le \norm{\nu_{a,\alpha}^{(n)}}_{\infty} \le \norm{\nu_{a,\alpha}^{(1,2m)}}_2 \norm{\nu_{a,\alpha}^{(2m+1,3m)}}_2.
\]
Recalling that
\[
\norm{\nu_{a,\alpha}^{(2m+1,3m)}}_2 = \norm{\nu_{a,\alpha}^{(1,m)}}_2
\]
we have
\[
\Pr(R(a)=R'(a)=R(\alpha)=R'(\alpha)=0) \le \norm{\nu_{a,\alpha}^{(1,m)}}_2 \norm{\nu_{a,\alpha}^{(1,2m)}}_2,
\]
and thus
\begin{equation}\label{eq:double-root-pk}
  \Pr(B_p^{\ad}\cap B_{p^k}) \le \sum_{a\in\AA_p}\sum_{\alpha\in\FFF_{p^k}}\norm{\nu_{a,\alpha}^{(1,m)}}_2 \norm{\nu_{a,\alpha}^{(1,2m)}}_2.
\end{equation}

We set $K=n^3$, and assume that $n$ is sufficiently large so that, for $D_K$ of \Pref{prop:dichotomy}, we have
\[
D_K \le C_{\mu}\log^{3+o(1)}K \le \log^4 n - 1.
\]
By \Pref{prop:dichotomy}, for any $a\in\AA_p$ and $\alpha\in\FFF_{p^k}$,
\[
\text{either} \qquad \norm{\nu_{a,\alpha}^{(1,2m)}}_2 < \frac{1}{n^3}\norm{\nu_{a,\alpha}^{(1,m)}}_2 \qquad \text{or} \qquad \norm{\nu_{a,\alpha}^{(1,m)}}_2^2 \le \frac{1}{p^{2k+3 - \log^4 n}}.
\]

We also recall that
\[
\norm{\nu_{a,\alpha}^{(1,2m)}}_2 \le \norm{\nu_{a,\alpha}^{(1,m)}}_2.
\]
It therefore follows that
\[
\norm{\nu_{a,\alpha}^{(1,m)}}_2\norm{\nu_{a,\alpha}^{(1,2m)}}_2 \le \frac{1}{n^3}\norm{\nu_{a,\alpha}^{(1,m)}}_2^2 + \frac{1}{p^{2k + 3 - \log^4 n}}
\]
for any $a\in\AA_p$ and any $\alpha\in\FFF_{p^k}$. Substituting back in \eqref{eq:double-root-pk}, we have
\begin{align*}
  \Pr(B_p^{\ad}\cap B_{p^k}) &\le \sum_{a\in\AA_p}\sum_{\alpha\in\FFF_{p^k}} \left(\frac{1}{n^3}\norm{\nu_{a,\alpha}^{(1,m)}}_2^2 + \frac{1}{p^{2k+3 - \log^4 n}}\right) \\
  &\le \frac{1}{p^{k+2 - \log^4 n}} + \frac{1}{n^3}\sum_{a\in\AA_p}\sum_{\alpha\in\FFF_{p^k}} \norm{\nu_{a,\alpha}^{(1,m)}}_2^2
\end{align*}

Let $\widetilde{R}$ be an independent copy of $R$, and write $S=R_{1,m}-\widetilde{R}_{1,m}$. This is a random polynomial with coefficient law $\mu_S=\mu*\check{\mu}$, where $\check{\mu}(t)=\mu(-t)$. Therefore $\norm{\mu_S}_2\le\norm{\mu}_2$, and $\supp(\mu_S)\sub[-2H_{\mu},2H_{\mu}]$. For any prime $p\in\PP$ and any $k\ge 2$, we write $\DD_{p,p^k}(S)$ for the number of pairs $(a,\alpha)\in\AA_p\times\FFF_{p^k}$ such that $a,\alpha$ are double roots of $S$. Then
\begin{align*}
  \sum_{a\in\AA_p}\sum_{\alpha\in\FFF_{p^k}} \norm{\nu_{a,\alpha}^{(1,m)}}_2^2 &= \sum_{a\in\AA_p}\sum_{\alpha\in\FFF_{p^k}} \Pr\left(\begin{array}{c}R_{1,m}(a)=\widetilde{R}_{1,m}(a), R_{1,m}'(a)=\widetilde{R}_{1,m}'(a), \\ R_{1,m}(\alpha)=\widetilde{R}_{1,m}(\alpha), R_{1,m}'(\alpha)=\widetilde{R}_{1,m}'(\alpha)\end{array}\right) \\
  &= \sum_{a\in\AA_p}\sum_{\alpha\in\FFF_{p^k}} \Pr\left(\begin{array}{c} S(a)=S'(a)=0 \\ S(\alpha)=S'(\alpha)=0\end{array}\right) = \E[\DD_{p,p^k}(S)],
\end{align*}
and thus
\[
\Pr(B_p^{\ad}\cap B_{p^k}) \le \frac{1}{n^3}\E[\DD_{p,p^k}(S)] + \frac{1}{p^{k+2 - \log^4 n}}.
\]

Summing over $k$, we have
\[
\sum_{k=k_0}^{k_{\mu}(p)}\Pr(B_p^{\ad}\cap B_{p^k}) \le \frac{1}{n^3}\sum_{k=k_0}^{k_{\mu}(p)}\E[\DD_{p,p^k}(S)] + \sum_{k=k_0}^{k_{\mu}(p)}\frac{1}{p^{k+2 - \log^4 n}}.
\]
The second term is simple to estimate:
\[
\sum_{k=k_0}^{k_{\mu}(p)}\frac{1}{p^{k+2 - \log^4 n}} \le \frac{2}{p^{k_0+2 - \log^4 n}} \le \frac{2}{p^2}.
\]
To estimate the remaining sum, we replace the sum over double roots in $\FF_{p^k}$ with a crude bound of $n$, leaving only the randoness in the roots $a\in\FF_p$. Formally, if $S\ne 0$, then
\begin{equation}\label{eq:Dpp}
\sum_{k=k_0}^{k_{\mu}(p)}\DD_{p,p^k}(S) \le n\sum_{a\in\AA_p}\ind{\set{S(a)=S'(a)=0}},
\end{equation}
On the other hand, if $S=0$, which happens with probability $\Pr(S=0)=\norm{\mu}_2^{2m}$, then $\DD_{p,p^k}(S)\le p^{k+1}$. Therefore, by \Cref{cor:mix-app1} (applied to $S$),
\begin{align*}
  \sum_{k=k_0}^{k_{\mu}(p)}\E[\DD_{p,p^k}(S)] &= \E\left[\sum_{k=k_0}^{k_{\mu}(p)}\DD_{p,p^k}(S)\right] \\
  &\le \E\left[\sum_{k=k_0}^{k_{\mu}(p)}\DD_{p,p^k}(S)\ind{\set{S\ne 0}}\right] + \sum_{k=k_0}^{k_{\mu}(p)}p^{k+1}\Pr(S=0) \\
  &\stackrel{\mathclap{(*)}}{\le} n\E\left[\sum_{a\in\AA_p}\ind{\set{S(a)=S'(a)=0}}\right] + 2p^{k_{\mu}(p)+1}\norm{\mu}_2^{2m} \\
  &\stackrel{\mathclap{(**)}}{\le} n\sum_{a\in\AA_p}\Pr(S(a)=S'(a)=0) + \frac{2}{p} \\
  &\stackrel{\mathclap{(\dagger)}}{\le} n\sum_{a\in\AA_p}\left(\frac{1}{p^2} + \exp\left(-c\frac{(1-\norm{\mu}_2^2)\sqrt{n}}{\log^2 n}\right)\right) + \frac{2}{p} \\
  &\le \frac{n}{p} + np\exp\left(-c\frac{(1-\norm{\mu}_2^2)\sqrt{n}}{\log^2 n}\right) + \frac{2}{p},
\end{align*}
where in $(*)$ we used \eqref{eq:Dpp} for the left term, in $(**)$ we used \eqref{eq:def kmu} for the right term, and in $(\dagger)$ we used Corollary \ref{cor:mix-app1} (applied to $S$).
Consequently, for sufficiently large $n$
\begin{align*}
  \sum_{k=k_0}^{k_{\mu}(p)}\Pr(B_p^{\ad}\cap B_{p^k}) &\le \frac{1}{n^2p} + \frac{p}{n^2}\exp\left(-c\frac{(1-\norm{\mu}_2^2)\sqrt{n}}{\log^2 n}\right) + \frac{2}{n^3p} + \frac{2}{p^2} \\
  &\le \frac{2}{n^2p} + \frac{2}{p^2} + \exp\left(-c\frac{(1-\norm{\mu}_2^2)\sqrt{n}}{2\log^2 n}\right).
\end{align*}
Finally, summing over $p$,
\begin{align*}
  \sum_{p\in\PP}\sum_{k=k_0}^{k_{\mu}(p)}\Pr(B_p^{\ad}\cap B_{p^k}) &\le \frac{2}{n^2}S(\PP) + 2S_2(\PP) + \left|\PP\right|\exp\left(-c\frac{(1-\norm{\mu}_2^2)\sqrt{n}}{2\log^2 n}\right),
\end{align*}
and the last term is $o\left(\frac{1}{\log n}\right)$ since $\left|\PP\right|\le\exp\left(\frac{C\sqrt{n}}{\log^3 n}\right)$.
\end{proof}

\section{Proof of the main theorem}\label{sec:proof}

For the proof of \Tref{thm:main}, we need the following observation:

\begin{prop}\label{prop:p^2-divides-disc}
Let $P\in\Z[x]$ be a monic polynomial with $\disc(P)\ne 0$, and let $p$ be an odd prime. We write $\overline{P}$ for the reduction of $P$ modulo $p$. Suppose that $a\in\FF_p$ is a double root of $\overline{P}$. Then $p^2\mid\disc(P)$ if and only if at least one of the following holds:
\begin{enumerate}
  \item\label{item:p^2-i} $a$ is a triple root of $\overline{P}$;
  \item\label{item:p^2-ii} there is a monic irreducible polynomial $Q\ne x-a$ such that $Q^2\mid\overline{P}$, in which case $v_p(\disc(P))\ge 1+\deg Q$;
  \item\label{item:p^2-iii} $a$ lifts to a double root of $P$ modulo $p^2$.
\end{enumerate}
\end{prop}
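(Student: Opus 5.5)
We will compute $v_p(\disc(P))$ locally, through the factorization of $P$ over $\Z_p$. By Hensel's lemma, the factorization $\overline{P}=\prod_i \overline{G_i}^{e_i}$ into pairwise coprime prime powers lifts to a factorization $P=\prod_i P_i$ over $\Z_p$, with $P_i$ monic and $\overline{P_i}=\overline{G_i}^{e_i}$. We then use
\[
\disc(P)=\prod_i\disc(P_i)\prod_{i<j}\Res(P_i,P_j)^2 .
\]
The reductions of distinct $P_i$ are coprime, so each resultant is a $p$-adic unit and $v_p(\disc P)=\sum_i v_p(\disc P_i)$. Each factor with $e_i=1$ has $\overline{P_i}$ separable and contributes $0$. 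Let $P_a$ be the factor with $\overline{P_a}=(x-a)^{e}$, where $e\ge2$ by hypothesis. The statement then follows from three local computations:
\begin{itemize}
\item[(A)] if $e=2$, then $v_p(\disc P_a)=1$ unless (iii) holds, and $v_p(\disc P_a)\ge2$ if (iii) holds;
\item[(B)] if $e\ge3$, then $v_p(\disc P_a)\ge2$;
\item[(C)] if $e_i\ge2$ for a factor with $G_i=Q\ne x-a$, then $v_p(\disc P_i)\ge\deg Q$.
\end{itemize}

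\emph{The ``if'' direction.} Suppose (ii) holds. By (C), together with $v_p(\disc P_a)\ge1$, we get $v_p(\disc P)\ge 1+\deg Q\ge2$. The bound $v_p(\disc P_a)\ge1$ holds because $\overline{P_a}$ has a repeated root. The same argument proves the stated inequality $v_p(\disc(P))\ge 1+\deg Q$. For (C) itself, I argue in the unramified extension $\Z_{p^{f}}$ with $f=\deg Q$. There $Q$ splits into the $f$ distinct conjugates of a root $\beta$ of $Q$. Each conjugate contributes a Hensel factor of $P_i$ of degree $e_i\ge2$ whose reduction is a power of a single linear factor, and each such factor has discriminant of valuation $\ge1$. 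The discriminant is multiplicative up to unit resultants, which gives valuation $\ge f$. If (i) holds then $e\ge3$, and (B) applies. For (B), the roots $\theta_1,\dots,\theta_e$ of $P_a$ in $\overline{\Q_p}$ all satisfy $v(\theta_i-a)>0$, so each of the $e(e-1)\ge6$ factors $(\theta_i-\theta_j)$ in $\disc P_a$ has positive valuation. This alone only gives $v_p(\disc P_a)>0$, not $\ge2$. To get an integer lower bound of $2$ I instead use the Newton polygon of $P_a(x+a)$, whose constant and linear coefficients are divisible by $p$. Alternatively, and more cleanly, I write $P_a(x+a)=x^e+p(\cdots)$ and expand $\disc$ as a polynomial in the non-leading coefficients; since $e\ge3$, every monomial has total degree $\ge2$ in those coefficients, all of which are divisible by $p$. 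If (iii) holds, the same monomial argument proves (A) once the shift is taken to be the lift $a_0$: then $P_a(x+a_0)=x^2+ux+w$ with $p\mid u$ and $p^2\mid w$, because $P(a_0)\equiv P'(a_0)\equiv 0 \pmod{p^2}$ and the other Hensel factor is a unit at $a_0$. Hence $\disc=u^2-4w$ has valuation $\ge2$.

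\emph{The ``only if'' direction, and the main obstacle.} Suppose none of (i)--(iii) holds. Then $e=2$, no other factor has $e_i\ge2$, and $v_p(\disc P)=v_p(\disc P_a)$. Write $P_a(x+a)=x^2+ux+w$ with $p\mid u$ and $p\mid w$, so $\disc=u^2-4w\equiv-4w\pmod{p^2}$. Since $p$ is odd, this has valuation exactly $1$ if and only if $p^2\nmid w$. The main obstacle is translating $p^2\mid w$ back into condition (iii); this is where \Lref{lem:double-root-lift} enters. Write $P=P_aU$, where $U$ is a unit near $a$. Then $P(a)\equiv w\,U(a)\pmod{p^2}$, because $u\cdot 0$ vanishes at $x=a$. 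So $p^2\mid w$ is equivalent to $P(a)\equiv0\pmod{p^2}$, i.e.\ $a$ lifts to a root modulo $p^2$. Since $a$ is not a triple root, \Lref{lem:double-root-lift} upgrades this to a lift $a_0$ that is a double root modulo $p^2$, which is (iii). We have assumed (iii) fails, so $p^2\nmid w$, hence $v_p(\disc P)=1$, i.e.\ $p^2\nmid\disc(P)$. The remaining care is bookkeeping of the unit resultants. We also need the standard fact that $\disc(P)$ is unchanged by translation and by passage to the unramified extension.
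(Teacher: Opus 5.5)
Your proof is correct. Its final step, where neither (i) nor (ii) holds, is the same as the paper's. You split off the Hensel factor at $a$ and translate it to $x^2+Bx+C$. Since $p$ is odd, $p^2\mid B^2-4C$ iff $p^2\mid C$ iff $P(a_0)\equiv 0\pmod{p^2}$, and \Lref{lem:double-root-lift} finishes.

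Where you differ is in the lower bounds for (i) and (ii). The paper proves the exact formula $v_p(\disc P)=v_p\bigl(|\Z_p[x]/(P,P')|\bigr)$, using the determinant of multiplication by $P'$ on $\Z_p[x]/(P)$ and Smith normal form. Since this quotient surjects onto $\FF_p[x]/\gcd(\overline P,\overline P')$, it gets $v_p(\disc P)\ge\deg\gcd(\overline P,\overline P')$ in one stroke. A triple root, or an extra repeated factor $Q$, then gives $\ge 2$, respectively $\ge 1+\deg Q$. This needs no further case analysis, no unramified base change, and no facts about the shape of the discriminant polynomial.

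Your route uses the full Hensel factorization and $\disc(fg)=\disc(f)\disc(g)\Res(f,g)^2$, so $v_p(\disc P)$ becomes a sum of local contributions from each repeated factor. This is more elementary in its ingredients and gives slightly finer information, e.g.\ $v_p(\disc P_a)\ge e-1$. The price is the base change to $\Z_{p^f}$ in (C) and the degree count in (B).

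For (B), you should state the one-line justification. The coefficient of $x^{e-k}$ has degree $k$ in the roots, while $\disc$ is homogeneous of degree $e(e-1)$ in the roots. Hence every monomial has total degree at least $e-1\ge 2$ in the non-leading coefficients.
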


Here $v_p(\disc(P))$ denotes the $p$-adic valuation of $\disc(P)$.

Recall that $\Z_p$ are the $p$-adic integers.

\begin{lem}Let $P\in\Z[x]$ be a monic polynomial with $\disc(P)\ne 0$, and let $p$ be a prime. Consider the ideal $(P,P')$ generated by $P$ and $P'$ in $\Z_p[x]$. Then \[v_p(\disc(P)) = v_p(|\Z_p[x]/(P,P')|).\]
\end{lem}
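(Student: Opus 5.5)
The plan is to identify $\disc(P)$, up to a unit, with the resultant $\Res(P,P')$, and then to interpret that resultant as the order of a finite quotient ring. Since $P$ is monic of degree $n$, we have $\disc(P)=\pm\Res(P,P')$. The ring $A\coloneqq\Z_p[x]/(P)$ is a free $\Z_p$-module of rank $n$ with basis $1,x,\dots,x^{n-1}$. Let $m_{P'}$ be the $\Z_p$-linear endomorphism of $A$ given by multiplication by $P'$. I will use the standard identity
\[
\Res(P,P')=\det(m_{P'}) = N_{A/\Z_p}(P'(x)).
\]
This holds because, after passing to a splitting field, both sides equal $\prod_{P(\theta)=0}P'(\theta)$.

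Since $\disc(P)\ne0$, $\det(m_{P'})\ne0$, so $m_{P'}$ is injective with finite cokernel. The cokernel is
\[
A/P'A=\Z_p[x]/(P,P').
\]
Next I apply Smith normal form over the PID $\Z_p$. There are bases in which $m_{P'}$ becomes the diagonal matrix $\operatorname{diag}(p^{e_1},\dots,p^{e_n})$ times units. Hence $v_p(\det m_{P'})=\sum e_i$. On the other hand, the cokernel is isomorphic to $\bigoplus\Z_p/p^{e_i}\Z_p$, which has cardinality $p^{\sum e_i}$. Combining these gives
\[
v_p(\disc(P))=v_p(\Res(P,P'))=\sum_i e_i=v_p\bigl(|\Z_p[x]/(P,P')|\bigr).
\]

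The only step needing care is justifying $\Res(P,P')=\det(m_{P'})$ with the correct sign convention. The sign is irrelevant for valuations, so I would just cite the standard fact that the resultant of a monic $P$ with $G$ equals $\prod_{P(\theta)=0}G(\theta)$. I would also note that the determinant of multiplication by $G$ on $\Z[x]/(P)$ equals the same product, by triangularizing over the splitting field. Everything else is routine structure theory of finitely generated $\Z_p$-modules.
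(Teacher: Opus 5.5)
Your proposal is correct and follows essentially the same route as the paper. The paper also computes the determinant of multiplication by $P'$ on $\Z_p[x]/(P)$ as $\prod_i P'(\alpha_i)=\pm\disc(P)$, diagonalizing over $\overline{\Q_p}$ using that the roots are distinct instead of passing explicitly through $\Res(P,P')$. It then reads off $|\Z_p[x]/(P,P')|$ from the Smith normal form over $\Z_p$, exactly as you do.
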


\begin{proof}
Consider the self-map $T_P\colon A \mapsto P'A$ on the $\Z_p$-module $M:=\Z_p[x]/(P)$.  Let $\Q_p$ be the field of $p$-adic fractions. We note that $\disc(P)\ne 0$ also over $\Q_p$, as the embedding $\Q\to\Q_p$ is one-to-one. If $\alpha_1,\ldots,\alpha_n$ are the roots of $P$ in an algebraic closure $\overline{\Q_p}$, they are distinct since $\disc(P)\neq 0$, so the map $A \mapsto (A(\alpha_1),\ldots,A(\alpha_n))$ realizes an isomorphism between $M\otimes \overline{\Q_p}$ and $\overline{\Q_p}^n$ (here $\otimes$ is as $\Z_p$-modules). Under this isomorphism $T_P$ becomes diagonal with eigenvalues $P'(\alpha_1),\ldots,P'(\alpha_n)$. In particular, $\det(T_P)=\prod_i P'(\alpha_i)=(-1)^{n(n-1)/2}\disc(P)$. So $v_p(\disc(P))=v_p(\det(T_P))$. On the other hand, $\Z_p[x]/(P,P')\cong M/\mathrm{Im}(T_P)$.
By Smith normal form over $\Z_p$, there are bases of the domain and codomain in which $T_P$ is diagonal with entries
$p^{a_1},\ldots,p^{a_n}$, where $a_i\ge 0$. Thus
\[
|M/\mathrm{Im}(T_P)| = \prod_{i=1}^n|\Z_p/p^{a_i}\Z_p| = p^{\sum_{i=1}^n a_i} = p^{v_p(\det(T_P))},
\]
hence the result follows.
\end{proof}

\begin{proof}[Proof of \Pref{prop:p^2-divides-disc}]
From the previous lemma, it is clear that
\[
v_p(\disc(P)) \ge \deg\gcd(\overline{P},\overline{P}').
\]
Consequently, if there is another repeated irreducible factor $Q\ne x-a$ of $\overline{P}$, we get $v_p(\disc(P))\ge 1+\deg Q$. This proves the second part of (\ref{item:p^2-ii}). Also, if (\ref{item:p^2-i}) or (\ref{item:p^2-ii}) holds, then $v_p(\disc(P))\ge 2.$

Suppose then that neither (\ref{item:p^2-i}) nor (\ref{item:p^2-ii}) holds. We need to show that (\ref{item:p^2-iii}) holds if and only if $v_p(\disc(P))\ge 2.$ Choose a lift $a_0\in\Z_p$ of $a$ in the $p$-adic integers $\Z_p$. By Hensel's lemma, we may write $P=P_1P_2$ over $\Z_p$, where $P_1(x+a_0)=x^2+Bx+C$ (for $B,C\in p\Z_p$), such that $P_2(a_0)$, $\disc(P_2)$, and $\Res(P_1,P_2)$, are units in $\Z_p$. Hence
\[
v_p(\disc(P))=v_p(\disc(P_1))=v_p(B^2-4C).
\]
Since $p$ is odd and $B^2\in p^2\Z_p$, the assumption $p^2\mid\disc(P)$ is equivalent to $C\in p^2\Z_p$, i.e. $P(a_0)\equiv 0\pmod{p^2}$. By \Lref{lem:double-root-lift} this happens if and only if (\ref{item:p^2-iii}) holds.
\end{proof}

We are ready to prove the main theorem.

\begin{proof}[Proof of \Tref{thm:main}]
The results of \cite{FeldheimSen17} show that $\Pr(\disc(R)=0)\ll 1/n$ under the assumption that $\mu(i)<\frac12$ for all $i\in\Z$ (see also the recent \cite{MichelenYakir26} for non-quantitative results without any restriction on $\mu$). This condition is much weaker than ours, as $\mu(i)\ge 1/2$ implies $H_2(\mu)\le \log 4$. Thus it is enough to show that $\Pr(\disc(R)\textrm{ is a nonzero square})\le C/\log n$.

Let $\PP_0$ denote the set of exceptional primes of \Cref{cor:no-inadmissible-double-roots}, which has size at most $Cn^C$ for some $C=C(\mu)>0$. Let $\PP_1$ denote the set of all primes which lie between~$n^{B_0}$ and $\exp\left(\frac{\sqrt{n}}{\log^7 n}\right)$, and set $\PP=\PP_1\setminus\PP_0$. Then $\PP$ satisfies \ref{item:P1}. We further set $\ksm=\floor{\log^4 n}$, so that
\[
p^{\ksm} \le \exp\left(\frac{\sqrt{n}}{\log^3 n}\right)
\]
for every $p\in\PP$.

By Mertens' theorem,
\[
S(\PP_1) = \sum_{p\in\PP_1}\frac{1}{p} = \log\log\exp\left(\frac{\sqrt{n}}{\log^7 n}\right) - \log\log(n^{B_0}) + o(1) = \left(\frac{1}{2}+o(1)\right)\log n.
\]
On the other hand,
\begin{align*}
  S(\PP_0\cap\PP_1) &= \sum_{p\in\PP_0\cap\PP_1}\frac{1}{p} \le \sum_{n^{B_0}\le p\le n^{B_0+C+1}}\frac{1}{p} + \sum_{\substack{p\in\PP_0 \\ p>n^{B_0+C+1}}}\frac{1}{p} \\
  &\le O_{\mu}(1) + \frac{Cn^C}{n^{B_0+C+1}} = O_{\mu}(1).
\end{align*}
Therefore $S(\PP) = \left(\frac{1}{2}+o(1)\right)\log n$.

We choose $\eps>0$ such that $\eps<\frac{1}{2}-\frac{6}{H_2(\mu)}$ and write $m=\floor{\frac{n-1}{3}}$. Combining previous results in the paper, we see that with probability at least $1-\frac{C'}{\log n}$ (where $C'$ depends only on $\mu,\eps$), either $\disc(R)=0$ or the following hold:
\begin{enumerate}
  \item\label{item:proof1} There are at least $\left(\frac{1}{2}-\eps\right)\log n$ primes in $\PP$ such that there is a unique admissible double root of $R$ in $\FF_p$ (\Pref{prop:num-unique-double} with $\delta=\frac{\eps}{3}$);
  \item\label{item:proof2} There are no admissible triple roots of $R$ in $\FF_p$ for any $p\in\PP$ (\Pref{prop:no-triple-roots});
  \item \label{item:proof3} No admissible double root of $R$ lifts to a double root modulo $p^2$ for any $p\in\PP$ (\Pref{prop:no-double-root-lifts});
  \item\label{item:proof4} There are no pairs of admissible double roots $(a,\alpha)\in\AA_p\times\AA_{p^k}$ of $R$ for any $p\in\PP$ and $2\le k\le\log^4 n$ (\Pref{prop:no-admissible-double-roots-ext});
  \item\label{item:proof5} There are no pairs of double roots $(a,\alpha)\in\AA_p\times\BB_{p^k}$ of $R$ for any $p\in\PP$ and $1\le k\le\log^4 n$, even for indamissible $\alpha$ (\Cref{cor:no-inadmissible-double-roots});
  \item\label{item:proof6} There are no pairs of double roots $(a,\alpha)\in\AA_p\times\FFF_{p^k}$ of $R$ for any $p\in\PP$ and any $k>\log^4 n$ with $p^{k+2}\le\norm{\mu}_2^{-2m}$, whether $\alpha$ is admissible or not (\Pref{prop:no-double-roots-big-ext}).
\end{enumerate}

Suppose that $\disc(R)$ is a nonzero square, and let $\PP(R)$ denote a set of precisely $\floor{\left(\frac{1}{2}-\eps\right)\log n}$ primes satisfying (\ref{item:proof1}). Fix a prime $p\in\PP(R)$. By (\ref{item:proof1}), $R$ has a double root $a\in\FF_p^{\times}$, so $p$ must divide $\disc(R)$. By our assumption, $\disc(R)$ is a square, so $p^2\mid\disc(R)$. Since $B_0\ge 3$ we have $p>n$, we may use \Pref{prop:p^2-divides-disc} to deduce that one of the conclusions of \Pref{prop:p^2-divides-disc} holds. The first conclusion is excluded by (\ref{item:proof2}), and the third by (\ref{item:proof3}). Therefore there is some irreducible $P\in\FF_p[x]$ such that $P\ne x-a$ and $P^2\mid R$ in $\FF_p[x]$.

We note that $P\ne x$. Indeed, we may assume that  $n$ is sufficiently large so that $p>H$ for all $p\in\PP$. Then $x^2\mid R$ in $\FF_p[x]$ forces $\omega_0=\omega_1=0$ as integers. But then $x^2\mid R$ in $\Z[x]$, which is impossible since we assumed $\disc(R)\ne 0$.

Suppose next that $\deg P=1$, i.e., $P=x-b$ for some $b\in\FF_p$ with $b\ne 0,a$. If $b$ is admissible, then $a,b$ are two admissible double roots of $R$, contradicting (\ref{item:proof1}); if $b$ is inadmissible, we get a contradiction from (\ref{item:proof5}). This shows that $\deg P\ge 2$.

Let $k=\deg P$, and let $\alpha$ be a root of $P$. Then $\alpha$ is a double root of $R$ in $\FFF_{p^k}$. By (\ref{item:proof4}) and (\ref{item:proof5}), we must have $k > \log^4 n$. But then by (\ref{item:proof6}) we have
\[
p^{k+2} > \norm{\mu}_2^{-2m} = \exp(H_2(\mu)m),
\]
so
\[
v_p(\disc(R))\ge 1+k \ge \floor{\frac{H_2(\mu)m}{\log p}}.
\]

We can now conclude the proof. We define the (random) integer
\[
N = \prod_{p\in\PP(R)} p^{\floor{\frac{H_2(\mu)m}{\log p}}} \ge \prod_{p\in\PP(R)} p^{\frac{H_2(\mu)m}{\log p} - 1} = \frac{\exp\left(H_2(\mu)m\left|\PP(R)\right|\right)}{\prod_{p\in\PP(R)}p}.
\]
By the above, $N\mid\disc(R)$, and
\begin{align*}
  \log N &\ge H_2(\mu)m\left|\PP(R)\right| - \sum_{p\in\PP(R)}\log p \\
  &\ge H_2(\mu)\floor{\frac{n-1}{3}}\floor{\left(\frac{1}{2}-\eps\right)\log n} - \left(\frac{1}{2}-\eps\right)\frac{\sqrt{n}\log n}{\log^7 n} \\
  &= \left(\frac{1}{6}-\frac{\eps}{3}+o(1)\right)H_2(\mu)n\log n.
\end{align*}
On the other hand, since $H(R)\le H$, we have
\[
\left|\disc(R)\right| \le (Hn)^{2n} = \exp(2n\log n + 2n\log H),
\]
which is a contradiction when $n$ is sufficiently large since $\left(\frac{1}{6}-\frac{\eps}{3}\right)H_2(\mu)>2$.
\end{proof}

\bibliographystyle{plain}
\bibliography{refs}

\begin{thebibliography}{10}

\bibitem{BarySorokerGoldgraber25}
Lior Bary-Soroker and Noam Goldgraber.
\newblock Full {G}alois groups of polynomials with slowly growing coefficients.
\newblock {\em Bull. Lond. Math. Soc.}, 57(3):941--955, 2025.

\bibitem{BarySorokerKoukoulopoulosKozma23}
Lior Bary-Soroker, Dimitris Koukoulopoulos, and Gady Kozma.
\newblock Irreducibility of random polynomials: general measures.
\newblock {\em Invent. Math.}, 233(3):1041--1120, 2023.

\bibitem{BarySorokerKozma20}
Lior Bary-Soroker and Gady Kozma.
\newblock Irreducible polynomials of bounded height.
\newblock {\em Duke Math. J.}, 169(4):579--598, 2020.

\bibitem{Bhargava25}
Manjul Bhargava.
\newblock Galois groups of random integer polynomials and van der {W}aerden's
  conjecture.
\newblock {\em Ann. of Math. (2)}, 201(2):339--377, 2025.

\bibitem{BlochPolya31}
A.~Bloch and G.~P\'olya.
\newblock On the {R}oots of {C}ertain {A}lgebraic {E}quations.
\newblock {\em Proc. London Math. Soc. (2)}, 33(2):102--114, 1931.

\bibitem{BreuillardVarju19}
Emmanuel Breuillard and P\'eter~P. Varj\'u.
\newblock Irreducibility of random polynomials of large degree.
\newblock {\em Acta Math.}, 223(2):195--249, 2019.

\bibitem{BreuillardVarju22}
Emmanuel Breuillard and P\'eter~P. Varj\'u.
\newblock Cut-off phenomenon for the {$ax+b$} {M}arkov chain over a finite
  field.
\newblock {\em Probab. Theory Related Fields}, 184(1-2):85--113, 2022.

\bibitem{Chela63}
R.~Chela.
\newblock Reducible polynomials.
\newblock {\em J. London Math. Soc.}, 38:183--188, 1963.

\bibitem{ConwayJones76}
J.~H. Conway and A.~J. Jones.
\newblock Trigonometric {D}iophantine equations ({O}n vanishing sums of roots
  of unity).
\newblock {\em Acta Arith.}, 30(3):229--240, 1976.

\bibitem{Dietmann13}
Rainer Dietmann.
\newblock Probabilistic {G}alois theory.
\newblock {\em Bull. Lond. Math. Soc.}, 45(3):453--462, 2013.

\bibitem{Dobrowolski79}
E.~Dobrowolski.
\newblock On a question of {L}ehmer and the number of irreducible factors of a
  polynomial.
\newblock {\em Acta Arith.}, 34(4):391--401, 1979.

\bibitem{FeldheimSen17}
Ohad~N. Feldheim and Arnab Sen.
\newblock Double roots of random polynomials with integer coefficients.
\newblock {\em Electron. J. Probab.}, 22:Paper No. 10, 23, 2017.

\bibitem{Gallagher73}
P.~X. Gallagher.
\newblock The large sieve and probabilistic {G}alois theory.
\newblock In {\em Analytic number theory ({P}roc. {S}ympos. {P}ure {M}ath.,
  {V}ol. {XXIV}, {S}t. {L}ouis {U}niv., {S}t. {L}ouis, {M}o., 1972)}, volume
  Vol. XXIV of {\em Proc. Sympos. Pure Math.}, pages 91--101. Amer. Math. Soc.,
  Providence, RI, 1973.

\bibitem{Hilbert1892}
David Hilbert.
\newblock Ueber die {I}rreducibilit\"at ganzer rationaler {F}unctionen mit
  ganzzahligen {C}oefficienten.
\newblock {\em J. Reine Angew. Math.}, 110:104--129, 1892.

\bibitem{Hokken26}
David Hokken.
\newblock Counting (skew-) reciprocal littlewood polynomials with square
  discriminant.
\newblock {\em Israel Journal of Mathematics}, pages 1--33, 2026.

\bibitem{HokkenKoukoulopoulos25}
David Hokken and Dimitris Koukoulopoulos.
\newblock {Irreducibility and Galois groups of random reciprocal polynomials of
  large degree}.
\newblock {\em arXiv preprint arXiv:2510.18857}, 2025.

\bibitem{Kac43}
M.~Kac.
\newblock On the average number of real roots of a random algebraic equation.
\newblock {\em Bull. Amer. Math. Soc.}, 49:314--320, 1943.

\bibitem{KoenigNguyenPan24}
Jake Koenig, Hoi~H. Nguyen, and Amanda Pan.
\newblock A note on inverse results of random walks in abelian groups.
\newblock {\em Comb. Number Theory}, 13(1):67--92, 2024.

\bibitem{Konyagin92}
S.~V. Konyagin.
\newblock Estimates for {G}aussian sums and {W}aring's problem modulo a prime.
\newblock {\em Trudy Mat. Inst. Steklov.}, 198:111--124, 1992.

\bibitem{Konyagin99}
S.~V. Konyagin.
\newblock On the number of irreducible polynomials with {$0,1$} coefficients.
\newblock {\em Acta Arith.}, 88(4):333--350, 1999.

\bibitem{Mahler64}
K.~Mahler.
\newblock An inequality for the discriminant of a polynomial.
\newblock {\em Michigan Math. J.}, 11:257--262, 1964.

\bibitem{Mann65}
Henry~B. Mann.
\newblock On linear relations between roots of unity.
\newblock {\em Mathematika}, 12:107--117, 1965.

\bibitem{MichelenYakir25}
Marcus Michelen and Oren Yakir.
\newblock Law of large numbers for the discriminant of random polynomials.
\newblock {\em arXiv preprint arXiv:2506.12206}, 2025.

\bibitem{MichelenYakir26}
Marcus Michelen and Oren Yakir.
\newblock Limit law for root separation in random polynomials.
\newblock {\em Adv. Math.}, 496:Paper No. 110990, 98, 2026.

\bibitem{OdlyzkoPoonen93}
A.~M. Odlyzko and B.~Poonen.
\newblock Zeros of polynomials with {$0,1$} coefficients.
\newblock {\em Enseign. Math. (2)}, 39(3-4):317--348, 1993.

\bibitem{Raghavan25}
Rushil Raghavan.
\newblock Improved bounds for the freiman-ruzsa theorem.
\newblock {\em arXiv preprint arXiv:2512.11217}, 2025.

\bibitem{Sanders13}
Tom Sanders.
\newblock The structure theory of set addition revisited.
\newblock {\em Bull. Amer. Math. Soc. (N.S.)}, 50(1):93--127, 2013.

\bibitem{vanderWaerden36}
B.~L. van~der Waerden.
\newblock Die {S}eltenheit der reduziblen {G}leichungen und der {G}leichungen
  mit {A}ffekt.
\newblock {\em Monatsh. Math. Phys.}, 43(1):133--147, 1936.

\bibitem{Varju12}
P\'eter~P. Varj\'u.
\newblock Expansion in {$SL_d(\mathscr O_K/I)$}, {$I$} square-free.
\newblock {\em J. Eur. Math. Soc. (JEMS)}, 14(1):273--305, 2012.

\end{thebibliography}

\end{document}